\theoremstyle{plain}
\newtheorem{theorem}{Theorem}[section]
\newtheorem{lemma}[theorem]{Lemma}
\newtheorem{proposition}[theorem]{Proposition}
\newtheorem{corollary}[theorem]{Corollary}
\theoremstyle{definition}
\newtheorem{definition}[theorem]{Definition}
\newtheorem{remark}[theorem]{Remark}
\newtheorem*{remark*}{Remark}
\newtheorem{example}[theorem]{Example}
\newcommand*{\ie}{{\it i.e.}}
\newcommand*{\eg}{{\it e.g.}}
\newcommand*{\Linf}{\ell^\infty}
\newcommand*{\Linfs}{{\scriptscriptstyle{(\infty)}}}
\newcommand*{\Ldr}{{\mathrm{dR}\Linfs}}
\newcommand*{\Lcoh}{$\Linf$\nobreakdash-\hspace{0pt}cohomology}
\newcommand*{\Ldrc}{$\Linf$\nobreakdash-\hspace{0pt}de Rham cohomology}
\newcommand*{\R}{\mathbb{R}} 
\newcommand*{\Z}{\mathbb{Z}} 
\newcommand*{\N}{\mathbb{N}} 
\newcommand*{\abs}[1]{\left\lvert#1\right\rvert} 
\newcommand*{\norm}[1]{\left\lVert#1\right\rVert} 
\DeclareMathOperator{\Supp}{supp} 
\newcommand*{\Id}{\mathrm{id}} 
\newcommand*{\Ev}{\mathrm{ev}} 
\newcommand*{\Ph}{\makebox[1ex]{\textbf{$\cdot$}}} 
\newcommand*{\Simplex}[1]{{\Delta^{#1}}}
\newcommand*{\Ucover}[1]{\widetilde{#1}} 
\newcommand*{\CW}{CW-complex} 
\newcommand*{\CWs}{CW-complexes} 
\newcommand*{\Bd}[1][{}]{\partial{#1}} 
\newcommand*{\Cbd}[1][{}]{\delta{#1}} 
\newcommand*{\Sm}{\mathrm{sm}} 
\title{\texorpdfstring{$\ell^\infty$-cohomology}{L-infinity cohomology}: amenability, relative hyperbolicity, isoperimetric inequalities and undecidability}
\author{Francesco Milizia\\[-1pt]
	\footnotesize{\textit{Scuola Normale Superiore, Palazzo della Carovana,}}\\[-3pt]
	\footnotesize{\textit{Piazza dei Cavalieri, 7, 56126 Pisa, IT}}\\[-1pt]
	\footnotesize{\texttt{francesco.milizia@sns.it}}}
\date{}
\begin{document}
    \maketitle
    
    \vspace{-20pt}
    \begin{abstract}
        We revisit Gersten's $\ell^\infty$-cohomology of groups and spaces, removing the finiteness assumptions required by the original definition while retaining its geometric nature.
        Mirroring the corresponding results in bounded cohomology, we provide a characterization of amenable groups using $\ell^\infty$-cohomology, and generalize Mineyev's characterization of hyperbolic groups via $\ell^\infty$-cohomology to the relative setting.
        We then describe how $\ell^\infty$-cohomology is related to isoperimetric inequalities.
        We also consider some algorithmic problems concerning $\ell^\infty$-cohomology and show that they are undecidable.
        In an appendix, we prove a version of the de Rham's theorem in the context of $\ell^\infty$-cohomology.
    \end{abstract}

    \section{Introduction}
    
    Gersten introduced \Lcoh{} as a tool to obtain lower bounds for the Dehn function of finitely presented groups \cite{Gersten1992}.
    His approach has a geometric-topological flavor, and starts with the definition of \Lcoh{} for \CWs{}.
    If $X$ is a connected \CW{} with a finite $m$-skeleton, then Gersten defines the \Lcoh{} of $X$, only in degrees $\le m$, as the cohomology of the complex of bounded cellular cochains on the universal cover of $X$.
    Here, the coefficients are given by a normed real vector space $V$, usually the real numbers with the absolute-value norm, and a cochain is bounded if there is a bound on the norm of the values it assigns to the cells.
    The finiteness of the $m$-skeleton of $X$ guarantees that the usual coboundary map sends bounded cochains of degree $<m$ to bounded cochains.
    Then, Gersten defines the \Lcoh{} of a group $G$ as the \Lcoh{} of any cellular $K(G,1)$; it is defined in degree $m$ if there is a cellular $K(G,1)$ with a finite $m$-skeleton, \ie, if $G$ is of type $F_m$.
    For example, it is defined in degree $2$ if $G$ is finitely presentable.
    
    Our terminology differs slightly from the one used by Gersten and other authors: the \Lcoh{} of a \CW{} $X$, defined as above, should be called, according to Gersten, the \Lcoh{} (or \emph{bounded valued cohomology}) \emph{of the universal cover} of $X$.
    We denote it by $H_\Linfs^\bullet(X;V)$.
    For the \Lcoh{} of a group $G$, we use the same notation with $G$ in place of $X$.
    
    Wienhard \cite[Proposition 5.2]{Wienhard2012} and Blank \cite[Theorem 6.3.5]{Blank2015} observed that $H_\Linfs^\bullet(G;V)$ is canonically isomorphic to the (ordinary) cohomology of $G$ with coefficients in the $\R[G]$-module
    \[ \Linf(G,V) = \{ f:G \to V \text{ with bounded image}\}. \]
    The latter is always defined in every degree.
    We take it as our definition of \Lcoh{}, thus removing the finiteness condition appearing in Gersten's construction.
    
    More generally, if $X$ is a connected space with fundamental group $G$, we define the \Lcoh{} of $X$ as the cohomology of $X$ with coefficients in $\Linf(G,V)$.
    This definition has a simple description in terms of $V$-valued cochains, in a spirit that is closer to Gersten's construction.
    Let $X$ be a connected \CW{} with fundamental group $G$ and universal cover $\Ucover{X}$, and let $V$ be a normed vector space.
    Instead of considering uniformly bounded $V$-valued cochains on $\Ucover{X}$, we consider cochains bounded on every $G$-orbit of cells, where $G$ acts on $\Ucover{X}$ via covering transformations.
    When $X$ has a finite $m$-skeleton, for every $k \le m$ there is a finite number of orbits of $k$-dimensional cells; therefore, a $k$-cochain is uniformly bounded if and only if it is bounded on orbits, and Gersten's definition is recovered.
    Quite surprisingly, the description via cochains bounded on orbits, which is more carefully explained in \Cref{sec:definition} and is the leitmotif of this exposition, also works with singular cochains in place of cellular ones; this is not true if one imposes uniform boundedness, as in the original definition by Gersten.
    
    There is a natural \emph{comparison map} $\iota^\bullet:H^\bullet(G;V) \to H^\bullet_\Linfs(G;V)$ from the ordinary to the \Lcoh{} of a group, whose definition is recalled in \Cref{sec:comparison}.
    When $G$ is amenable and $V$ is a dual normed $\R[G]$-module, $\iota^k$ is injective for every $k \in \N$.
    This is one of the first facts that Gersten proved, at least for trivial real coefficients and for degrees $k$ such that $G$ is of type $F_k$, as soon as he introduced \Lcoh{} \cite[Theorem 10.13]{Gersten1992}.
    Wienhard then removed the finiteness condition \cite[Proposition 5.3]{Wienhard2012} as soon as she introduced the more general definition of \Lcoh{} of groups (see also \cite[Remark 5.3]{Frigerio2020} for a different proof).
    In \Cref{sec:amenability} we prove a converse of this result, obtaining the following characterization of amenability:
    
    \begin{theorem}
        \label{thm:intro_linf_characterizes_amenable}
        Let $G$ be a group.
        Then $G$ is amenable if and only if the comparison map $\iota^1:H^1(G;V) \to H^1_\Linfs(G;V)$ is injective for every dual normed $\R[G]$-module $V$.
        Moreover, if $G$ is amenable, then $\iota^k:H^k(G;V) \to H^k_\Linfs(G;V)$ is injective for every $k \in \N$ and every $V$ as above.
    \end{theorem}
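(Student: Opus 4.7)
The forward direction (amenable $G$ implies $\iota^k$ injective for every $k$ and every dual normed $\R[G]$-module $V$) is the theorem of Wienhard quoted above. Its proof uses an invariant mean $m$ on $G$ to build a $G$-equivariant retraction $r \colon \Linf(G, V) \to V$: if $V = W^*$, one sets $\langle r(F), w \rangle = m\bigl(g \mapsto \langle F(g), w \rangle\bigr)$, and this splits the inclusion of constants $V \hookrightarrow \Linf(G, V)$, so $\iota^\bullet$ is a split monomorphism already at the module level and hence injective in every degree.

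For the converse in degree $1$, the heart of the proof---and the main technical step---is to compute $\ker \iota^1$ explicitly. A $1$-cocycle with values in $V$ is a crossed homomorphism $f \colon G \to V$, and $\iota^1[f] = 0$ exactly when there exists $F \in \Linf(G, V)$ with $(g \cdot F)(h) - F(h) = f(g)$ for all $g, h \in G$. Specializing to $h = g$ forces $F(g) = g \cdot v_0 - f(g)$ with $v_0 = F(e)$, so $F$ has bounded image if and only if $g \mapsto f(g) - g \cdot v_0$ is a bounded function. Subtracting the principal cocycle $\partial v_0 \colon g \mapsto g \cdot v_0 - v_0$ then exhibits $f - \partial v_0$ as a bounded crossed homomorphism cohomologous to $f$; conversely, any bounded crossed homomorphism $f$ gives $\iota^1[f] = 0$ via the explicit primitive $F(g) = -f(g)$. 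This shows that $\ker \iota^1$ is precisely the image of the comparison map $H^1_b(G; V) \to H^1(G; V)$ from bounded to ordinary group cohomology. The argument requires careful bookkeeping of the action $(g \cdot F)(h) = g \cdot F(g^{-1} h)$ on $\Linf(G, V)$ and verification that $f - \partial v_0$ remains a crossed homomorphism, but no deeper idea.

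With this identification in hand, the converse is a direct reformulation of Johnson's classical theorem: a discrete group $G$ is amenable if and only if every bounded derivation (equivalently, bounded crossed homomorphism) into a dual normed $\R[G]$-module is inner, i.e., the comparison map $H^1_b(G; V) \to H^1(G; V)$ has trivial image for every such $V$. Combined with the previous paragraph, this immediately yields the stated characterization: $\iota^1$ is injective for every dual normed $\R[G]$-module $V$ if and only if $G$ is amenable.
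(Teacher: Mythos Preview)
Your proposal is correct, and the forward direction (amenable $\Rightarrow$ $\iota^k$ injective) is identical to the paper's argument: both build a $G$-equivariant retraction $\Linf(G,V)\to V$ out of an invariant mean (the paper's Lemma~4.1 is exactly your $r$).

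For the converse, however, you take a genuinely different route. The paper works with a single explicit coefficient module $V=(\Linf(G,\R)/\R)'$ and the Johnson cocycle $J\in C^1(G;V)$: it exhibits a globally bounded $0$-cochain $\alpha$ with $\Cbd\alpha=J$, so $\iota^1[J]=0$; injectivity of $\iota^1$ then forces $[J]=0$, and a $G$-equivariant primitive of $J$ is converted into a left-invariant mean by hand. Your argument instead establishes the general identity $\ker\iota^1=\operatorname{im}\bigl(c^1:H_b^1(G;V)\to H^1(G;V)\bigr)$ for \emph{every} normed $\R[G]$-module $V$, and then invokes Johnson's theorem as a black box. The paper actually acknowledges your approach in the Remark following its Theorem~4.3, noting that $\operatorname{im}c^1\subseteq\ker\iota^1$ (its Corollary~3.3) together with the injectivity of $c^1$ in degree~$1$ already suffices. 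What your computation buys in addition is the reverse inclusion $\ker\iota^1\subseteq\operatorname{im}c^1$, i.e.\ exactness of the sequence $H_b^1\to H^1\to H_\Linfs^1$ in degree~$1$; this is of independent interest given that exactness in degree~$2$ is the open Question~1.8 of the paper. The trade-off is that the paper's proof is self-contained (it rebuilds the relevant piece of Johnson's theorem), whereas yours is shorter but relies on Johnson's characterization as an external input.
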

    
    Another prominent class of groups in geometric group theory is that of hyperbolic groups.
    A series of papers by Gersten \cite{Gersten1996a, Gersten1996b, Gersten1998} and Mineyev \cite{Mineyev1999, Mineyev2000}
    led to the following characterization of hyperbolicity:
    
    \begin{theorem}[\cite{Mineyev2000}]
        \label{thm:intro_linf_characterizes_hyperbolic}
        Let $G$ be a finitely presented group.
        Then $G$ is hyperbolic if and only if $H_\Linfs^2(G;V) = 0$ for every normed vector space $V$.
        Moreover, if $G$ is hyperbolic, then $H_\Linfs^k(G;V) = 0$ for every $k \ge 2$ and every normed vector space $V$.
    \end{theorem}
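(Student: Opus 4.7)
The plan is to prove the two directions separately, using different cellular models adapted to each.

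For the forward direction, assume $G$ is hyperbolic. For $d$ sufficiently large, the Rips complex $P_d(G)$ is a contractible, finite-dimensional simplicial complex on which $G$ acts freely and cocompactly; hence it is a cellular model of $EG$ with only finitely many $G$-orbits of cells in each degree. Consequently, $H^\bullet_\Linfs(G;V)$ is the cohomology of the complex of cellular cochains on $P_d(G)$ with values in $V$ that are bounded on every $G$-orbit of cells. The crucial input is Mineyev's bounded equivariant homological bicombing from \cite{Mineyev2000}: a $G$-equivariant chain contraction $h_\bullet$ of the augmented cellular chain complex of $P_d(G)$ which sends each cell to a chain supported on a uniformly bounded number of $G$-orbits. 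Dualizing $h_\bullet$ yields a $G$-equivariant contracting cochain homotopy in every degree $\ge 1$ that preserves boundedness on orbits, and hence $H^k_\Linfs(G;V) = 0$ for all $k \ge 2$.

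For the converse, I would rely on the relationship between \Lcoh{} in degree $2$ and the Dehn function of $G$ developed in a later section of this paper. Fix a finite presentation of $G$ with presentation $2$-complex $X$; since $\Ucover{X}$ has finitely many $G$-orbits of cells in degrees $\le 2$, cochains bounded on orbits coincide up to degree $2$ with uniformly bounded cochains, and therefore compute $H^k_\Linfs(G;V)$ for $k \le 2$. For a judiciously chosen coefficient space $V$, built from bounded real-valued functions on the $1$-cells of $\Ucover{X}$, a superlinear Dehn function of $G$ produces an explicit non-vanishing class in $H^2_\Linfs(G;V)$. The hypothesis $H^2_\Linfs(G;V) = 0$ for every normed $V$ therefore forces a linear isoperimetric inequality for $\Ucover{X}$, and by Gromov's theorem this is equivalent to hyperbolicity of $G$.

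The principal obstacle is the construction of Mineyev's bicombing: producing an equivariant chain contraction whose orbit-support is uniformly bounded is a delicate geometric argument that exploits the thinness of geodesic triangles to consistently ``flow'' chains toward a basepoint along ideal geodesics. This is the technical heart of \cite{Mineyev2000} and the reason why the forward direction extends beyond degree $2$. The converse implication is, by contrast, essentially formal once the isoperimetric interpretation of $H^2_\Linfs$ is in place, in the spirit of Gersten's original observation that vanishing of $H^2_\Linfs$ for a finitely presented group forces a linear Dehn function.
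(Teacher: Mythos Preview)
The paper does not give a stand-alone proof of this theorem; it is quoted from \cite{Mineyev2000}.  It does, however, prove the relative generalization (\Cref{thm:relative_hyperbolicity}), whose specialization to $\mathcal{H}=\emptyset$ recovers the absolute statement, and it is against that argument that your proposal should be compared.

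For the forward direction the paper takes a genuinely different route from yours.  Rather than working directly with a bicombing on a Rips complex, it passes through bounded cohomology: hyperbolicity gives surjectivity of $c^k:H^k_b(G;W)\to H^k(G;W)$ for every normed $\R[G]$-module $W$ (this is \cite{Mineyev2002}, or \cite{Fra2018} relatively), and taking $W=\Linf(G,V)$ together with $H^k_b(G;\Linf(G,V))=0$ (\Cref{prop:bounded_and_linf}) forces $H^k_\Linfs(G;V)=0$.  Your direct approach is closer in spirit to Mineyev's original paper and avoids invoking the bounded-cohomology characterization; the paper's detour, on the other hand, makes the parallel with \Cref{thm:bounded_characterizes_hyperbolic} explicit and carries over verbatim to the relative case.

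Your forward direction contains two inaccuracies worth flagging.  First, the action of $G$ on the Rips complex $P_d(G)$ is free only when $G$ is torsion-free; in general finite subgroups fix simplices, so $P_d(G)$ is not a model for $EG$ and one must either pass to a Borel construction or exploit that the coefficients are over $\R$.  Second, and more substantially, a $G$-equivariant contracting chain homotopy of the \emph{augmented} cellular complex cannot exist when $G$ is infinite: it would in particular produce a $G$-invariant $0$-chain of augmentation $1$, and there is none.  What Mineyev actually supplies is a (non-equivariant) filling of cycles by chains of uniformly bounded $\ell^1$-norm in each degree $\ge 1$---i.e.\ linear higher isoperimetric inequalities---and it is this boundedness, not equivariance, that dualizes to give bounded primitives for bounded cocycles.

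For the converse your sketch is essentially the paper's argument in different words: the ``judiciously chosen'' coefficient space is $\Linf(\N,\R)$, vanishing for that $V$ is the same as strong vanishing of $H^2_\Linfs(G;\R)$ (\Cref{prop:strong_vanishing}), and strong vanishing plus Hahn--Banach yields the linear isoperimetric inequality on the Cayley $2$-complex, hence hyperbolicity.
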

    
    In \Cref{sec:relative_hyperbolicity} we consider group pairs $(G,\mathcal{H})$, where $G$ is a group and $\mathcal{H}$ is a collection of subgroups of $G$.
    We define $H_\Linfs^\bullet(G,\mathcal{H};V)$ using the classical relative cohomology theory of Bieri and Eckman \cite{BE1978} with coefficients in $\Linf(G,V)$.
    When the pair $(G,\mathcal{H})$ satisfies a certain finiteness condition, this coincides with the definition given by Gautero and Heusener \cite{GH2009}.
    We obtain the following generalization of \Cref{thm:intro_linf_characterizes_hyperbolic}:
    
    \begin{theorem}
        \label{thm:intro_relative_hyperbolicity}
        Let $(G,\mathcal{H})$ be a group pair admitting a finite relative presentation, with $G$ finitely generated.
        Then the following conditions are equivalent:
        \begin{enumerate}[label=(\roman*)]
            \item \label{it:intro_rel_hyp} The pair $(G,\mathcal{H})$ is relatively hyperbolic;
            \item \label{it:intro_linf_null} $H_\Linfs^2(G,\mathcal{H};V) = 0$ for every normed vector space $V$;
            \item \label{it:intro_strong_null} $H_\Linfs^2(G,\mathcal{H};\R)$ vanishes strongly.
        \end{enumerate}
        Moreover, if $(G,\mathcal{H})$ is relatively hyperbolic, then $H_\Linfs^k(G,\mathcal{H};V) = 0$ for every normed vector space $V$ and every $k \ge 2$, and $H_\Linfs^k(G,\mathcal{H};\R)$ vanishes strongly for every $k \ge 2$.
    \end{theorem}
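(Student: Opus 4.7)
The plan is to follow the strategy of Mineyev's proof of \Cref{thm:intro_linf_characterizes_hyperbolic}, adapted to the relative setting by working with a standard model for the pair $(G,\mathcal{H})$, such as the coned-off Cayley 2-complex or the Groves--Manning cusped space. The unifying thread is the known equivalence between relative hyperbolicity and a linear relative (homological) isoperimetric inequality, which the earlier section of the paper on isoperimetric inequalities should translate into strong vanishing of $H^2_\Linfs(G,\mathcal{H};\R)$. I would prove the cycle \ref{it:intro_rel_hyp} $\Rightarrow$ \ref{it:intro_strong_null} $\Rightarrow$ \ref{it:intro_linf_null} $\Rightarrow$ \ref{it:intro_rel_hyp}, and handle the higher-degree vanishing at the end.

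\textbf{\ref{it:intro_rel_hyp} $\Rightarrow$ \ref{it:intro_strong_null}.} Assume $(G,\mathcal{H})$ is relatively hyperbolic. Then any finite relative presentation satisfies a linear relative homological isoperimetric inequality on the coned-off 2-complex. By the bridge theorem of the isoperimetry section, this inequality is equivalent to the existence of a uniformly bounded right inverse to the degree-$1$ coboundary map on $\ell^\infty$-cochains of the relative model, which is precisely the strong vanishing of $H^2_\Linfs(G,\mathcal{H};\R)$.

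\textbf{\ref{it:intro_strong_null} $\Rightarrow$ \ref{it:intro_linf_null}.} A bounded right inverse $s\colon Z^2_\Linfs(G,\mathcal{H};\R)\to C^1_\Linfs(G,\mathcal{H};\R)$ extends, by postcomposition with coefficient evaluations, to a bounded right inverse for any normed vector space $V$ in place of $\R$. This step is essentially formal once the correct notion of ``vanishes strongly'' is in place. It gives the required vanishing for arbitrary $V$.

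\textbf{\ref{it:intro_linf_null} $\Rightarrow$ \ref{it:intro_rel_hyp}.} This is the direction with real new content. The idea is to apply the vanishing hypothesis to a universal coefficient module, namely $V = Z^2_\Linfs(G,\mathcal{H};\R)$ equipped with the sup norm (or a suitable subspace of it), and use a closed-graph/open-mapping argument in the style of Matsumoto--Morita's upgrade of vanishing bounded cohomology to uniform boundedness of the primitive. The resulting strong vanishing, via the isoperimetry section again, produces a linear relative homological isoperimetric inequality for $(G,\mathcal{H})$, which by the theorems of Groves--Manning and Osin implies relative hyperbolicity.

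For the higher-degree statement, I would adapt Mineyev's homological bicombing to the relative setting, working on the cusped space: the horoball coning provides the controlled cone types needed to propagate a bounded contracting homotopy beyond degree~$2$. This produces uniformly bounded primitives in every degree $k\ge 2$, yielding both $H^k_\Linfs(G,\mathcal{H};V)=0$ and strong vanishing for $\R$. The main obstacle I anticipate is twofold: verifying that Mineyev's bicombing construction transfers cleanly to the cusped space (one must check that the flow-and-average procedure respects the peripheral subgroups and does not blow up near the horoball basepoints), and setting up the Banach-space machinery in \ref{it:intro_linf_null} $\Rightarrow$ \ref{it:intro_rel_hyp} so that vanishing for a single well-chosen $V$ suffices to conclude strong vanishing for $\R$.
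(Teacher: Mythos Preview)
Your cycle runs in the opposite direction from the paper's, and the step you call ``essentially formal'' --- \ref{it:intro_strong_null} $\Rightarrow$ \ref{it:intro_linf_null} --- is in fact where the content lies. A bounded right inverse $s$ to the $\R$-valued coboundary does \emph{not} extend to $V$-coefficients by ``postcomposition with coefficient evaluations'': composing a $V$-valued cocycle $\alpha$ with functionals $v^*\in V^*$ yields a family of $\R$-valued primitives $\{s(v^*\circ\alpha)\}$, each with the correct norm bound, but there is no mechanism to reassemble these into a single $V$-valued primitive, since $s$ is merely some bounded linear map and need not be given by anything acting entrywise. Equivalently, strong vanishing for $\R$ is the same as vanishing for the single coefficient module $\ell^\infty(\N,\R)$, which does not formally imply vanishing for an arbitrary $V$. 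In the paper this implication is obtained only by passing through \ref{it:intro_rel_hyp}.

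The paper's route is \ref{it:intro_rel_hyp} $\Rightarrow$ \ref{it:intro_linf_null} $\Rightarrow$ \ref{it:intro_strong_null} $\Rightarrow$ \ref{it:intro_rel_hyp}. The last arrow is the Hahn--Banach argument on the Groves--Manning cusped space that you have in mind; the middle arrow is the easy $\ell^\infty(\N,\R)$ trick (so your Matsumoto--Morita step is replaced by something shorter). The first arrow is where the paper diverges most from your plan: instead of building a relative bicombing, it invokes Franceschini's characterisation of relative hyperbolicity via surjectivity of $c^k\colon H_b^k(G,\mathcal{H};W)\to H^k(G,\mathcal{H};W)$ for all normed $\R[G]$-modules $W$, together with the elementary fact that $H_b^k(G,\mathcal{H};\ell^\infty(G,V))=0$ for $k\ge 2$; taking $W=\ell^\infty(G,V)$ then forces $H_\Linfs^k(G,\mathcal{H};V)=0$ for every $k\ge 2$ and every $V$ at once. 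Thus the higher-degree statement needs no new geometric input beyond the existence of a classifying pair relatively finite in every dimension (supplied by a Rips-complex construction over the cusped space). Your proposed transfer of Mineyev's bicombing to the cusped space could be made to work, but it is a substantially harder road, and the obstacle you flag near the horoballs is genuine; the bounded-cohomology detour avoids it entirely.
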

    
    The notion of strong vanishing (which is recalled in \Cref{sec:relative_hyperbolicity}) was already considered in the non-relative setting by Gersten \cite{Gersten1996b}, who proved that $H_\Linfs^2(G;\R)$ vanishes strongly if and only if $G$ is hyperbolic.
    Gautero and Heusener considered strong vanishing in the relative setting and proved that if $(G,\mathcal{H})$ is relatively hyperbolic then $H_\Linfs^2(G,\mathcal{H};\R)$ vanishes strongly, but proved the converse implication only in some special cases.
    
    Another cohomology theory that characterizes amenable and hyperbolic groups is bounded cohomology.
    We recall its definition in \Cref{sec:comparison}.
    A connection between bounded and \Lcoh{} is established by the following proposition, first proved by Gersten for groups of type $F_k$ and trivial real coefficients, and then by Wienhard and Blank in general.
    
    \begin{proposition}[\cite{Gersten1992, Wienhard2012, Blank2015}]
        \label{prop:null-composition_intro}
        Let $G$ be a group, $V$ a normed $\R[G]$-module and $k$ a positive integer.
        Then the composition of the comparison maps
        \begin{equation}
            \label{eq:sequence}
            \begin{tikzcd}
                H_b^k(G;V) \ar[r,"c^k"] & H^k(G;V) \ar[r,"\iota^k"] & H_\Linfs^k(G;V)
            \end{tikzcd}
        \end{equation}
        is the zero map.
    \end{proposition}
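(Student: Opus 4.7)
My plan is to exhibit, at the cochain level, an explicit primitive for $\iota^k(c^k[\phi])$ whenever $\phi$ is a bounded cocycle. Working with the inhomogeneous bar resolution $B_\bullet$, I realize
\[
    H_b^k(G;V),\quad H^k(G;V),\quad H_\Linfs^k(G;V)\;=\;H^k(G;\Linf(G,V))
\]
as the cohomologies of $\mathrm{Hom}_G(B_\bullet,V)_b$, $\mathrm{Hom}_G(B_\bullet,V)$ and $\mathrm{Hom}_G(B_\bullet,\Linf(G,V))$. Under these models, $c^k$ is the inclusion of bounded cochains into all cochains, while $\iota^k$ is induced by post-composition with the natural $G$-equivariant inclusion $\iota\colon V \hookrightarrow \Linf(G,V)$ sending $v$ to the constant function $g \mapsto v$.

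Given a bounded $G$-equivariant cocycle $\phi\colon G^{k+1} \to V$, I would set
\[
    \psi(g_0,\dots,g_{k-1})(g) \;:=\; \phi(g, g_0, \dots, g_{k-1}),
\]
a candidate primitive in $\mathrm{Hom}_G(B_{k-1},\Linf(G,V))$. Three verifications are needed: that $\psi(g_0,\dots,g_{k-1})$ does land in $\Linf(G,V)$ (immediate from the boundedness of $\phi$); that $\psi$ is $G$-equivariant (a short computation using the equivariance of $\phi$ and the chosen $G$-action on $\Linf(G,V)$); and that $d\psi = \iota\circ\phi$. For the last point, I would evaluate the cocycle identity $d\phi=0$ on $(g, g_0, \dots, g_k)$ to obtain
\[
    \phi(g_0,\dots,g_k) \;=\; \sum_{i=0}^{k}(-1)^i\, \phi(g, g_0, \dots, \widehat{g_i}, \dots, g_k);
\]
the right-hand side is precisely $(d\psi)(g_0,\dots,g_k)(g)$, while the left-hand side, constant in $g$, is $(\iota\circ\phi)(g_0,\dots,g_k)(g)$. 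Hence $\iota^k \circ c^k[\phi]=0$.

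The only real obstacle is a bookkeeping one: the formula for $\psi$, the $G$-action on $\Linf(G,V)$, and the embedding $\iota$ must all be chosen coherently with the definition of $\iota^\bullet$ to be given in \Cref{sec:comparison}; once this is fixed, the three verifications are routine manipulations of the bar differential. Conceptually, the cochain $\psi$ is the value of a standard contracting homotopy that witnesses the relative injectivity of $\Linf(G,V)$ as a normed $\R[G]$-module, so $H_b^k(G;\Linf(G,V))=0$ for every $k\geq 1$, and the proposition can equivalently be read as the commutativity of the naturality square
\[
    \begin{tikzcd}
        H_b^k(G;V) \ar[r,"c^k"] \ar[d,"\iota_*"] & H^k(G;V) \ar[d,"\iota^k"] \\
        H_b^k(G;\Linf(G,V)) \ar[r] & H_\Linfs^k(G;V)
    \end{tikzcd}
\]
combined with the vanishing of its bottom-left corner.
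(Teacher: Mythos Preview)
Your argument is correct. The explicit primitive $\psi(g_0,\dots,g_{k-1})(g)=\phi(g,g_0,\dots,g_{k-1})$ is precisely the cone contraction witnessing that $C_b^\bullet(G;V)$, viewed (after the identification you describe) as the complex computing $H_b^\bullet(G;\Linf(G,V))$, is acyclic in positive degrees; your three verifications go through as stated. One terminological nit: you say ``inhomogeneous bar resolution'' but then work in homogeneous coordinates $(g_0,\dots,g_k)$ with the diagonal $G$-action; this is harmless but worth tidying.

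The paper's proof follows exactly the conceptual route you sketch at the end: it factors $\iota^k\circ c^k$ through $H_b^k(G;\Linf(G,V))$ and shows the latter vanishes. The difference is in how that vanishing is obtained. You exhibit the contracting homotopy by hand, which is elementary and self-contained. The paper instead identifies $H_b^\bullet(G;\Linf(G,V))$ with the bounded cohomology $H_b^\bullet(\Ucover{X};V)$ of the universal cover (via the ``evaluation at $1_G$'' correspondence from \Cref{sec:definition}) and then invokes the Gromov--Ivanov mapping theorem to conclude that this is the bounded cohomology of the trivial group. Your approach is shorter for the purely group-theoretic statement; the paper's detour through $H_b^\bullet(\Ucover{X};V)$ is what allows the argument to extend uniformly to topological spaces $X$ that are not $K(G,1)$'s (\Cref{cor:null-composition}), at the cost of importing a nontrivial theorem.
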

    
    In \Cref{sec:comparison} we extend this result to the cohomology of topological spaces (\Cref{cor:null-composition}).
    Our proof relies on an application of the fundamental theorem of Gromov about the vanishing of the bounded cohomology of simply connected spaces.
    A version of this result also holds for group pairs, in degrees $k \ge 2$ (\Cref{cor:null-composition_relative}).
    
    An interesting problem, posed in \cite[Question 8]{Wienhard2012} and \cite[Question 6.3.10]{Blank2015}, asks to investigate under what circumstances the sequence \eqref{eq:sequence} is exact at the central term.
    To tackle this type of questions it is useful to have some criterion to establish that a certain cochain represents the trivial class in \Lcoh{}.
    The following result, proved in \Cref{sec:isoperimetric}, characterizes the (cellular) representatives of the trivial class in \Lcoh{} as those cocycles satisfying a linear isoperimetric inequality for cellular boundaries.
    
    \begin{theorem}
        Let $X$ be a connected \CW{} with universal cover $\Ucover{X}$.
        Let $\alpha \in C^k(\Ucover{X},\R)$ be a cellular cocycle.
        Suppose that there is a real number $\Lambda \ge 0$ such that
        $\abs{\alpha(c)}\ \le\ \Lambda \cdot \norm{\Bd c}_1$
        for every cellular chain $c \in C_k(\Ucover{X}, \R)$.
        Then $\alpha$ is bounded on orbits and $[\alpha] = 0 \in H_\Linfs^k(X;\R)$.
        Moreover, if the $(k-1)$-skeleton of $X$ is finite, then the converse implication also holds.
    \end{theorem}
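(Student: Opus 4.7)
The plan is to deduce the forward direction from the Hahn-Banach theorem applied to a functional that $\alpha$ induces on the space of $(k-1)$-boundaries, and to recover the converse via the observation that a cochain bounded on orbits is automatically uniformly bounded once the number of orbits is finite.

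\emph{Forward direction.} For every cellular $k$-cycle $c$ one has $\norm{\partial c}_1 = 0$, so the isoperimetric hypothesis forces $\alpha(c) = 0$. Hence $\alpha$ factors through the boundary map as $\alpha = \bar\alpha \circ \partial$, where $\bar\alpha$ is a well-defined linear functional on the subspace $B_{k-1}(\Ucover{X};\R) \subseteq C_{k-1}(\Ucover{X};\R)$ of cellular $(k-1)$-boundaries, and the hypothesis translates into $\abs{\bar\alpha(b)} \le \Lambda \norm{b}_1$ for every $b \in B_{k-1}(\Ucover{X};\R)$. Equip $C_{k-1}(\Ucover{X};\R)$ with the $\ell^1$-norm relative to the cell basis; Hahn-Banach then extends $\bar\alpha$ to a continuous linear functional $\beta : C_{k-1}(\Ucover{X};\R) \to \R$ of operator norm still at most $\Lambda$. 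Since each $(k-1)$-cell has $\ell^1$-norm $1$, the cochain $\beta$ satisfies $\abs{\beta(\sigma)} \le \Lambda$ uniformly, so it is in particular bounded on orbits; and by construction $\delta \beta = \alpha$, giving $[\alpha] = 0$ in $H_\Linfs^k(X;\R)$. That $\alpha$ itself is bounded on orbits then follows from the fact that the covering action of $\pi_1(X)$ is cellular and therefore preserves the $\ell^1$-norms of chains: $\norm{\partial \sigma}_1$ is constant along the orbit of a $k$-cell $\sigma$, and so the bound $\Lambda \norm{\partial \sigma}_1$ on $\abs{\alpha(\sigma)}$ is uniform on that orbit.

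\emph{Converse direction.} Assume now that the $(k-1)$-skeleton of $X$ is finite and that $[\alpha] = 0$ in $H_\Linfs^k(X;\R)$, and write $\alpha = \delta \beta$ for a $(k-1)$-cochain $\beta$ bounded on orbits. Finiteness of the $(k-1)$-skeleton means the covering action has only finitely many orbits of $(k-1)$-cells, so boundedness on orbits upgrades to a uniform bound $\abs{\beta(\sigma)} \le M$ on every $(k-1)$-cell $\sigma$. For any cellular $k$-chain $c$ we then obtain $\abs{\alpha(c)} = \abs{\beta(\partial c)} \le M \norm{\partial c}_1$, which is the desired inequality with $\Lambda := M$. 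The only step requiring anything beyond routine bookkeeping is the appeal to Hahn-Banach in the forward direction; as a byproduct, the argument also shows that any $\alpha$ vanishing in $H_\Linfs^k$ under the isoperimetric hypothesis admits a \emph{uniformly} bounded primitive, strictly stronger than merely orbit-bounded.
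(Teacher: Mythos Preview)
Your proof is correct and follows essentially the same line as the paper: both define a functional on $B_{k-1}(\Ucover{X};\R)$ via $\alpha$, invoke Hahn--Banach to extend it to a globally bounded $(k-1)$-cochain, and handle the converse by noting that finitely many orbits turns ``bounded on orbits'' into ``uniformly bounded''. The only cosmetic difference is that you observe $\alpha$ vanishes on cycles and hence factors through $\partial$, whereas the paper directly checks well-definedness of $\mu(\partial c) = \alpha(c)$; these are the same argument.
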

    
    This characterization is implicitly contained in the works of Gersten \cite{Gersten1996b} and Sikorav \cite{Sikorav2001}; we present it explicitly here to make it more easily accessible, especially in view of an application by Frigerio and Sisto \cite{Frigerio2020}: the construction of a finitely generated group $G$ such that the sequence $H_b^2(G;\R) \to H^2(G;\R) \to H_\Linfs^2(G;\R)$ is not exact\footnote{After earlier versions of this paper appeared on arXiv, a finitely presented group with the same property has been found \cite{AM2022}, settling a conjecture of Gromov \cite[page 138]{Gromov1993}.}.
    In \Cref{sec:isoperimetric} we briefly describe this and other applications.
    
    In \Cref{sec:uncomputability} we consider the problem of computing \Lcoh{} algorithmically.
    It is easy to devise an algorithm that computes the ordinary homology or cohomology of finite simplicial complexes, with coefficients in $\Z$ or $\R$.
    By contrast, there is no algorithm to decide, given a finite presentation $\langle S|R \rangle$ of a group, whether $H_2(\langle S|R \rangle;\Z)$ vanishes or not \cite[Theorem 4]{Gor1995}.
    The argument in \cite{Gor1995} also shows that the vanishing of $H^2(\langle S|R \rangle;\R)$ is undecidable.
    We prove that the analogous problem for \Lcoh{} is undecidable, both for finitely presented groups and finite simplicial complexes, in every positive degree:
    \begin{theorem}
        \label{thm:intro_undecidable}
        For every $n \ge 1$ the following algorithmic problems are undecidable:
        \begin{itemize}
            \item Given a finite presentation $\langle S|R\rangle$, decide whether $H_\Linfs^n(\langle S|R\rangle;\R) = 0$;
            \item Given a finite simplicial complex $X$, decide whether $H_\Linfs^n(X;\R) = 0$.
        \end{itemize}
    \end{theorem}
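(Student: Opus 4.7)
My plan is to reduce, for each $n\ge 1$, from a known undecidable problem, with separate arguments in degree $1$ and in higher degrees, and a parallel construction for finite simplicial complexes.

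For $n=1$ I would first prove the clean characterization: for a finitely generated group $G$, $H_\Linfs^1(G;\R)=0$ if and only if $G$ is finite. For finite $G$, $\Linf(G,\R)$ coincides with the coinduced module $\mathrm{CoInd}_{\{1\}}^G\R$ (all functions on a finite set are bounded), so Shapiro's lemma gives the vanishing. For infinite $G$ with a finite generating set $S$, the ``word-length derivation'' $g\mapsto\bigl(x\mapsto|g^{-1}x|_S-|x|_S\bigr)$ is a derivation $G\to\Linf(G,\R)$ (values bounded on each $g$ by $|g|_S$ via the triangle inequality) that is not inner: any primitive $f\in\Linf(G,\R)$ would satisfy $f(g^{-1})-f(e)=|g^{-1}|_S$, contradicting boundedness. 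Combined with the undecidability of finiteness for finite presentations (a classical Markov property), this handles the $n=1$ group case.

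For $n\ge 2$ I would fix two calibration groups, $\{1\}$ (with $H_\Linfs^n=0$) and $\Z^n$, for which I claim $H_\Linfs^n(\Z^n;\R)\neq 0$: since $\Z^n$ is a Poincar\'e duality group of dimension $n$, one has $H^n(\Z^n;M)\cong M_{\Z^n}$ (coinvariants), and for $M=\Linf(\Z^n,\R)$ any invariant mean on the amenable group $\Z^n$ provides a non-trivial quotient $\Linf(\Z^n,\R)_{\Z^n}\to\R$. The plan is to invoke an Adyan--Rabin--Markov style construction, starting from a finitely presented group with unsolvable word problem (Boone--Novikov) and producing, from each Turing machine $M$, a finite presentation $P_M$ whose group is $\{1\}$ or $\Z^n$ according as $M$ does or does not halt. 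An algorithm for $H_\Linfs^n(P_M;\R)=0$ would then decide halting. For the parallel simplicial-complex statement, I would realize each $P_M$ by a finite triangulable complex that is either a point or the $n$-torus $T^n$ (both finite $K(G,1)$s for the respective calibration groups), using a staged simplicial collapse driven by $M$'s execution so that the construction is computable.

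The main obstacle is arranging the Adyan--Rabin--Markov reduction so that the two possible outcomes are exactly $\{1\}$ and $\Z^n$, rather than the generic ``trivial vs.\ a fixed nontrivial group'' that the standard theorem supplies. This likely requires a refined construction, via a carefully designed sequence of HNN extensions and amalgamated products that preserves the target isomorphism type; alternatively, one could sidestep the issue by identifying a Markov property that precisely captures the vanishing of $H_\Linfs^n$, which would in turn require a good understanding of how $\Linf$-cohomology behaves under group embeddings and extensions.
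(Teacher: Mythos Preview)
Your $n=1$ argument is correct and essentially the same as the paper's: the word-length function on the Cayley graph gives a bounded $1$-cocycle (the paper phrases it cellularly as $\alpha=\delta u$ with $u$ the combinatorial distance to a basepoint) whose primitives are all unbounded, and the finite case is handled by Shapiro's lemma (the paper uses the equivalent observation that finite groups have the same \Lcoh{} as the trivial group).

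For $n\ge 2$, however, you have a genuine gap, and you yourself name it. The Adian--Rabin machinery does not let you prescribe \emph{both} outcomes of the reduction; the standard construction produces, from a word $w$ in a group with unsolvable word problem, a group $G_w$ that is trivial when $w=1$ and otherwise contains a fixed complicated group. There is no reason to expect $G_w\cong\Z^n$ in the nontrivial case, and forcing this would require an entirely new construction. Your fallback, identifying ``vanishing of $H_\Linfs^n$'' as a Markov property, would need $H_\Linfs^n$ to be monotone under embeddings, which it is not (e.g.\ $\Z$ embeds in many hyperbolic groups).

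The paper avoids this difficulty by not trying to control the nontrivial outcome at all. The key lemma (Proposition~8.1) is a uniform non-vanishing statement: if $H$ is \emph{any} infinite group with a finite generating set, then $H_\Linfs^n(\Z^{n-1}\times H;\R)\neq 0$. This is proved by an explicit cocycle on $\R^{n-1}\times\Ucover{X}$, essentially the product of the word-length cocycle on $\Ucover{X}$ with the top cell of the torus, together with an isoperimetric counting argument showing no primitive can be bounded on orbits. Now the reduction is trivial: given a presentation of $H$, form $G=\Z^{n-1}\times H$. If $H$ is finite, then $\Z^{n-1}<G$ has finite index, so $H_\Linfs^n(G;\R)\cong H_\Linfs^n(\Z^{n-1};\R)=0$ by cohomological dimension; if $H$ is infinite, the proposition applies. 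Thus deciding the vanishing of $H_\Linfs^n$ would decide finiteness of $H$.

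For finite simplicial complexes the paper uses a similar product-with-torus trick, but reduces from Novikov's effective sequence of high-dimensional complexes that are either contractible or have infinite fundamental group (rather than trying to realize specific targets like a point or $T^n$). Your proposed ``staged simplicial collapse driven by $M$'' would again require constructing complexes with a prescribed homotopy type in the nontrivial branch, which is the same obstacle in disguise.
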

    The same algorithmic problems have recently been considered in the context of bounded cohomology: for every $n \ge 2$, the vanishing of bounded cohomology in degree $n$ is algorithmically undecidable, both for finitely presented groups and finite simplicial complexes \cite{FLM2021}.
    
    In \Cref{sec:differential} we consider the \Lcoh{} of smooth manifolds and present a version of the de Rham's theorem in the context of \Lcoh{}.
    In particular, by taking differential forms on the universal cover which are bounded on orbits in a suitable sense, we define the \Ldrc{} of a manifold, and then prove that it is canonically isomorphic to the singular \Lcoh{}.
    After establishing a Poincaré lemma and a Mayer-Vietoris sequence for \Ldrc{}, the result follows from a standard local-to-global argument.
    Similar results have been obtained in \cite{Gol1988}, \cite{AttieBlock1998} and \cite{Mineyev1999}, but they involve differential forms bounded \emph{globally}; for this reason, they require a ``bounded geometry'' condition on the manifold, which is not needed in our version of the theorem.
    
    The results contained in \Cref{sec:differential} are not used in other parts of the paper, and vice versa.

    \subsection*{Acknowledgements}
    I would like to thank my PhD advisor Roberto Frigerio for many helpful comments and suggestions.

    \section{\texorpdfstring{\bm{$\ell^\infty$}-cohomology}{L-infinity cohomology} of groups and spaces}
    \label{sec:definition}
    
    Let $G$ be a group, and let $V$ be a normed $\R[G]$-module, \ie, a normed vector space on which $G$ acts on the left by linear isometries.
    We denote by
    \[\Linf(G,V) = \{ f:G \to V \ \mid \ f(G) \text{ is a bounded subset of } V\} \]
    the normed $\R[G]$-module of bounded functions from $G$ to $V$.
    The norm is given by $\norm{f} = \sup\{ \norm{f(g)} \ \mid \ g \in G\}$, and the group $G$ acts on $\Linf(G,V)$ in the following way: given a function $f \in \Linf(G,V)$ and an element $g \in G$, the function $g \cdot f \in \Linf(G,V)$ is defined by the formula $(g \cdot f)(h) = g \cdot (f(g^{-1}h))$, where $h$ varies in $G$.
    
    \begin{definition}
        \label{def:linfty-group}
        Let $G$ be a group and $V$ a normed $\R[G]$-module.
        We define the \Lcoh{} of $G$ with coefficients in $V$ as
        \begin{align*}
            H_\Linfs^\bullet(G; V) = H^\bullet(G; \Linf(G, V)),
        \end{align*}
        \ie, the ordinary cohomology of $G$ with coefficients in the $\R[G]$-module $\Linf(G, V)$.
        Notice that the norm on $\Linf(G,V)$ does not play any role in the definition.
    \end{definition}
    
    We introduce an analogous definition in the topological setting.
    We shall implicitly assume that any topological space under consideration is locally path-connected and semilocally simply connected.
    This conditions ensure the existence and uniqueness (up to isomorphism) of universal covers of connected spaces.
    Moreover, we restrict our definition to connected spaces.
    
    \begin{definition}
        \label{def:linfty-topological}
        Let $X$ be a connected topological space with a basepoint $x_0$.
        Denote by $G = \pi_1(X, x_0)$ the fundamental group.
        The \Lcoh{} of $X$ with coefficients in a normed $\R[G]$-module $V$ is defined as
        \begin{align*}
            H_\Linfs^\bullet(X; V) = H^\bullet(X; \Linf(G,V)).
        \end{align*}
        The latter denotes the singular equivariant cohomology \cite[Section VI.3]{Whitehead1978} of the universal cover of $X$ with coefficients in the $\R[G]$-module $\Linf(G,V)$.
    \end{definition}
    
    \begin{remark}
        Let $X$ be a $K(G,1)$, \ie, a connected topological space with a fixed isomorphism $\pi_1(X,x_0) \cong G$ and whose universal cover is contractible.
        Then, by general well-known theorems in cohomology, there is a canonical isomorphism $H^\bullet(G; \Linf(G,V)) \cong H^\bullet(X; \Linf(G,V))$.
    \end{remark}
    
    We now unravel the definition in order to get a more concrete understanding of the \Lcoh{} of a topological space $X$.
    Denote by $\Ucover{X}$ the universal cover of $X$.
    By definition, $H^\bullet(X; \Linf(G, V))$ is the cohomology of the following complex of vector spaces: 
    \[\begin{tikzcd}[column sep=small]
        0 \ar[r] & C^0(\Ucover{X};\Linf(G,V))^G \ar[r,"\Cbd"] & C^1(\Ucover{X};\Linf(G,V))^G \ar[r,"\Cbd"] & \cdots
    \end{tikzcd}.\]
    Here, $C^k(\Ucover{X};\Linf(G,V))$ is the $\R[G]$-module of singular $k$-cochains in $\Ucover{X}$ with coefficients in $\Linf(G,V)$, the superscript $^G$ denotes taking the $G$-invariant subspace, and $\Cbd$ is the usual coboundary operator.
    
    Let us recall how $G$ acts on the space of singular $k$-cochains.
    Denote by $\Simplex{k} \subseteq \R^{k+1}$ the standard $k$-simplex, spanned by the canonical basis of $\R^{k+1}$.
    The group $G$ acts on $\Ucover{X}$ by covering transformations, and this induces an action on the set of singular $k$-simplices $\Simplex{k}(\Ucover{X}) = \{\sigma:\Simplex{k} \to \Ucover{X} \text{ continuous}\}$.
    A singular $k$-cochain $\alpha$ is an arbitrary function $\alpha:\Simplex{k}(\Ucover{X}) \to \Linf(G,V)$.
    An element $g \in G$ transforms this function in the usual way:
    $(g \cdot \alpha)(\sigma) = g \cdot (\alpha(g^{-1}\cdot \sigma))$
    for every $\sigma \in \Simplex{n}(\Ucover{X})$.
    In particular, $\alpha$ belongs to the $G$-invariant subspace when the equality $\alpha(g \cdot \sigma) = g \cdot \alpha(\sigma)$ holds for every $g \in G$ and $\sigma \in \Simplex{n}(\Ucover{X})$, \ie, when $\alpha$ is equivariant.
    
    Any equivariant $k$-cochain $\alpha$ contains a lot of redundant information.
    One way to get rid of the redundancy is to replace every $\alpha(\sigma) \in \Linf(G,V)$ with its evaluation at the identity $1_G \in G$.
    Formally, define $\hat{\alpha} \in C^k(\Ucover{X};V)$ by setting $\hat{\alpha}(\sigma) = \alpha(\sigma)(1_G)$ for every $\sigma \in \Simplex{k}(\Ucover{X})$.
    Then $\alpha$ can be recovered from $\hat{\alpha}$, because the condition $g \cdot \alpha = \alpha$ gives
    \begin{align*}
        (\alpha(\sigma))(g)\ &=\ ((g \cdot \alpha)(\sigma))(g)\ =\ (g \cdot \alpha(g^{-1}\cdot\sigma))(g)\ =\ g \cdot (\alpha(g^{-1}\cdot\sigma)(1_G)) \\
        &=\ g \cdot \hat{\alpha}(g^{-1}\cdot\sigma).
    \end{align*}
    Not every cochain $\beta \in C^k(\Ucover{X};V)$ is of the form $\beta = \hat{\alpha}$ for some $\alpha$ as above.
    In fact, if $\beta = \hat{\alpha}$, then for any simplex $\sigma \in \Simplex{k}(\Ucover{X})$ the subset
    \[ \{ \hat\alpha(g \cdot \sigma)\ \mid\ g \in G\}\ =\ \{ g\cdot(\alpha(\sigma)(g^{-1}))\ \mid\ g \in G\}\ \subseteq\ V \]
    is bounded, because $\alpha(\sigma):G \to V$ is a bounded function and the action of $G$ on $V$ preserves the norm.
    
    \begin{definition}
        \label{defn:bounded_on_orbits}
        We say that a cochain $\beta \in C^k(\Ucover{X};V)$ is bounded on orbits if for every $\sigma \in \Simplex{k}(\Ucover{X})$ the subset $\{ \beta(g \cdot \sigma)\ \mid\ g \in G\} \subseteq V$ is bounded, and we denote by $C_\Linfs^k(\Ucover{X};V)$ the subspace containing such cochains.
    \end{definition}
    
    \begin{remark}
        The coboundary of a cochain bounded on orbits is bounded on orbits.
        In other words $C_\Linfs^\bullet(\Ucover{X};V)$ is a subcomplex of $C^\bullet(\Ucover{X};V)$.
    \end{remark}

    If $\beta \in C^k(\Ucover{X};V)$ is of the form $\beta = \hat{\alpha}$ for some $\alpha \in C^k(\Ucover{X};\Linf(G,V))^G$, then it is bounded on orbits.
    On the other hand, if $\beta$ is bounded on orbits, then $\beta = \hat{\alpha}$, where $\alpha$ is defined by the formula $(\alpha(\sigma))(g) = g \cdot \beta(g^{-1}\cdot\sigma)$ and is easily seen to be invariant (and with values in $\Linf(G,V)$).
    Notice also that the evaluation at $1_G$ is $\R$-linear and commutes with computing the coboundary, \ie, $\Cbd(\hat{\alpha}) = \widehat{(\Cbd\alpha)}$.
    This discussion proves the following:
    
    \begin{proposition}
        The \Lcoh{} $H_\Linfs^\bullet(X;V)$ is canonically isomorphic to the cohomology of the complex of vector spaces $C_\Linfs^\bullet(\Ucover{X};V)$.
    \end{proposition}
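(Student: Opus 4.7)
The plan is to construct an explicit isomorphism of cochain complexes between $C^\bullet(\Ucover{X};\Linf(G,V))^G$, whose cohomology defines $H_\Linfs^\bullet(X;V)$, and $C_\Linfs^\bullet(\Ucover{X};V)$; then the claim on cohomologies follows by passing to $H^\bullet$. Most of the work is already laid out in the preceding discussion, so the proof essentially consists in assembling the observations into a clean statement.

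First, I would define the map $\Phi^k \colon C^k(\Ucover{X};\Linf(G,V))^G \to C^k(\Ucover{X};V)$ by $\Phi^k(\alpha) = \hat\alpha$ with $\hat\alpha(\sigma) = \alpha(\sigma)(1_G)$, and verify that $\hat\alpha$ is bounded on orbits. Concretely, for $\sigma \in \Simplex{k}(\Ucover{X})$ and $g \in G$, the equivariance of $\alpha$ yields $\hat\alpha(g\cdot\sigma) = \alpha(g\cdot\sigma)(1_G) = (g\cdot\alpha(\sigma))(1_G) = g\cdot(\alpha(\sigma)(g^{-1}))$; since $\alpha(\sigma)\in\Linf(G,V)$ is bounded and $G$ acts by isometries on $V$, the set $\{\hat\alpha(g\cdot\sigma)\mid g\in G\}$ is bounded. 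Thus $\Phi^k$ lands in $C_\Linfs^k(\Ucover{X};V)$. Going the other way, for $\beta \in C_\Linfs^k(\Ucover{X};V)$ I set $\Psi^k(\beta) = \tilde\beta$ with $(\tilde\beta(\sigma))(g) = g\cdot \beta(g^{-1}\cdot\sigma)$; boundedness of $\tilde\beta(\sigma)$ on $G$ follows from the hypothesis that $\beta$ is bounded on orbits (again using the isometric action), so $\tilde\beta(\sigma)\in\Linf(G,V)$. A direct check of the definition shows $h\cdot\tilde\beta = \tilde\beta$ for every $h\in G$, so $\tilde\beta \in C^k(\Ucover{X};\Linf(G,V))^G$.

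Next, I would verify that $\Phi^k$ and $\Psi^k$ are mutually inverse. In one direction, $\widehat{\tilde\beta}(\sigma) = \tilde\beta(\sigma)(1_G) = \beta(\sigma)$. In the other, starting from $\alpha$ and applying the identity $(\alpha(\sigma))(g) = g\cdot\hat\alpha(g^{-1}\cdot\sigma)$ already noted in the text gives $\widetilde{\hat\alpha} = \alpha$. Both $\Phi^k$ and $\Psi^k$ are clearly $\R$-linear. Finally, the evaluation at $1_G$ commutes with the simplicial coboundary: for every $\alpha$, $\widehat{\Cbd\alpha}(\sigma) = (\Cbd\alpha)(\sigma)(1_G) = \sum_{i=0}^{k+1} (-1)^i \alpha(\partial_i\sigma)(1_G) = \Cbd\hat\alpha(\sigma)$, so $\Phi^\bullet$ (and hence $\Psi^\bullet$) is a chain map. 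This produces an isomorphism of cochain complexes and thus a canonical isomorphism of cohomologies.

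No step here is a genuine obstacle: the content is purely bookkeeping, and the only place that requires a little care is noticing that both the boundedness of $\tilde\beta(\sigma)$ as an element of $\Linf(G,V)$ and the boundedness of $\hat\alpha$ on orbits rely on $G$ acting on $V$ by isometries, which is part of the standing assumption that $V$ is a normed $\R[G]$-module.
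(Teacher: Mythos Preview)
Your proof is correct and follows exactly the approach of the paper: the proposition is stated as an immediate consequence of the preceding discussion, which constructs the maps $\alpha \mapsto \hat\alpha$ and $\beta \mapsto \tilde\beta$ (in your notation $\Phi$ and $\Psi$), checks they are mutually inverse, and observes that evaluation at $1_G$ is linear and commutes with the coboundary. You have simply made each of these verifications explicit.
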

    
    \begin{remark}
        The complex $C_\Linfs^\bullet(\Ucover{X};V)$, and hence its cohomology, is clearly independent of the action of $G$ on the normed vector space $V$.
        This comes as no surprise, because different actions give rise to isomorphic $\R[G]$-module structures on $\Linf(G, V)$.
        In fact, let $V$ and $W$ be two normed $\R[G]$-modules and suppose that $\varphi:V \to W$ is a norm-preserving isomorphism (not necessarily a $G$-equivariant one).
        Then the map sending any $f \in \Linf(G,V)$ to the function $f' \in \Linf(G,W)$ defined as 
        $f'(h) = h \cdot \varphi(h^{-1} \cdot f(h))$ for every $h \in G$
        gives an isomorphism of $\R[G]$-modules.

        However, in \Cref{sec:comparison} we consider a natural ``comparison'' map from ordinary to \Lcoh{} which, in general, depends on the action of $G$ on the coefficients, and allows us to obtain in \Cref{sec:amenability} a characterization of amenable groups.
    \end{remark}

    If $X$ is a connected \CW{}, the entire discussion can be repeated using cellular cochains, replacing $\Simplex{k}(\Ucover{X})$ with the set of $k$-cells of $\Ucover{X}$.
    In particular, \Cref{defn:bounded_on_orbits} has an obvious adaptation for cellular cochains, and the usual isomorphism between singular and cellular cohomology implies that $H_\Linfs^\bullet(X; V)$ is canonically isomorphic to the cohomology of the complex of cellular cochains bounded on orbits.
    If $X$ has a finite $m$-skeleton, in degrees $\le m$ a cochain in $\Ucover{X}$ is bounded on orbits if and only if it is uniformly bounded.
    Therefore, Gersten's definition of \Lcoh{} is recovered.
    
    If $G$ is a group, the role of $\Simplex{k}(\Ucover{X})$ can be taken by the set $G^{k+1}$, on which $G$ acts diagonally.
    In particular, the vector spaces
    $C_\Linfs^k(G;V) = \{\alpha:G^{k+1} \to V \text{ bounded on orbits}\}$
    define a complex whose cohomology is canonically isomorphic to the \Lcoh{} of $G$ with coefficients in $V$.
    This is equivalent to the definition of \Lcoh{} of (finitely generated) groups that appears in Elek's paper \cite{Elek1998}, where an $\ell^p$-cohomology theory for graphs with uniformly bounded vertex degrees is worked out.
    Elek's cohomology is invariant under quasi-isometries of graphs, and gives our \Lcoh{} when computed on the Cayley graph of a finitely generated group.
    Thus, two quasi-isometric finitely generated groups have isomorphic \Lcoh{}, a result already proved by Gersten \cite[Theorem 11.4]{Gersten1992}.
    \begin{remark}
        \label{rem:finite_index}
        The special case of a group $G$ and a subgroup $H<G$ with finite index is particularly easy to understand.
        If $X$ is a cellular $K(G,1)$ space, then it has a finite cover $Y$ which is a $K(H,1)$ space.
        The spaces $X$ and $Y$ have the same universal cover $\Ucover{X}$, and a cellular cochain on $\Ucover{X}$ is bounded on $G$-orbits if and only if it is bounded on $H$-orbits.
        Therefore, $X$ and $Y$ have isomorphic \Lcoh{}.
    \end{remark}

    \section{Comparison maps and bounded cohomology}
    \label{sec:comparison}
    
    We start this section by quickly recalling the definition of bounded cohomology of groups and topological spaces.
    
    Let $G$ be a group and let $V$ be a normed $\R[G]$-module.
    The $\R[G]$-modules $C_b^k(G;V) = \{\alpha:G^{k+1}\to V\text{ bounded}\}$ form a subcomplex of the standard homogeneous complex $C^\bullet(G;V)$ associated to $G$ and $V$.
    The bounded cohomology $H_b^\bullet(G;V)$ is the cohomology of $C_b^\bullet(G;V)^G$, the complex of bounded $G$-invariant $V$-valued cochains.
    The inclusion $C_b^\bullet(G;V) \subseteq C^\bullet(G;V)$ induces a natural map $c^\bullet:H_b^\bullet(G;V) \to H^\bullet(G;V)$, called the comparison map.
    
    Let now $X$ be a connected topological space with fundamental group $G$ and universal cover $\Ucover{X}$.
    Then $C_b^k(\Ucover{X};V)$ denotes the $\R[G]$-module of bounded singular $V$-valued $k$-cochains in $\Ucover{X}$.
    The bounded cohomology $H_b^\bullet(X;V)$ is the cohomoloy of $C_b^\bullet(\Ucover{X};V)^G$.
    The inclusion of bounded cochains into singular cochains gives the comparison map $c^\bullet:H_b^\bullet(X;V) \to H^\bullet(X;V)$.
    
    The following theorem summarizes some results proved in \cite{Ivanov2017}.
    
    \begin{theorem}
        \label{thm:mapping-theorem}
        Let $X$ be a connected topological space with fundamental group $G = \pi_1(X,x_0)$.
        Let $V$ be a normed $\R[G]$-module.
        Assume that either $V$ is a dual normed $\R[G]$-module or the universal cover of $X$ is contractible (\ie, $X$ is a $K(G,1)$ space).
        Then $H_b^\bullet(G;V)$ and $H_b^\bullet(X;V)$ are canonically isomorphic.
    \end{theorem}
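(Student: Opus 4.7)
\textbf{Proof proposal for \Cref{thm:mapping-theorem}.}
My strategy follows the homological-algebra framework for bounded cohomology developed by Ivanov: I will identify the cohomology of $C_b^\bullet(\Ucover{X};V)^G$ with $H_b^\bullet(G;V)$ by exhibiting the augmented complex
\[ 0 \to V \to C_b^\bullet(\Ucover{X};V) \]
as a \emph{strong relatively injective resolution} of $V$ in the category of normed $\R[G]$-modules. Once this is done, the general comparison theorem for such resolutions gives a canonical isomorphism with the cohomology of any other such resolution (in particular, of the standard homogeneous complex that defines $H_b^\bullet(G;V)$), and the isomorphism is induced by any equivariant chain extension of $\Id_V$. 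So the whole proof reduces to checking the two properties of the proposed resolution, and both hypotheses of the theorem will intervene precisely in ensuring these properties.

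The first step is \emph{relative injectivity} of each $C_b^k(\Ucover{X};V)$, and this holds uniformly in both cases. Since $G$ acts freely and properly discontinuously on $\Ucover{X}$ by covering transformations, one can equivariantly write $\Simplex{k}(\Ucover{X}) \cong G \times \Sigma_k$ for a set $\Sigma_k$ of orbit representatives; using this splitting (or, more intrinsically, a Bruhat-type partition of unity along orbits) one verifies that $C_b^k(\Ucover{X};V)$ has the standard form of a relatively induced module, hence is relatively injective in Ivanov's sense.

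The second step is to produce the contracting cochain homotopy needed both for exactness and for strongness of the augmented complex; this is where the two hypotheses enter differently. \emph{Case (a), $\Ucover{X}$ contractible:} a topological null-homotopy on $\Ucover{X}$ yields a chain contraction of the augmented chain complex of $\Ucover{X}$ whose dualization is automatically bounded and norm non-increasing in each degree, giving both acyclicity and strongness. \emph{Case (b), $V$ a dual normed $\R[G]$-module:} here contractibility is unavailable and the naive cone-on-a-basepoint homotopy on bounded cochains need not converge. One instead fixes a basepoint $\tilde x_0 \in \Ucover{X}$ and, for each bounded cocycle, averages the would-be primitive against a net of partial sums in the dual norm, using the weak-$*$ compactness of the unit ball of $V$ to extract a bounded limit; this produces the required strong contracting homotopy and is essentially the content of the Ivanov--Monod machinery invoked in \cite{Ivanov2017}.

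\textbf{Main obstacle.} The routine part is \emph{Case (a)} together with relative injectivity. The delicate point is \emph{Case (b)}: passing from an unbounded cone homotopy to a bounded one requires exploiting the weak-$*$ topology on $V$ in an essential way, and one must verify that the limit retains both $G$-equivariance and the norm control required by strongness. Keeping these two properties simultaneously under control through the weak-$*$ limit is where one genuinely needs the results of \cite{Ivanov2017}, rather than a direct elementary construction.
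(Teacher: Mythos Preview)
The paper does not supply its own proof of \Cref{thm:mapping-theorem}: the theorem is stated as a summary of results from \cite{Ivanov2017}, and is used as a black box thereafter. Your proposal is thus not being compared against an argument in the paper but against the cited reference, and your overall strategy---show that the augmented complex $0 \to V \to C_b^\bullet(\Ucover{X};V)$ is a strong relatively injective resolution of $V$, then invoke the fundamental lemma of homological algebra for such resolutions---is exactly Ivanov's framework. The relative injectivity step and Case~(a) are fine as you describe them.

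Your sketch of Case~(b), however, is too loose to count as a proof and does not really match what happens in \cite{Ivanov2017}. The phrase ``averages the would-be primitive against a net of partial sums in the dual norm, using the weak-$*$ compactness of the unit ball of $V$ to extract a bounded limit'' does not describe a concrete construction: there is no evident net, and even granting one, weak-$*$ limits of primitives need not remain primitives of the original cocycle, nor is it clear why the resulting maps assemble into a \emph{chain} contraction (a family of maps $k^n: C_b^n \to C_b^{n-1}$ satisfying $\Cbd k + k\Cbd = \Id$, not merely degree-wise splittings). Ivanov's actual argument for simply connected $\Ucover{X}$ is more structural---it proceeds by reducing to spaces with good local models (e.g.\ via principal simplicial bundles or an analysis through the loop space / Dold--Thom type constructions) and builds the contracting homotopy inductively with explicit norm control, with duality of $V$ entering to guarantee that certain averaging operators land in $V$ rather than its bidual. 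You have correctly identified \emph{where} the difficulty lies and \emph{that} duality is the key hypothesis, but the mechanism you propose is not the one that works; here you really are deferring to \cite{Ivanov2017}, which is precisely what the paper does.
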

    
    Also $\Linf$-cohomology comes with a natural comparison map, defined as follows.
    Let $G$ be a group and $V$ a normed $\R[G]$-module.
    There is a natural inclusion $\iota: V \to \Linf(G,V)$ sending any vector $v \in V$ to the constant function $\iota(v):G\to V$ assuming the value $v$ at every element of $G$.
    The map $\iota$ is the only linear $G$-equivariant function such that $\iota(v)(1_G) = v$ for every $v \in V$.
    
    \begin{definition}
        Let $X$ be a connected topological space with fundamental group $G = \pi_1(X,x_0)$, and let $V$ be a normed $\R[G]$-module.
        We denote by $\iota^\bullet:H^\bullet(X;V) \to H_\Linfs^\bullet(X;V)$ the map induced by $\iota$ in cohomology.
        We also denote by $\iota^\bullet:H^\bullet(G;V) \to H_\Linfs^\bullet(G;V)$ the map induced by $\iota$ in group cohomology.
    \end{definition}
    
    We call $\iota^\bullet$ the comparison map from ordinary cohomology to \Lcoh{}.
    It can be thought of as the map induced by the inclusion of $G$-equivariant cochains into the set of cochains bounded on orbits.
    
    The following proposition and corollary establish a connection between \Lcoh{} and bounded cohomology.
    
    \begin{proposition}
        \label{prop:bounded_and_linf}
        Let $X, G$ and $V$ be as in \Cref{thm:mapping-theorem}.
        Then, for every $k \ge 1$, $H_b^k(X;\Linf(G,V)) = 0$.
    \end{proposition}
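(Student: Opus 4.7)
\emph{Plan.} My approach is to reduce the topological statement to a group-theoretic one via the mapping theorem, and then to exploit the fact that the coefficient module $\Linf(G,V)$ is a distinguished ``injective'' object in the world of bounded cohomology, so that its bounded cohomology trivially vanishes in positive degrees.

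For the reduction, I would apply \Cref{thm:mapping-theorem} with coefficient module $\Linf(G,V)$ in place of $V$. If $X$ is a $K(G,1)$, the theorem applies without any condition on the coefficients. If instead $V\cong W^*$ is a dual normed $\R[G]$-module, then $\Linf(G,V)$ is isomorphic as a Banach $\R[G]$-module to $\ell^1(G,W)^*$ and is therefore itself dual; so the theorem again applies. In either case we obtain a canonical isomorphism $H_b^k(X;\Linf(G,V))\cong H_b^k(G;\Linf(G,V))$, reducing the problem to showing that the right-hand side vanishes for $k\ge 1$.

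The group-level vanishing relies on the observation that $\Linf(G,V)$ is \emph{relatively injective} as a Banach $\R[G]$-module, in the sense of Ivanov. Concretely, given a $G$-equivariant embedding $i:A\hookrightarrow B$ admitting a bounded left inverse $\sigma:B\to A$ (not necessarily equivariant) and a $G$-morphism $\varphi:A\to\Linf(G,V)$, one passes to the scalar map $\psi(a)=\varphi(a)(1_G)$, extends it to $\bar\psi=\psi\circ\sigma:B\to V$, and lifts back via $\bar\varphi(b)(g)=g\cdot\bar\psi(g^{-1}b)$. The identity $\bar\varphi\circ i=\varphi$ holds because $\sigma\circ i=\Id_A$ and $i$ is $G$-equivariant, and the equivariance of $\bar\varphi$ is immediate from its definition. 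Once this is established, the standard principle of the Ivanov machinery \cite{Ivanov2017} — that a relatively injective module serves as its own relatively injective resolution and hence has trivial bounded cohomology in positive degrees — concludes the proof.

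The most delicate step, as I see it, is the first one: the verification that $\Linf(G,V)$ inherits the structure of a dual Banach $\R[G]$-module with weak-$*$ continuous action whenever $V$ does, so that the hypotheses of \Cref{thm:mapping-theorem} really are satisfied in the dual case. This is a standard but somewhat technical check. The relative injectivity step and the consequent vanishing are essentially formal once the Ivanov framework is in place.
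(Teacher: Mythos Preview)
Your argument is correct but takes a genuinely different route from the paper. You apply \Cref{thm:mapping-theorem} to $X$ with coefficients $\Linf(G,V)$ (after checking that this module is again dual when $V$ is), reducing to $H_b^k(G;\Linf(G,V))$, and then appeal to the relative injectivity of $\Linf(G,V)$ in the Ivanov framework. The paper instead uses the ``evaluation at $1_G$'' correspondence developed in \Cref{sec:definition}: a $G$-equivariant bounded $\Linf(G,V)$-valued cochain on $\Ucover{X}$ is the same thing as a (not necessarily equivariant) bounded $V$-valued cochain on $\Ucover{X}$, giving directly $H_b^\bullet(X;\Linf(G,V))\cong H_b^\bullet(\Ucover{X};V)$; then \Cref{thm:mapping-theorem} is applied to the simply connected space $\Ucover{X}$ with the \emph{original} coefficients $V$. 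The paper's approach sidesteps the duality verification for $\Linf(G,V)$ that you flag as delicate, and it stays within the paper's recurring theme of trading $\Linf(G,V)$-valued equivariant data for $V$-valued non-equivariant data on the universal cover. Your approach is more algebraic and self-contained within the Ivanov machinery; it has the advantage of making transparent \emph{why} the module $\Linf(G,V)$ is special (it is relatively injective), a fact that the paper's argument leaves implicit.
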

    \begin{proof}
        Let $\Ucover{X}$ be the universal cover of $X$.
        By definition, $H_b^\bullet(X;\Linf(G,V))$ is the cohomology of $C_b^\bullet(\Ucover{X};\Linf(G,V))^G$.
        As described in \Cref{sec:definition}, the evaluation at the identity of $G$ identifies $\Linf(G,V)$-valued cochains with $V$-valued (not necessarily $G$-equivariant) cochains.
        In particular, uniformly bounded $\Linf(G,V)$-valued cochains correspond to uniformly (not only orbit-wise) bounded $V$-valued cochains.
        Therefore, $H_b^\bullet(X;\Linf(G,V))$ is isomorphic to $H_b^\bullet(\Ucover{X};V)$.
        By \Cref{thm:mapping-theorem}, the bounded cohomology of $\Ucover{X}$ with coefficients in $V$ is trivial, since it is isomorphic to the bounded cohomology of the trivial group.
    \end{proof}
    
    \begin{corollary}
        \label{cor:null-composition}
        Let $X, G$ and $V$ be as in \Cref{thm:mapping-theorem}.
        Then the composition
        \[\begin{tikzcd}
            H_b^k(X;V) \ar[r,"c^k"] & H^k(X;V) \ar[r,"\iota^k"] & H_\Linfs^k(X;V)
        \end{tikzcd}\]
        is the zero map for every $k \ge 1$.
        In particular, for every group $G$, every normed $\R[G]$-module $V$ and every $k \ge 1$ the composition
        \[\begin{tikzcd}
            H_b^k(G;V) \ar[r,"c^k"] & H^k(G;V) \ar[r,"\iota^k"] & H_\Linfs^k(G;V)
        \end{tikzcd}\]
        is the zero map.
    \end{corollary}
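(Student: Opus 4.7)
The plan is to reduce the statement to \Cref{prop:bounded_and_linf} by exploiting the naturality of the comparison map $c^\bullet$ in the coefficient module. The inclusion $\iota: V \to \Linf(G,V)$ sending a vector to the corresponding constant function is an isometric $G$-equivariant embedding of normed $\R[G]$-modules, hence it induces morphisms on both ordinary and bounded cohomology, and these fit into the naturality square
\[
\begin{tikzcd}
H_b^k(X;V) \ar[r,"c^k"] \ar[d,"\iota_b^k"'] & H^k(X;V) \ar[d,"\iota^k"] \\
H_b^k(X;\Linf(G,V)) \ar[r,"c^k"] & H^k(X;\Linf(G,V)).
\end{tikzcd}
\]
The right-hand vertical map is exactly the comparison map from ordinary cohomology to \Lcoh{} under the identification $H^k(X;\Linf(G,V)) = H_\Linfs^k(X;V)$ of \Cref{def:linfty-topological}.

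The first step is to verify that this square commutes, which is immediate at the level of cochain complexes: post-composing cochains with $\iota$ commutes with both inclusion of bounded cochains into all cochains and with pulling classes to $G$-equivariant ones. The second step is to read off the composition $\iota^k \circ c^k$ as going around the square either way: going down then right gives $c^k \circ \iota_b^k$, which factors through $H_b^k(X;\Linf(G,V))$.

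By \Cref{prop:bounded_and_linf}, this intermediate group vanishes for $k \ge 1$, so the composition is zero, proving the first statement. For the group-level statement, one can either apply the topological version to $X = K(G,1)$ (which has contractible universal cover, hence satisfies the hypotheses of \Cref{thm:mapping-theorem} unconditionally on $V$), or repeat the same naturality argument directly on the homogeneous bar complexes using the analogue of \Cref{prop:bounded_and_linf} for groups (which is in fact subsumed by the topological case).

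I do not anticipate a main obstacle: the argument is essentially diagram-chasing, and all the non-trivial content is already packaged into \Cref{prop:bounded_and_linf}. The only point requiring a moment of care is checking that the identification $H^k(X;\Linf(G,V)) = H_\Linfs^k(X;V)$ is compatible with the maps induced by $\iota$, but this is built into the definition of $\iota^\bullet$ given just above the corollary.
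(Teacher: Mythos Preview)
Your proposal is correct and matches the paper's proof essentially verbatim: the paper also observes that the composition factors as $H_b^k(X;V) \to H_b^k(X;\Linf(G,V)) \to H^k(X;\Linf(G,V))$ and then invokes \Cref{prop:bounded_and_linf} to kill the middle term. The only cosmetic difference is that you phrase this factorization as a commutative naturality square, whereas the paper states it in one line.
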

    \begin{proof}
        The composition in the statement also factors as 
        \[\begin{tikzcd}
            H_b^k(X;V) \ar[r] & H_b^k(X;\Linf(G,V)) \ar[r,"c^k"] & H^k(X;\Linf(G,V)),
        \end{tikzcd}\]
        where the first map is induced by $\iota:V \to \Linf(G,V)$ and the second map is the comparison map from bounded to ordinary cohomology.
        By \Cref{prop:bounded_and_linf}, the central term is trivial.
    \end{proof}
    
    Other proofs for the second assertion of \Cref{cor:null-composition} can be found in \cite{Gersten1992}, \cite{Wienhard2012} and \cite{Blank2015}.
    In \cite{Kedra2009} there is a proof of the first assertion in the language of bounded de Rham cohomology: $X$ is a closed Riemannian manifold, $V = \R$, and the bounded de Rham cohomology of $\Ucover{X}$ takes the place of $H_\Linfs^k(X;V)$ as the last term of the sequence.

    \section{Characterization of amenable groups}
    \label{sec:amenability}
    
    We start by recalling the definition of amenable groups.
    
    \begin{definition}
        \label{def:amenable}
        Let $G$ be a group.
        A left-invariant mean on $G$ is a linear map $m: \Linf(G, \R) \to \R$ satisfying the following properties:
        \begin{enumerate}[label=(\arabic*)]
            \item \label{it:amenable-1} If $f_c:G\to \R$ is the constant function with value $c$, then $m(f_c) = c$;
            \item \label{it:amenable-2} If $f \in \Linf(G, \R)$ is non-negative, then $m(f) \ge 0$;
            \item \label{it:amenable-3} $m(g\cdot f) = m(f)$ for every $f \in \Linf(G, \R)$ and $g \in G$.
        \end{enumerate}
        The group $G$ is amenable if it admits a left-invariant mean.
    \end{definition}
    
    If $m: \Linf(G, \R) \to \R$ is a left-invariant mean, then it is continuous: properties \ref{it:amenable-1} and \ref{it:amenable-2} easily imply that $\inf f \le m(f) \le \sup f$ for any $f \in \Linf(G, \R)$, and in particular $\abs{m(f)} \le \norm{f}$.
    
    Let $W$ be a normed $\R[G]$-module, and let
    $V = W'= \{v:W \to \R\ \text{continuous}\allowbreak\ \text{linear}\allowbreak\ \text{map}\}$
    its topological dual, which is again a normed $\R[G]$-module.
    The action of $G$ on $V$ is the usual one:
    \begin{align*}
        (g\cdot v)(w) = v(g^{-1}\cdot w) & & \text{for every } g \in G, v \in V \text{ and } w \in W.
    \end{align*}
    
    If $G$ is amenable, the functional $\mu$ appearing in the following lemma can be interpreted as a $G$-equivariant averaging of bounded $V$-valued functions defined on $G$.
    
    \begin{lemma} \label{lemma:mean_on_dualspace}
        Let $G$ be an amenable group and $V = W'$ a dual normed $\R[G]$-module.
        Then there is a linear map $\mu: \Linf(G, V) \to V$ satisfying the following properties:
        \begin{enumerate}[label=(\arabic*)]
            \item \label{it:mean-1} If $f_v:G\to V$ is the constant function with value $v$, then $\mu(f_v) = v$;
            \item \label{it:mean-2} $\norm{\mu(f)} \le \norm{f}$ for every $f \in \Linf(G, V)$;
            \item \label{it:mean-3} $\mu(g\cdot f) = g \cdot \mu(f)$ for every $g \in G$ and $f \in \Linf(G, V)$.
        \end{enumerate}
        In other words, $\mu$ is a norm-nonincreasing and $G$-equivariant retraction for the inclusion of $V$ in $\Linf(G, V)$.
    \end{lemma}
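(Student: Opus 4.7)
The plan is to construct $\mu$ by combining the left-invariant mean $m:\Linf(G,\R)\to\R$ guaranteed by amenability with the duality between $V$ and $W$. More precisely, for every $f \in \Linf(G,V)$ and every $w \in W$, I would first observe that the scalar function $\varphi_{f,w}:G\to\R$ defined by $\varphi_{f,w}(g) = f(g)(w)$ is bounded: the inequality $\abs{f(g)(w)} \le \norm{f(g)}\cdot\norm{w} \le \norm{f}\cdot\norm{w}$ shows that $\varphi_{f,w} \in \Linf(G,\R)$. This allows us to define $\mu(f)\in V=W'$ by the formula
\[ \mu(f)(w)\ =\ m(\varphi_{f,w})\ =\ m\bigl(g \mapsto f(g)(w)\bigr). \]
The same inequality together with the continuity of $m$ gives $\abs{\mu(f)(w)}\le \norm{f}\cdot\norm{w}$, so $\mu(f)$ is indeed a continuous linear functional on $W$ with $\norm{\mu(f)}\le\norm{f}$, proving property \ref{it:mean-2}. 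Linearity of $\mu$ follows from linearity of $m$ and of evaluation in $W$.

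For property \ref{it:mean-1}, if $f_v$ is constant with value $v$, then $\varphi_{f_v,w}$ is the constant function with value $v(w)$, so $m(\varphi_{f_v,w})=v(w)$ by property \ref{it:amenable-1} of the mean; hence $\mu(f_v)=v$.

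Property \ref{it:mean-3} is where a little bookkeeping is needed but should still be routine. Unwinding the definitions, $(g\cdot f)(h)(w)=\bigl(g\cdot f(g^{-1}h)\bigr)(w)=f(g^{-1}h)(g^{-1}\cdot w)$, so
\[ \mu(g\cdot f)(w)\ =\ m\bigl(h \mapsto f(g^{-1}h)(g^{-1}\cdot w)\bigr). \]
On the other hand,
\[ \bigl(g\cdot \mu(f)\bigr)(w)\ =\ \mu(f)(g^{-1}\cdot w)\ =\ m\bigl(h \mapsto f(h)(g^{-1}\cdot w)\bigr). \]
Setting $\psi(h)=f(h)(g^{-1}\cdot w)\in\Linf(G,\R)$, the first expression equals $m(g\cdot\psi)$ and the second equals $m(\psi)$, so the two are equal by left-invariance of $m$ (property \ref{it:amenable-3}).

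I do not expect a serious obstacle here: the argument is the standard trick of transferring a scalar-valued mean to a vector-valued one through the predual. The only subtlety worth being careful about is making sure that $\mu(f)$ really lives in $V=W'$ (not just in the algebraic dual of $W$), which is why continuity of $m$ is used to control the norm. A weak-$*$ compactness argument (averaging translates of $f$ and extracting a limit in $V$) would give an alternative construction, but the direct definition above is cleaner.
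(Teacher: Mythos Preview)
Your argument is correct and is essentially identical to the paper's proof: the paper packages your $\varphi_{f,w}$ into a map $\Ev_f:W\to\Linf(G,\R)$, $\Ev_f(w)(g)=f(g)(w)$, and sets $\mu(f)=m\circ\Ev_f$, then checks the three properties exactly as you do. The verification of equivariance in the paper goes through the identity $\Ev_{g\cdot f}(w)=g\cdot\Ev_f(g^{-1}w)$, which is the same computation you carry out with $\psi$.
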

    \begin{proof}
        For any $f \in \Linf(G,V)$ denote by $\Ev_f:W \to \Linf(G,\R)$ the function defined as $\Ev_f(w)(g) = f(g)(w)$ for every $w \in W$ and $g \in G$.
        The function $\Ev_f$ is linear and continuous, with operator norm $\norm{\Ev_f} \le \norm{f}$.
        Moreover, if $g \in G$ and $w \in W$ then
        \begin{align*}
             \Ev_{g\cdot f}(w)(h) &= (g\cdot f)(h)(w) = (g \cdot f(g^{-1}h))(w) = f(g^{-1}h)(g^{-1}w) \\
             &= \Ev_f (g^{-1}w)(g^{-1}h) = (g \cdot \Ev_f(g^{-1}w))(h)
        \end{align*}
        for every $h \in G$.
        That is, $\Ev_{g\cdot f}(w) = g \cdot \Ev_f(g^{-1}w)$.
        
        Let $m:\Linf(G,\R) \to \R$ be a left-invariant mean.
        The composition $m \circ \Ev_f:W \to \R$ is linear and continuous, so it gives an element of $V$.
        We define $\mu(f) = m \circ \Ev_f$.
        The function $\mu$ is linear and satisfies properties \ref{it:mean-1} and \ref{it:mean-2} of the statement.
        Moreover, if $f \in \Linf(G,V)$ and $g \in G$ then
        \begin{align*}
            \mu(g\cdot f)(w) &= m(\Ev_{g\cdot f}(w)) = m(g \cdot \Ev_f(g^{-1}w)) \\
            &= m(\Ev_f(g^{-1}w)) = \mu(f)(g^{-1}w) = (g\cdot \mu(f))(w)
        \end{align*}
        for every $w \in W$, and this proves property \ref{it:mean-3}.
    \end{proof}
    
    The following theorem gives the characterization of amenability already stated in the introduction.
    
    \begin{theorem} \label{thm:amenability_via_linfty_cohomology}
        For any group $G$ the following conditions are equivalent:
        \begin{enumerate}[label=(\roman*)]
            \item \label{cdt:1} $G$ is amenable;
            \item \label{cdt:2} The comparison map $\iota^1:H^1(G;V) \to H^1_\Linfs(G;V)$ is injective for every dual normed $\R[G]$-module $V$;
            \item \label{cdt:3} The comparison map $\iota^k:H^k(G;V) \to H^k_\Linfs(G;V)$ is injective for every dual normed $\R[G]$-module $V$ and every $k \in \N$.
        \end{enumerate}
    \end{theorem}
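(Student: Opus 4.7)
The implications (iii) $\Rightarrow$ (ii) and (i) $\Rightarrow$ (iii) are the easy ones. The first is tautological. For (i) $\Rightarrow$ (iii), I will apply \Cref{lemma:mean_on_dualspace}: for amenable $G$ and dual $V$, the averaging $\mu\colon \Linf(G,V)\to V$ is a $G$-equivariant linear retraction of the inclusion $\iota\colon V\hookrightarrow \Linf(G,V)$. Consequently $\iota$ is a split monomorphism of $\R[G]$-modules, so on cohomology the induced map $\iota^k$ is a split injection for every $k\in\N$.

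The core of the proof is (ii) $\Rightarrow$ (i). My plan is to reduce it to \Cref{thm:bounded_characterizes_amenable} (Johnson's theorem) by establishing, for every normed $\R[G]$-module $V$, the canonical identification
\begin{equation*}
    \ker\bigl(\iota^1\colon H^1(G;V)\to H^1_\Linfs(G;V)\bigr)\ \cong\ H_b^1(G;V).
\end{equation*}
Granted this, hypothesis (ii) forces $H_b^1(G;V)=0$ for every dual $V$, and Johnson's theorem then implies that $G$ is amenable.

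To establish this identification I work with inhomogeneous cocycles: a class in $H^1(G;V)$ is represented by a crossed homomorphism $D\colon G\to V$, and $\iota^1[D]=0$ amounts to the existence of $f\in\Linf(G,V)$ with $\iota\circ D=\delta f$, i.e.\ $\iota(D(g))=g\cdot f-f$ in $\Linf(G,V)$ for every $g\in G$. Evaluating this identity at $h=g$ yields $D(g)=g\cdot v-f(g)$ with $v:=f(1)$, so the cocycle $D-\delta v$ sends $g$ to $v-f(g)$ and is therefore bounded; hence $[D]$ lies in the image of $c^1\colon H_b^1(G;V)\to H^1(G;V)$. The reverse inclusion $\mathrm{Im}(c^1)\subseteq\ker(\iota^1)$ is exactly \Cref{cor:null-composition}. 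Finally, $c^1$ is always injective in degree $1$, because every coboundary $\delta v(g)=g\cdot v-v$ already satisfies $\norm{\delta v(g)}\le 2\norm{v}$, so the bounded $1$-coboundaries and the ordinary $1$-coboundaries of $V$ coincide, and the inclusion of bounded into ordinary cochains descends to an injection on $H^1$.

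The main technical obstacle is the identification $\ker(\iota^1)\cong H_b^1(G;V)$; once it is in hand, the theorem follows formally from Johnson's theorem and \Cref{lemma:mean_on_dualspace}, neither of which needs to be revisited.
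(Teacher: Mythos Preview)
Your argument is correct. The treatment of (i)$\Rightarrow$(iii) and (iii)$\Rightarrow$(ii) coincides with the paper's. For (ii)$\Rightarrow$(i) the paper takes a more direct route: it writes down the Johnson class $[J]\in H^1(G;V)$ for the specific module $V=(\Linf(G,\R)/\R)'$, exhibits an explicit bounded $0$-cochain $\alpha$ with $\Cbd\alpha=J$ (so $\iota^1[J]=0$), deduces from (ii) that $J$ has a $G$-\emph{equivariant} primitive $\beta$, and then assembles a left-invariant functional on $\Linf(G,\R)$ directly from $\beta$. Your approach instead reduces to Johnson's characterization (\Cref{thm:bounded_characterizes_amenable}) via the identification $\ker(\iota^1)\cong H_b^1(G;V)$.

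The paper actually sketches your alternative in the remark immediately following its proof, observing that only the inclusion $\mathrm{Im}(c^1)\subseteq\ker(\iota^1)$ (from \Cref{cor:null-composition}) together with the injectivity of $c^1$ is needed: hypothesis (ii) then forces $\mathrm{Im}(c^1)=0$, hence $H_b^1(G;V)=0$. Your reverse inclusion $\ker(\iota^1)\subseteq\mathrm{Im}(c^1)$, i.e.\ exactness of $H_b^1\to H^1\to H_\Linfs^1$ in degree~$1$, is correct but stronger than what the theorem requires. The paper's direct construction is self-contained (it does not invoke \Cref{thm:bounded_characterizes_amenable}); your route makes the parallel between the two characterizations of amenability more transparent and yields, as a byproduct, degree-$1$ exactness of the sequence~\eqref{eq:sequence} for arbitrary normed coefficients.
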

    \begin{proof}
        To prove the implication \ref{cdt:1} $\implies$ \ref{cdt:3}, fix a dual normed $\R[G]$-module $V$ and let $k \in \N$.
        \Cref{lemma:mean_on_dualspace} gives a map of $\R[G]$-modules $\mu:\Linf(G, V) \to V$ which is a left-inverse of the inclusion $\iota:V \to \Linf(G, V)$.
        The composition 
        \[\begin{tikzcd}
             H^k(G;V) \ar[r,"\iota^k"] & H_\Linfs^k(G;V) \ar[r,"\mu^k"] & H^k(G;V)
        \end{tikzcd}\]
        of the maps induced by $\iota$ and $\mu$ in cohomology is the identity.
        In particular, the first map must be injective, and this proves \ref{cdt:3}.
        
        To prove the implication \ref{cdt:2} $\implies$ \ref{cdt:1}, consider the dual $\R[G]$-module $V = (\Linf(G,\R)/\R)'$.
        Let $J \in C^1(G;V)$ be defined as
        $J(g_0, g_1)[f] = f(g_0) - f(g_1)$
        for every $g_0,g_1 \in G$ and $f \in \Linf(G,\R)$,
        where $[f]$ denotes the class of $f$ in the quotient $\Linf(G,\R)/\R$.
        It is easy to see that $J$ is a $G$-equivariant cocycle, so it defines a class $[J] \in H^1(G;V)$, called the Johnson class of $G$.
        
        We now prove that the comparison map sends $[J]$ to $0 \in H_\Linfs^1(G;V)$.
        Define a cochain $\alpha \in C^0(G;V)$ as follows:
        $\alpha(g)[f] = f(g) - f(1_G)$ for every $g \in G$ and $f \in \Linf(G,\R)$.
        Notice that $\abs{\alpha(g)[f]} \le 2 \cdot \norm{f}$.
        Taking the infimum as $f$ varies among the representatives of a class in $\Linf(G,\R)/\R$, we deduce that $\abs{\alpha(g)[f]} \le 2 \cdot \norm{[f]}$.
        Therefore, $\norm{\alpha(g)} \le 2$, and this means that $\alpha$ is bounded.
        In particular it is bounded on orbits.
        Moreover, it is clear that $\alpha$ is a primitive of $J$.
        Since $J$ has a primitive bounded on orbits, the comparison map sends $[J]$ to $0$, as we claimed.
        
        Now, condition \ref{cdt:2} implies that $[J] = 0 \in H^1(G;V)$, \ie, $J$ has a $G$-equivariant primitive $\beta \in C^0(G;V)^G$.
        Define $m:\Linf(G,\R) \to \R$ setting $m(f) = f(1_G) - \beta(1_G)[f]$ for every $f \in \Linf(G,\R)$.
        It is clear that $m$ is linear and continuous.
        Moreover, if $f_c$ is the constant function with value $c \in \R$, then $m(f_c) = c$.
        Let us check that $m$ is left-invariant.
        Fix $g \in G$ and $f \in \Linf(G,V)$.
        Then
        \begin{align*}
            m(g\cdot f)\ &=\ (g \cdot f)(1_G) - \beta(1_G)[g\cdot f] \\
            &=\ f(g^{-1}) - (g^{-1} \cdot \beta(1_G))[f] \\
            &=\ f(g^{-1}) - \beta(g^{-1})[f],
        \end{align*}
        where in the last equality we used the $G$-equivariance of $\beta$.
        Now
        \begin{align*}
            m(g\cdot f) - m(f)\ &=\ f(g^{-1}) - \beta(g^{-1})[f] - f(1_G) + \beta(1_G)[f] \\
            &=\ f(g^{-1}) - f(1_G) - J(1_G,g^{-1})[f]\ =\ 0.
        \end{align*}
        Therefore, $m$ is left-invariant.
        In general, $m$ does not satisfy property \ref{it:amenable-2} of \Cref{def:amenable}.
        However, properties \ref{it:amenable-1} and \ref{it:amenable-3} of \Cref{def:amenable}, together with the continuity of $m$, imply that $G$ is amenable anyway (see, \eg, \cite[Lemma 3.2]{Frigerio2017}).
        The remaining implication \ref{cdt:3} $\implies$ \ref{cdt:2} is clearly true.
    \end{proof}
    
    The statement of \Cref{thm:amenability_via_linfty_cohomology} is in some way dual to the following well-known characterization of amenable groups via bounded cohomology.
    For a proof of it, see, \eg, \cite{Frigerio2017}.
    \begin{theorem} \label{thm:amenability_via_bounded_cohomology}
        For any group $G$ the following conditions are equivalent:
        \begin{enumerate}[label=(\roman*)]
            \item $G$ is amenable;
            \item $H_b^1(G;V) = 0$ for every dual normed $\R[G]$-module $V$;
            \item $H_b^k(G;V) = 0$ for every dual normed $\R[G]$-module $V$ and every $k \ge 1$.
        \end{enumerate}
    \end{theorem}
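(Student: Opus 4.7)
The plan is to mirror the structure of the proof of Theorem \ref{thm:amenability_via_linfty_cohomology}, re-using the averaging operator $\mu:\Linf(G,V)\to V$ provided by Lemma \ref{lemma:mean_on_dualspace}. The implication (iii)$\Rightarrow$(ii) is trivial, so the work is in (i)$\Rightarrow$(iii) and (ii)$\Rightarrow$(i).

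For (i)$\Rightarrow$(iii), I would start from a bounded $G$-equivariant cocycle $\alpha \in C_b^k(G;V)^G$ with $k \ge 1$ and manufacture an explicit bounded $G$-equivariant primitive. Since $\alpha$ is uniformly bounded, for every fixed $(g_0,\ldots,g_{k-1})$ the function $g \mapsto \alpha(g,g_0,\ldots,g_{k-1})$ lies in $\Linf(G,V)$, so one may set
\[
\beta(g_0,\ldots,g_{k-1}) \;=\; \mu\bigl(g \mapsto \alpha(g,g_0,\ldots,g_{k-1})\bigr).
\]
Three checks remain: (a) $\beta$ is bounded, since $\mu$ is norm-non-increasing; (b) $\beta$ is $G$-equivariant, from property \ref{it:mean-3} of Lemma \ref{lemma:mean_on_dualspace} together with the equivariance of $\alpha$ (after the substitution $g \mapsto hg$); (c) $d\beta = \alpha$, which follows by expanding the cocycle identity $d\alpha(g,g_0,\ldots,g_k)=0$ as
\[
\alpha(g_0,\ldots,g_k) \;=\; \sum_{i=0}^{k}(-1)^i \alpha(g,g_0,\ldots,\hat g_i,\ldots,g_k),
\]
applying $\mu_g$ to both sides and using property \ref{it:mean-1} of Lemma \ref{lemma:mean_on_dualspace} to note that the left-hand side is constant in $g$.

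For (ii)$\Rightarrow$(i), I would recycle almost verbatim the argument given in the proof of Theorem \ref{thm:amenability_via_linfty_cohomology}. Take $V=(\Linf(G,\R)/\R)'$ and the Johnson cocycle $J(g_0,g_1)[f]=f(g_0)-f(g_1)$; the same computation as before yields $\abs{J(g_0,g_1)[f]}\le 2\norm{[f]}$, so $J$ is uniformly bounded and hence represents a class in $H_b^1(G;V)$. Hypothesis (ii) forces $[J]=0$ in $H_b^1$, producing a bounded $G$-equivariant primitive $\beta\in C_b^0(G;V)^G$. Defining $m:\Linf(G,\R)\to\R$ by $m(f)=f(1_G)-\beta(1_G)[f]$, linearity, continuity, normalization and left-invariance are verified by exactly the same manipulations appearing in the proof of Theorem \ref{thm:amenability_via_linfty_cohomology}, and amenability is then deduced from \cite[Lemma 3.2]{Frigerio2017}.

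The only delicate point is the equivariance check in step (b) above: one must carefully identify the function $g \mapsto \alpha(g,hg_0,\ldots,hg_{k-1})$ as the $G$-translate $h\cdot \tilde f$ of $\tilde f(g)=\alpha(g,g_0,\ldots,g_{k-1})$ in $\Linf(G,V)$. Everything else is bookkeeping; the substantive content is entirely packaged inside Lemma \ref{lemma:mean_on_dualspace}. In particular, no new ideas beyond those already deployed for the $\Linf$-cohomology characterization are needed, and the proof is strictly parallel (and, because the averaging $\mu$ lands \emph{inside} $V$ rather than in $\Linf(G,V)$, somewhat cleaner).
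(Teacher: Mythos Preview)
Your proof is correct and follows the standard argument (essentially Johnson's original one). Note, however, that the paper does \emph{not} give its own proof of \Cref{thm:amenability_via_bounded_cohomology}: it is stated as a well-known result with a reference to \cite{Frigerio2017}. So there is no ``paper's proof'' to compare against in the strict sense.

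That said, your approach aligns perfectly with what the paper anticipates. The remark immediately following \Cref{thm:amenability_via_bounded_cohomology} explicitly observes that the implication (ii)$\Rightarrow$(i) in both \Cref{thm:amenability_via_bounded_cohomology} and \Cref{thm:amenability_via_linfty_cohomology} ``follow very similar lines: in both cases the Johnson class in $H^1(G;V)$ is shown to be null (for different reasons) and the rest of the proof is identical.'' This is precisely what you do: the Johnson cocycle $J$ is itself bounded, so (ii) kills it directly in $H_b^1$, whereas in the $\Linf$-setting one first passes through the comparison map. Your use of the averaging operator $\mu$ from \Cref{lemma:mean_on_dualspace} for (i)$\Rightarrow$(iii) is also the natural adaptation, giving a contracting homotopy on the bounded homogeneous complex rather than a retraction of coefficient modules as in \Cref{thm:amenability_via_linfty_cohomology}. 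All three checks (boundedness, equivariance, $\Cbd\beta=\alpha$) are carried out correctly.
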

    
    \begin{remark}
        Assuming \Cref{thm:amenability_via_bounded_cohomology}, the implication \ref{cdt:2} $\implies$ \ref{cdt:1} of \Cref{thm:amenability_via_linfty_cohomology} also follows very easily from the following general facts, which are true for any group $G$ and any normed $\R[G]$-module $V$:
        \begin{itemize}
            \item The composition $H^1_b(G;V) \to H^1(G;V) \to H^1_\Linfs(G;V)$ of the comparison maps is the zero map, by \Cref{cor:null-composition};
            \item The map $H^1_b(G;V) \to H^1(G;V)$ is injective.
        \end{itemize}
        Actually, the proofs of the implication \ref{cdt:2} $\implies$ \ref{cdt:1} in \Cref{thm:amenability_via_bounded_cohomology} and \Cref{thm:amenability_via_linfty_cohomology} follow very similar lines: in both cases the Johnson class in $H^1(G;V)$ is shown to be null (for different reasons) and the rest of the proof is identical.
        
    \end{remark}

    \section{Bounded primitives and linear isoperimetric inequalities}
    \label{sec:isoperimetric}
    
    Let $X$ be a connected \CW{} and let $\Ucover{X}$ be its universal cover, equipped with the cellular structure induced from $X$.
    As usual, the fundamental group $G = \pi_1(X,x_0)$ acts on $\Ucover{X}$ via covering transformations.
    In this section, chain and cochains will be cellular and real-valued.
    
    For every $k \ge 0$ we endow the vector space of cellular $k$-chains $C_k(\Ucover{X};\R)$ with the $\ell^1$-norm $\norm{\Ph}_1:C_k(\Ucover{X},\R) \to \R$, assigning to any chain the sum of the absolute values of its coefficients with respect to the basis of $k$-cells.
    
    The following theorem relates the vanishing of a class in $H_\Linfs^k(X;\R)$ to isoperimetric inequalities satisfied by its representatives.
    The proof is an application of the Hahn-Banach theorem.
    
    \begin{theorem}
        \label{thm:isoperimetric_cochain}
        Let $X$ be a connected \CW{} with universal cover $\Ucover{X}$.
        Let $\alpha \in C^k(\Ucover{X},\R)$ be a cellular cocycle.
        Suppose that there is a real number $\Lambda \ge 0$ such that
        \begin{equation}
            \label{eq:isoperimetric_cochain}
            \abs{\alpha(c)}\ \le\ \Lambda \cdot \norm{\Bd c}_1
        \end{equation}
        for every cellular chain $c \in C_k(\Ucover{X}, \R)$.
        Then $\alpha$ is bounded on orbits and $[\alpha] = 0 \in H_\Linfs^k(X;\R)$.
        Moreover, if the $(k-1)$-skeleton of $X$ is finite, then the converse implication also holds.
    \end{theorem}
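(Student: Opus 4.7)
The proof splits into a forward direction (isoperimetric inequality $\Rightarrow$ bounded on orbits and cohomologically trivial) and a converse (trivial class $\Rightarrow$ isoperimetric inequality, under the finiteness assumption). Throughout, I use that $C_k(\Ucover{X};\R)$ is the free real vector space on the set of $k$-cells, that the $G$-action on $\Ucover{X}$ permutes cells and therefore preserves $\norm{\Ph}_1$, and that $\Bd$ commutes with the $G$-action.

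For the forward direction, assume \eqref{eq:isoperimetric_cochain}. The fact that $\alpha$ is bounded on orbits is immediate: if $\sigma$ is a $k$-cell, then $\abs{\alpha(g \cdot \sigma)} \le \Lambda \norm{\Bd(g\cdot\sigma)}_1 = \Lambda \norm{\Bd\sigma}_1$ for every $g \in G$, so $\alpha$ is bounded on the $G$-orbit of $\sigma$ by a quantity depending only on $\sigma$. To show $[\alpha] = 0$ in $H_\Linfs^k(X;\R)$, the plan is to produce $\beta \in C^{k-1}(\Ucover{X};\R)$ which is \emph{uniformly} bounded (hence bounded on orbits) and satisfies $\Cbd \beta = \alpha$. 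Consider the subspace $B \coloneqq \Bd C_k(\Ucover{X};\R) \subseteq C_{k-1}(\Ucover{X};\R)$ of cellular boundaries. Define $\ell : B \to \R$ by $\ell(\Bd c) = \alpha(c)$. This is well-defined: if $\Bd c = \Bd c'$ then $c - c'$ is a cycle, so $\abs{\alpha(c) - \alpha(c')} \le \Lambda \cdot \norm{\Bd(c-c')}_1 = 0$; it is manifestly linear. The hypothesis \eqref{eq:isoperimetric_cochain} says precisely that $\abs{\ell(b)} \le \Lambda \norm{b}_1$ for every $b \in B$.

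By the Hahn-Banach theorem applied to the normed space $(C_{k-1}(\Ucover{X};\R), \norm{\Ph}_1)$, extend $\ell$ to a linear functional $\widetilde{\ell}:C_{k-1}(\Ucover{X};\R) \to \R$ with $\abs{\widetilde{\ell}(x)} \le \Lambda \norm{x}_1$ for every $x$. Define $\beta \in C^{k-1}(\Ucover{X};\R)$ by setting $\beta(\tau) \coloneqq \widetilde{\ell}(\tau)$ for every $(k-1)$-cell $\tau$; since $\norm{\tau}_1 = 1$, one has $\abs{\beta(\tau)} \le \Lambda$ for every such $\tau$, so $\beta$ is uniformly bounded. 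Finally, for every $k$-cell $\sigma$ one computes $(\Cbd \beta)(\sigma) = \beta(\Bd \sigma) = \widetilde{\ell}(\Bd\sigma) = \ell(\Bd\sigma) = \alpha(\sigma)$, so $\Cbd\beta = \alpha$, and $[\alpha] = 0 \in H_\Linfs^k(X;\R)$.

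For the converse, assume the $(k-1)$-skeleton of $X$ is finite and $[\alpha] = 0$ in $H_\Linfs^k(X;\R)$. Then there exists $\beta \in C^{k-1}(\Ucover{X};\R)$ bounded on orbits with $\Cbd \beta = \alpha$. Finiteness of the $(k-1)$-skeleton of $X$ means there are only finitely many $G$-orbits of $(k-1)$-cells in $\Ucover{X}$; being bounded on each of these finitely many orbits, $\beta$ is then uniformly bounded by some constant $\Lambda$. For any $c \in C_k(\Ucover{X};\R)$, writing $\Bd c = \sum_i a_i \tau_i$ as a finite linear combination of $(k-1)$-cells gives
\[ \abs{\alpha(c)} = \abs{\beta(\Bd c)} \le \sum_i \abs{a_i}\,\abs{\beta(\tau_i)} \le \Lambda \sum_i \abs{a_i} = \Lambda \norm{\Bd c}_1, \]
which is precisely \eqref{eq:isoperimetric_cochain}. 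The main (and essentially only) non-routine step is the use of Hahn-Banach in the forward direction to extend the linear functional from the subspace of boundaries without losing the norm bound; the rest is bookkeeping with the two equivalent descriptions of boundedness on orbits.
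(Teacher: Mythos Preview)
Your proof is correct and follows essentially the same route as the paper's: define a linear functional on the subspace of boundaries via $\ell(\Bd c)=\alpha(c)$, check it is well-defined and bounded with respect to $\norm{\Ph}_1$, extend by Hahn--Banach to a globally bounded primitive, and handle the converse by noting that finiteness of the $(k-1)$-skeleton upgrades ``bounded on orbits'' to ``uniformly bounded''. The only cosmetic difference is that you verify directly that $\alpha$ is bounded on orbits before constructing the primitive, whereas the paper obtains this a posteriori from the fact that the coboundary of a bounded cochain is bounded on orbits.
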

    In the first part of the statement, the norm $\norm{\cdot}_1$ on $C_{k-1}(\Ucover{X};\R)$ may be replaced by any $\pi_1(X)$-invariant seminorm, and the following proof would remain valid without any changes.
    \begin{proof}
        The goal is to show that $\alpha$ admits a primitive bounded on orbits.
        Denote by $B_{k-1}(\Ucover{X};\R)$ the space of cellular $(k-1)$-boundaries in $\Ucover{X}$.
        Define a linear function $\mu:B_{k-1}(\Ucover{X},\R) \to \R$, setting $\mu(\Bd c) = \alpha(c)$ for every $c \in C_k(\Ucover{X},\R)$.
        The function $\mu$ is well defined: if $c$ and $c'$ are two cellular chains such that $\Bd c = \Bd c'$, then \eqref{eq:isoperimetric_cochain} implies that
        \[ \abs{\alpha(c) - \alpha(c')}\ =\ \abs{\alpha(c-c')}\ \le\  \Lambda \cdot \norm{\Bd(c-c')}_1\ =\ 0, \]
        so $\alpha(c) = \alpha(c')$.
        Moreover, $\abs{\mu(b)} \le \Lambda \cdot \norm{b}_1$ for every cellular boundary $b \in B_{k-1}(\Ucover{X},\R)$.
        The Hahn-Banach theorem allows us to extend $\mu$ to the whole space of cellular chains $C_{k-1}(\Ucover{X},\R)$ in such a way that
        \begin{equation}
            \label{eq:bounded_norm_primitive}
            \abs{\mu(x)}\ \le\ \Lambda \cdot \norm{x}_1
        \end{equation}
        for every $x \in C_{k-1}(\Ucover{X},\R)$.
        We have $\Cbd\mu = \alpha$ by construction.
        The fact that $\mu$ is bounded on orbits (actually, it is globally bounded) follows from \eqref{eq:bounded_norm_primitive}.
        
        For the opposite direction, suppose that the $(k-1)$-skeleton of $X$ is finite and that $\alpha = \Cbd \mu$ with $\mu \in C^{k-1}(\Ucover{X},\R)$ bounded on orbits.
        Each orbit of $(k-1)$-cells in $\Ucover{X}$ corresponds to a $(k-1)$-cell in $X$, so they are in a finite number.
        Therefore, the values that $\mu$ assigns to the cells of $\Ucover{X}$ are bounded in absolute value by a real number $\Lambda \ge 0$.
        This implies the inequality
        $\abs{\mu(x)} \le \Lambda \cdot \norm{x}_1$
        for every cellular chain $x \in C_{k-1}(\Ucover{X},\R)$.
        Now
        $\abs{\alpha(c)} = \abs{\mu(\Bd c)} \le \Lambda \cdot \norm{\Bd c}_1$
        for every $c \in C_k(\Ucover{X},\R)$, and the proof is complete.
    \end{proof}
    
    The case in which the $k$-skeleton of $X$ is finite (which is not sufficient for the application by Frigerio and Sisto mentioned in the introduction) is a particular instance of a result by Manin \cite[Theorem 5.1 and Corollary 5.2]{Manin2016}, who considers cochains with coefficients in a finite-dimensional vector space endowed with a (possibly) different polyhedral norm for every $(k-1)$-cell of $\Ucover{X}$, and the isoperimetric inequality as well as the boundedness of cochains are defined accordingly.
    
    We now describe some applications of \Cref{thm:isoperimetric_cochain}.
    
    \begin{example}
        Let $G$ be a hyperbolic group and let $X$ be a $K(G,1)$ with finite $2$-skeleton.
        Then $\Ucover{X}$ satisfies a linear isoperimetric inequality for cellular $1$-boundaries (see \cite[Theorem A.1]{Gersten1998}): there is a real number $L \ge 0$ such that $\inf\{\norm{c}_1 \mid \Bd c = b\} \le L \cdot \norm{b}_1$ for every $b \in B_1(\Ucover{X};\R)$.
        This easily implies that every cocycle $\alpha \in C_\Linfs^2(\Ucover{X};\R)$ satisfies \eqref{eq:isoperimetric_cochain} for some $\Lambda \ge 0$.
        Therefore, $H_\Linfs^2(G;\R) = H_\Linfs^2(X;\R) = 0$.
        This is a particular case of \Cref{thm:intro_linf_characterizes_hyperbolic} and was established by Gersten in \cite{Gersten1998,Gersten1996b}.
    \end{example}
    
    \begin{example}
        \label{example:manifold_amenable}
        Let $M$ be a closed connected manifold of dimension $n$, with a fixed triangulation.
        Then $H_\Linfs^n(M,\R) \neq 0$ if and only if the fundamental group of $M$ is amenable.
        Variations of this result are proved in \cite[Remark following Theorem 3.1]{BW1992}, \cite{Sikorav2001} and \cite[Theorem 3.4]{BK2008}; we sketch a simple proof using \Cref{thm:isoperimetric_cochain}.
        
        If $H_\Linfs^n(M,\R) \neq 0$, then the quantity
        \[\inf\left\{\frac{\norm{\Bd{c}}_1}{\norm{c}_1}\ \middle|\ c\text{ is a nontrivial cellular }n\text{-chain in }\Ucover{M}\right\}\]
        must be $0$, otherwise every cocycle $\alpha \in C_\Linfs^n(\Ucover{M};\R)$ would satisfy the assumptions of \Cref{thm:isoperimetric_cochain} and the \Lcoh{} in degree $n$ would vanish.
        This easily implies the F\o{}lner condition for (a Cayley graph of) the fundamental group of $M$ \cite[Definition 18.2]{DK2018}, which is equivalent to the amenability of the fundamental group.
        
        On the other hand, if the fundamental group is amenable, then the comparison map $H^n(M;\R) \to H_\Linfs^n(M;\R)$ is injective; this follows by the same argument used in the proof of the implication \ref{cdt:1} $\implies$ \ref{cdt:2} of \Cref{thm:amenability_via_linfty_cohomology}.
        In particular, if $M$ is orientable, then $H_\Linfs^n(M;\R) \neq 0$.
        If $M$ is not orientable, we reach the same conclusion by passing to the oriented double cover, whose \Lcoh{} is naturally isomorphic to that of $M$.
    \end{example}
    
    \begin{example}
        As mentioned in the introduction, Frigerio and Sisto \cite{Frigerio2020} have found a finitely generated group $G$ such that the sequence $H_b^2(G;\R) \to H^2(G;\R) \to H_\Linfs^2(G;\R)$ is not exact.
        They define $G$ by means of an explicit presentation, using a finite number of generators and an infinite sequence of relations $(r_0, r_1, \ldots, r_n, \ldots)$.
        Then they show that this presentation satisfies a weighted linear isoperimetric inequality, with the relation $r_n$ having weight $2n+1$.
        That is, any word $w$ representing the identity can be written as
        \[ w\ =\ \prod_{i=1}^l v_i r_{n_i} v_i^{-1} \]
        for some words $v_i$ and indices $n_i$ with $(2n_1+1) + \cdots + (2n_l+1) \le \abs{w}$, where $\abs{w}$ denotes the length of $w$.
        Moreover, they show that the presentation is aspherical: the $2$-dimensional complex $X$ associated to the presentation is a $K(G,1)$.
        From the weighted isoperimetric inequality of the presentation, it is easy to obtain a weighted linear isoperimetric inequality for $1$-boundaries in $\Ucover{X}$: $\inf\{\norm{c}_\mathrm{w} \mid \Bd c = b\} \le \norm{b}_1$ for every $b \in B_1(\Ucover{X};\R)$, where $\norm{\Ph}_\mathrm{w}$ is the weighted $\ell^1$-norm assigning weight $2n+1$ to the $2$-cells corresponding to the relation $r_n$.
        It is immediate to construct $G$-invariant cocycles $\alpha \in C^2(\Ucover{X};\R)^G$ such that $\abs{\alpha(c)} \le \norm{c}_\mathrm{w}$ for every $c \in C_2(\Ucover{X};\R)$.
        These cocycles define classes in $H^2(G;\R)$ that in \cite{Frigerio2020} are called ``slow'' classes.
        Any such cocycle $\alpha$ satisfies the inequality \eqref{eq:isoperimetric_cochain} with $\Lambda = 1$.
        Recall that, at the cochain level, the comparison map $\iota^2:H^2(X;\R) \to H_\Linfs^2(X;\R)$ is given by the inclusion of $G$-invariant cochains into the space of cochains bounded on orbits.
        In particular the class $\iota^2([\alpha]) \in H_\Linfs^2(X;\R)$ is represented by $\alpha$ itself, and by \Cref{thm:isoperimetric_cochain} this class is the trivial one.
        Frigerio and Sisto show that there are plenty of slow classes that are not bounded, \ie, they do not lie in the image of $c^2:H_b^2(G;\R) \to H^2(G;\R)$.
        Therefore, the sequence $H_b^2(G;\R) \to H^2(G;\R) \to H_\Linfs^2(G;\R)$ is not exact.
    \end{example}

    \section{Characterization of relative hyperbolicity}
    \label{sec:relative_hyperbolicity}
    
    Let $G$ be a group and let $\mathcal{H} = \{H_i\}_{i \in I}$ be a collection of subgroups of $G$.
    We do not exclude repetitions, \ie, we allow $H_i = H_j$ for $i \neq j$.
    In such situations we say that $(G,\mathcal{H})$ is a group pair.
    If $V$ is an $\R[G]$-module, we denote by $H^\bullet(G,\mathcal{H};V)$ the cohomology of $G$ relative to $\mathcal{H}$ with coefficients in $V$, as defined by Bieri and Eckmann in \cite{BE1978}.
    For our purposes, it is not necessary to recall the algebraic definition; it is enough to know the following topological description.
    \begin{proposition}[{\cite{BE1978}}]
        Let $(G,\mathcal{H})$ be a group pair, with $\mathcal{H} = \{H_i\}_{i \in I}$.
        Let $(X,Y)$ be a classifying pair for $(G,\mathcal{H})$, \ie, a pair of topological spaces such that:
        \begin{itemize}
            \item $X$ is a cellular $K(G,1)$ space;
            \item $Y$ is a subcomplex of $X$ and has connected components $\{Y_i\}_{i \in I}$, with $Y_i \neq Y_j$ for indices $i \neq j \in I$;
            \item For every $i\in I$, $Y_i$ is a $K(H_i,1)$ space and the inclusion in $X$ induces, with respect to some (fixed, but omitted from the notation) path joining the respective basepoints, the inclusion of $H_i$ in $G$.
        \end{itemize}
        Let $V$ be an $\R[G]$-module.
        Then $H^\bullet(G,\mathcal{H};V)$ is canonically isomorphic to $H^\bullet(X,Y;V)$, where the latter is the usual cohomology of pairs of spaces.
    \end{proposition}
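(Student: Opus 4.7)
The plan is to compare two long exact sequences via the five lemma. First, I would unpack Bieri and Eckmann's definition: starting from the short exact sequence of $\Z[G]$-modules
\[ 0 \to \Delta \to \bigoplus_{i \in I} \Z[G/H_i] \xrightarrow{\varepsilon} \Z \to 0, \]
with $\varepsilon$ the augmentation sending every coset to $1$, they define $H^\bullet(G,\mathcal{H};V)$ so as to produce the associated long exact $\mathrm{Ext}$-sequence. Using Shapiro's lemma in the form $\mathrm{Ext}^n_{\Z[G]}(\Z[G/H_i],V) \cong H^n(H_i;V)$, this becomes
\[ \cdots \to \prod_{i \in I} H^{n-1}(H_i;V) \to H^n(G,\mathcal{H};V) \to H^n(G;V) \to \prod_{i \in I} H^n(H_i;V) \to \cdots, \]
where the rightmost arrow is the product of the restriction maps induced by the inclusions $H_i \hookrightarrow G$.

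Second, on the topological side, the classical long exact sequence of the pair $(X,Y)$ gives
\[ \cdots \to H^{n-1}(Y;V) \to H^n(X,Y;V) \to H^n(X;V) \to H^n(Y;V) \to \cdots. \]
Since $X$ is a $K(G,1)$, there is a canonical isomorphism $H^\bullet(X;V) \cong H^\bullet(G;V)$; since $Y = \bigsqcup_{i \in I} Y_i$ with each $Y_i$ a $K(H_i,1)$, we have $H^\bullet(Y;V) \cong \prod_{i \in I} H^\bullet(Y_i;V) \cong \prod_{i \in I} H^\bullet(H_i;V)$.

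Third, I would verify that, through these identifications, the topological restriction $H^\bullet(X;V) \to H^\bullet(Y;V)$ corresponds to the product of the algebraic restriction maps. This is most transparent at the chain level: the cellular chain complex of $\Ucover{X}$ is a free $\Z[G]$-resolution of $\Z$, and the preimage of each component $Y_i$ in $\Ucover{X}$ is a disjoint union of $[G:H_i]$ copies of $\Ucover{Y_i}$ permuted transitively by $G$, which is precisely the geometric incarnation of Shapiro's lemma. Once this compatibility is established, the two long exact sequences fit into a commutative ladder in which four out of every five vertical arrows are already isomorphisms, and the five lemma yields $H^n(X,Y;V) \cong H^n(G,\mathcal{H};V)$ in every degree. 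The main obstacle is precisely this naturality check; the cleanest way to carry it out is to realize both long exact sequences as coming from the same short exact sequence of chain complexes, namely the one associated to the inclusion of the cellular chains of the preimage of $Y$ into those of $\Ucover{X}$, which forces the connecting homomorphisms on the two sides to agree by construction.
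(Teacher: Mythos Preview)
The paper does not give its own proof of this proposition: it is stated with the attribution \cite[Theorem 1.3]{BE1978} and used as a black box, so there is nothing to compare against on the paper's side.

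Your sketch is the standard argument and is correct. The Bieri--Eckmann long exact sequence arises by applying $\mathrm{Ext}^\bullet_{\Z[G]}(-,V)$ to the short exact sequence $0 \to \Delta \to \bigoplus_i \Z[G/H_i] \to \Z \to 0$, and Shapiro's lemma converts the middle term into $\prod_i H^\bullet(H_i;V)$ as you say. Your identification of the preimage of $Y_i$ in $\Ucover{X}$ with $\bigsqcup_{G/H_i} \Ucover{Y_i}$ is exactly what makes the cellular chain complex of this preimage a free $\Z[G]$-resolution of $\bigoplus_i \Z[G/H_i]$, so the short exact sequence of cellular chain complexes of the pair $(\Ucover{X},\Ucover{Y})$ is a resolution of the algebraic short exact sequence above; this forces the two long exact sequences, including the connecting maps, to coincide. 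The five lemma then finishes the job. One small point worth making explicit: the identification $H^\bullet(Y;V) \cong \prod_i H^\bullet(H_i;V)$ uses that cohomology of a disjoint union is a product (not a sum), matching the fact that $\mathrm{Ext}$ out of a direct sum is a product; this is relevant when $I$ is infinite.
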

    
    Now, if $V$ is a normed $\R[G]$-module, we define the relative \Lcoh{} as $H_\Linfs^\bullet(G,\mathcal{H};V) = H^\bullet(G,\mathcal{H};\Linf(G,V))$.
    
    Let $(X,Y)$ be a classifying pair for $(G,\mathcal{H})$.
    Let $(\Ucover{X},\Ucover{Y})$ be the universal cover of $(X,Y)$, meaning that $\Ucover{X}$ is the universal cover of $X$ and $\Ucover{Y}$ is the preimage of $Y$ in $\Ucover{X}$.
    The complex $C^\bullet(\Ucover{X},\Ucover{Y};V)$ of relative cochains, \ie, cochains vanishing on $\Ucover{Y}$, has a subcomplex $C_\Linfs^\bullet(\Ucover{X},\Ucover{Y};V)$ of cochains bounded on orbits, defined by adapting \Cref{defn:bounded_on_orbits} in the obvious way.
    Then $H_\Linfs^\bullet(G,\mathcal{H};V)$ is canonically isomorphic to the cohomology of this subcomplex.
    
    There is also a relative version of bounded cohomology, introduced by Mineyev and Yaman \cite{MY} (see also \cite{Bla2016, Fra2018}).
    Its topological description is the following: if $V$ is a normed $\R[G]$-module and $(X,Y)$ is a classifying pair for $(G,\mathcal{H})$, then $H_b^\bullet(G,\mathcal{H};V)$ is canonically isomorphic to $H_b^\bullet(X,Y;V)$, where the latter is the cohomology of the complex of $G$-equivariant bounded singular cochains on $\Ucover{X}$ vanishing on $\Ucover{Y}$.
    As in the non-relative setting, the inclusion of bounded cochains into unbounded ones gives the comparison map $c^\bullet:H_b^\bullet(G,\mathcal{H};V) \to H^\bullet(G,\mathcal{H};V)$.
    
    We need the following relative version of \Cref{prop:bounded_and_linf}:
    \begin{proposition}
        \label{prop:bounded_and_linf_relative}
        Let $(G,\mathcal{H})$ be a group pair and $V$ a normed $\R[G]$-module.
        Then $H_b^k(G,\mathcal{H};\Linf(G,V)) = 0$ for every $k \ge 2$.
    \end{proposition}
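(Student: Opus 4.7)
The plan is to generalize the strategy of \Cref{prop:bounded_and_linf} to the relative setting. I would fix a classifying pair $(X, Y)$ for $(G, \mathcal{H})$ and let $(\Ucover{X}, \Ucover{Y})$ be its universal cover, so that $H_b^\bullet(G, \mathcal{H}; \Linf(G, V))$ is computed by the complex $C_b^\bullet(\Ucover{X}, \Ucover{Y}; \Linf(G, V))^G$ of $G$-equivariant bounded singular $\Linf(G, V)$-valued cochains on $\Ucover{X}$ vanishing on $\Ucover{Y}$.

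The first key step is to transfer the evaluation-at-$1_G$ correspondence from \Cref{sec:definition} to the relative setting. Because $\Ucover{Y}$ is $G$-invariant, the $G$-equivariance of $\alpha$ forces $\alpha$ to vanish on $\Ucover{Y}$ precisely when its evaluation $\hat\alpha(\sigma) = \alpha(\sigma)(1_G)$ does: for $\sigma \in \Ucover{Y}$ every translate $g^{-1}\sigma$ still lies in $\Ucover{Y}$, so $\alpha(\sigma)(g) = g \cdot \hat\alpha(g^{-1}\sigma)$ vanishes for all $g \in G$. This yields a cochain isomorphism $C_b^\bullet(\Ucover{X}, \Ucover{Y}; \Linf(G, V))^G \cong C_b^\bullet(\Ucover{X}, \Ucover{Y}; V)$, where the right-hand side consists of (not necessarily equivariant) bounded $V$-valued relative cochains; hence $H_b^\bullet(G, \mathcal{H}; \Linf(G, V)) \cong H_b^\bullet(\Ucover{X}, \Ucover{Y}; V)$.

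The second step is to plug this identification into the long exact sequence in bounded cohomology of the pair $(\Ucover{X}, \Ucover{Y})$. Since $X$ is a $K(G,1)$, the space $\Ucover{X}$ is contractible, so \Cref{thm:mapping-theorem} gives $H_b^k(\Ucover{X}; V) = 0$ for every $k \geq 1$. Each connected component of $\Ucover{Y}$ is the universal cover of some $Y_i$ (using injectivity of $H_i \hookrightarrow G$) and is therefore contractible as well; since the bounded cochain complex of $\Ucover{Y}$ splits as a product over its components, $H_b^k(\Ucover{Y}; V) = 0$ for $k \geq 1$ too. For $k \geq 2$, the segment $H_b^{k-1}(\Ucover{Y}; V) \to H_b^k(\Ucover{X}, \Ucover{Y}; V) \to H_b^k(\Ucover{X}; V)$ has zeros at both ends, which yields the statement. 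The only ingredient requiring a small verification is the exactness of the short sequence $0 \to C_b^\bullet(\Ucover{X}, \Ucover{Y}; V) \to C_b^\bullet(\Ucover{X}; V) \to C_b^\bullet(\Ucover{Y}; V) \to 0$ that produces the long exact sequence; this is routine, since any bounded singular cochain on $\Ucover{Y}$ extends to $\Ucover{X}$ by being set to $0$ on simplices not contained in $\Ucover{Y}$. I do not anticipate any serious obstacle.
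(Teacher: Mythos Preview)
Your plan is sound and is essentially the same argument as the paper's, just packaged via the long exact sequence of the pair rather than by an explicit primitive-correction. The identification $C_b^\bullet(\Ucover{X},\Ucover{Y};\Linf(G,V))^G \cong C_b^\bullet(\Ucover{X},\Ucover{Y};V)$ and the vanishing of $H_b^k(\Ucover{X};V)$ are exactly what the paper uses.

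There is one point you gloss over that the paper singles out as the delicate step. Your sentence ``since the bounded cochain complex of $\Ucover{Y}$ splits as a product over its components, $H_b^k(\Ucover{Y};V)=0$'' is not quite right as stated. The space $\Ucover{Y}$ typically has \emph{infinitely many} components (one for each coset $gH_i$), and $C_b^\bullet(\Ucover{Y};V)$ is not the categorical product $\prod_j C_b^\bullet(\Ucover{Y}_j;V)$ but only the $\ell^\infty$-subcomplex of tuples with uniformly bounded sup-norm. Vanishing of the cohomology of each factor does not automatically imply vanishing of the cohomology of such an $\ell^\infty$-product: a bounded cocycle restricts to a bounded cocycle on each component, and each of these has a bounded primitive, but to assemble them into a \emph{globally} bounded primitive you need the individual primitives to have \emph{uniformly} bounded norm. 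This works here because on a contractible space the cone-integration homotopy produces a primitive of norm at most that of the cocycle, so the bound is uniform; the paper makes exactly this point explicit (``what is important is that there is a uniform bound independent of the component''). Once you add this remark, your argument is complete and equivalent to the paper's: the paper's direct construction---take a bounded primitive $\beta$ of $\alpha$ on $\Ucover{X}$, then subtract $\Cbd\gamma$ where $\gamma$ is built from uniformly bounded primitives of $\beta|_{\Ucover{Y}_j}$---is precisely what the connecting homomorphism in your long exact sequence unravels to.
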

    \begin{proof}
        Fix a classifying pair $(X,Y)$ with universal cover $(\Ucover{X},\Ucover{Y})$.
        Then, by the same argument used in the proof of \Cref{prop:bounded_and_linf}, $H_b^\bullet(G,\mathcal{H};\Linf(G,V))$ is isomorphic to the cohomology of $C_b^\bullet(\Ucover{X},\Ucover{Y};V)$.
        
        Let $\alpha \in C_b^k(\Ucover{X},\Ucover{Y};V)$ be a cocycle, with $k \ge 2$.
        Since $\Ucover{X}$ is contractible, $\alpha$ has a bounded primitive $\beta$, which however might be nonzero on $\Ucover{Y}$.
        For example, $\beta$ can be obtained by ``integrating along cones'': using a contraction of $\Ucover{X}$ it is easy to  associate to any singular $(k-1)$-simplex $\sigma$ a singular $k$-simplex $c(\sigma)$, thought of as a cone over $\sigma$, in a way compatible with faces; then $\beta(\sigma) = \alpha(c(\sigma))$ is a primitive with norm not greater than that of $\alpha$.
        
        The components of $\Ucover{Y}$ are contractible because the inclusion $Y \subseteq X$ is $\pi_1$-injective and the components of $Y$ are aspherical.
        Hence, the restriction of $\beta$ to any component of $\Ucover{Y}$ (which is a cocycle, since $\alpha$ vanishes on $\Ucover{Y}$) has a bounded primitive, with norm not greater than that of $\beta$ (what is important is that there is a uniform bound independent of the component).
        Here we are using that $k \ge 2$.
        Let $\gamma\in C_b^{k-2}(\Ucover{X};V)$ be the cochain that assigns value $0$ to simplices not contained in $\Ucover{Y}$, and that on the components of $\Ucover{Y}$ coincides with the bounded primitives of the restrictions of $\beta$ considered above.
        Now, $\beta - \Cbd{\gamma}$ is a bounded primitive of $\alpha$ vanishing on $\Ucover{Y}$.
    \end{proof}
    
    \begin{corollary}
        \label{cor:null-composition_relative}
        Let $(G,\mathcal{H})$ be a group pair and $V$ a normed $\R[G]$-module.
        Denote by $\iota^\bullet$ the comparison map from ordinary to \Lcoh{} induced by the inclusion of $V$ into $\Linf(G,V)$.
        Then the composition
        \[\begin{tikzcd}
            H_b^k(G,\mathcal{H};V) \ar[r,"c^k"] & H^k(G,\mathcal{H};V) \ar[r,"\iota^k"] & H_\Linfs^k(G,\mathcal{H};V)
        \end{tikzcd}\]
        is the zero map for every $k \ge 2$.
    \end{corollary}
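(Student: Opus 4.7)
The plan is to mimic the proof of \Cref{cor:null-composition}, replacing \Cref{prop:bounded_and_linf} with its relative analogue \Cref{prop:bounded_and_linf_relative}. Concretely, I would factor the composition in the statement through the (relative) bounded cohomology with coefficients in $\Linf(G,V)$, exploiting the naturality of the comparison map $c^\bullet$ with respect to the coefficient morphism $\iota: V \to \Linf(G,V)$.

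In more detail, I would fix a classifying pair $(X,Y)$ for $(G,\mathcal{H})$ with universal cover $(\Ucover{X},\Ucover{Y})$, and work at the level of the cochain complexes $C_b^\bullet(\Ucover{X},\Ucover{Y};-)^G$ and $C^\bullet(\Ucover{X},\Ucover{Y};-)^G$. The inclusion of coefficients $\iota$ induces chain maps making the square
\[\begin{tikzcd}
    C_b^k(\Ucover{X},\Ucover{Y};V)^G \ar[r] \ar[d] & C^k(\Ucover{X},\Ucover{Y};V)^G \ar[d] \\
    C_b^k(\Ucover{X},\Ucover{Y};\Linf(G,V))^G \ar[r] & C^k(\Ucover{X},\Ucover{Y};\Linf(G,V))^G
\end{tikzcd}\]
commute, where the horizontal arrows are the inclusions of bounded cochains. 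Passing to cohomology and recalling that the right-hand column composed with the top row gives exactly the composition $\iota^k \circ c^k$ of the statement (because, as observed in \Cref{sec:definition}, the comparison map $\iota^k$ is precisely the map in cohomology induced by $\iota: V \to \Linf(G,V)$), I obtain the factorization
\[\begin{tikzcd}
    H_b^k(G,\mathcal{H};V) \ar[r] & H_b^k(G,\mathcal{H};\Linf(G,V)) \ar[r,"c^k"] & H^k(G,\mathcal{H};\Linf(G,V)),
\end{tikzcd}\]
and the target of the last arrow is $H_\Linfs^k(G,\mathcal{H};V)$ by definition.

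Finally, I invoke \Cref{prop:bounded_and_linf_relative}, which says that the middle term vanishes for every $k \ge 2$, so the whole composition is zero. There is no real obstacle here: the hypothesis $k \ge 2$ enters only through \Cref{prop:bounded_and_linf_relative}, whose proof already addressed the subtle point (the extension of a bounded primitive from $\Ucover{X}$ to one that additionally vanishes on $\Ucover{Y}$, which requires $k \ge 2$). The remaining work is the purely formal naturality check above, which is identical to the absolute case.
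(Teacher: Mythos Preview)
Your proposal is correct and matches the paper's proof exactly: the paper simply writes ``Identical to the proof of \Cref{cor:null-composition}'', which is precisely the factorization through $H_b^k(G,\mathcal{H};\Linf(G,V))$ followed by an appeal to \Cref{prop:bounded_and_linf_relative} that you spell out.
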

    \begin{proof}
        Identical to the proof of \Cref{cor:null-composition}.
    \end{proof}
    
    We now revisit the notion of \emph{strong vanishing} of $H_\Linfs^n(G,\mathcal{H};V)$, which has been considered by Gersten in the absolute case \cite{Gersten1996a, Gersten1998} and by Gautero and Heusener in the relative case \cite{GH2009}.
    
    Let $(X,Y)$ be a classifying pair for $(G,\mathcal{H})$, with universal cover $(\Ucover{X},\Ucover{Y})$.
    We say that $(X,Y)$ is relatively finite in dimension $\le n$ if $X$ has only a finite number of cells of dimension $\le n$ outside $Y$.
    For any cellular cochain $\alpha \in C^k(\Ucover{X},\Ucover{Y};V)$ define $\norm{\alpha}_\infty = \sup\{\norm{\alpha(c)} \mid c \text{ is a }k\text{-cell}\}.$
    Assuming that $(X,Y)$ is relatively finite in dimension $\le n$, and that $\alpha$ is a $k$-cochain with $k\le n$, then it is clear that $\norm{\alpha}_\infty < \infty$ if and only if $\alpha$ is bounded on orbits.
    
    \begin{definition}
        Let $(G,\mathcal{H})$ be a group pair, $V$ a normed $\R[G]$-module and $n \ge 0$ an integer.
        We say that $H_\Linfs^n(G,\mathcal{H};V)$ vanishes \emph{strongly} if:
        \begin{itemize}
            \item $(G,\mathcal{H})$ has at least a classifying pair $(X,Y)$ which is relatively finite in dimension $\le n$;
            \item For every such pair, there is a constant $L > 0$ such that every cellular $n$-cocycle $\alpha\in C_\Linfs^n(\Ucover{X},\Ucover{Y};V)$ has a primitive $\beta$ with $\norm{\beta}_\infty \le L \cdot \norm{\alpha}_\infty$.
        \end{itemize}
    \end{definition}
    
    If the second condition holds for at least one classifying pair which is relatively finite in dimension $\le n$, then it holds for all of them (possibly with a different constant $L$).
    This follows immediately from the following straightforward generalization of \cite[Proposition 4.6]{Gersten1996b}.
    
    \begin{proposition}
        \label{prop:strong_vanishing}
        Let $(G,\mathcal{H})$ be a group pair admitting a classifying pair $(X,Y)$ relatively finite in dimension $\le n$.
        Let $V$ be a normed $\R[G]$-module.
        Then $H_\Linfs^n(G,\mathcal{H};V)$ vanishes strongly if and only if $H_\Linfs^n(G,\mathcal{H};\Linf(\N,V)) = 0$, where $\Linf(\N,V)$ is the $\R[G]$-module of bounded sequences in $V$, endowed with the sup norm.
    \end{proposition}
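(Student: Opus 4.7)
The plan is to work throughout with the equivalent description of $\Linf$-cohomology in terms of cellular cochains bounded on orbits (not necessarily $G$-equivariant), as developed in \Cref{sec:definition} and adapted to the relative setting in \Cref{sec:relative_hyperbolicity}. The finiteness of $(X,Y)$ in dimension $\le n$ ensures that only finitely many orbits of cells outside $\Ucover{Y}$ appear in dimensions $\le n$, so in those dimensions a relative cellular cochain is bounded on orbits if and only if it has finite $\norm{\cdot}_\infty$. The underlying bookkeeping observation is that an $\Linf(\N,V)$-valued cochain is the same thing as a uniformly bounded sequence of $V$-valued cochains: the bundling $\beta(c)(j) := \beta_j(c)$ produces a bounded-on-orbits $\Linf(\N,V)$-valued cochain of norm $\sup_j \norm{\beta_j}_\infty$, and this assignment commutes with the coboundary as well as with the condition of vanishing on $\Ucover{Y}$.

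For the forward implication, I would assume strong vanishing. Given a cocycle $\alpha \in C_\Linfs^n(\Ucover{X},\Ucover{Y};\Linf(\N,V))$, coordinatewise evaluation yields cocycles $\alpha_j \in C_\Linfs^n(\Ucover{X},\Ucover{Y};V)$ with $\norm{\alpha_j}_\infty \le \norm{\alpha}_\infty$ for every $j$. Strong vanishing furnishes primitives $\beta_j$ with $\norm{\beta_j}_\infty \le L \norm{\alpha}_\infty$, uniformly in $j$; bundling them produces a primitive $\beta$ of $\alpha$ with values in $\Linf(\N,V)$ and norm $\le L \norm{\alpha}_\infty$, so $H_\Linfs^n(G,\mathcal{H};\Linf(\N,V)) = 0$.

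For the converse, I would assume $H_\Linfs^n(G,\mathcal{H};\Linf(\N,V)) = 0$. First I verify that $H_\Linfs^n(G,\mathcal{H};V)$ itself vanishes: given any $V$-valued cocycle $\alpha$, I compose with the $G$-equivariant diagonal embedding $V \hookrightarrow \Linf(\N,V)$ sending $v$ to the constant sequence $(v)_{j \in \N}$, apply the hypothesis to obtain an $\Linf(\N,V)$-valued primitive, and evaluate at any fixed coordinate to recover a $V$-valued primitive of $\alpha$. To produce the uniform constant $L$, I argue by contradiction: if no such $L$ existed, I would select, for each positive integer $L$, a cocycle $\alpha_L$ with $\norm{\alpha_L}_\infty = 1$ admitting no primitive of norm $\le L$; bundling these into a single cocycle $\alpha$ with values in $\Linf(\N,V)$ and norm $\le 1$ and applying the hypothesis yields a primitive $\beta$ of $\alpha$ of some finite norm $M$, whose coordinatewise evaluations are primitives $\beta_L$ of $\alpha_L$ with $\norm{\beta_L}_\infty \le M$, contradicting the choice of $\alpha_L$ for any $L > M$.

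The entire content of the argument is concentrated in the bundling construction and its compatibility with the coboundary; the remaining steps are routine checks that the various conventions in play (bounded on orbits versus uniformly bounded, vanishing on $\Ucover{Y}$, componentwise $G$-action on $\Linf(\N,V)$) are consistent, which they are thanks to the finiteness hypothesis in dimension $\le n$. I do not foresee a serious obstacle; the only mildly delicate point is separating the two failures of strong vanishing (absence of a primitive versus absence of a uniform bound) and treating each by the appropriate variant of the bundling trick.
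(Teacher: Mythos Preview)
Your proposal is correct and follows essentially the same approach as the paper: both arguments rest on the observation that a relative cochain with values in $\Linf(\N,V)$ is the same as a uniformly $\norm{\cdot}_\infty$-bounded sequence of $V$-valued relative cochains, and both directions are obtained by bundling/unbundling accordingly. The paper's write-up is slightly terser (it runs the contrapositive of the converse without first isolating plain vanishing), whereas you split the failure of strong vanishing into the two cases ``no primitive'' and ``no uniform bound''; this split is harmless but unnecessary, since the single contradiction argument with the sequence $(\alpha_L)_L$ already covers the case of a cocycle with no primitive at all.
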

    \begin{proof}
        All the cochains considered in this proof are cellular.
        There is an obvious correspondence between $C_\Linfs^n(\Ucover{X},\Ucover{Y};\Linf(\N,V))$ and sequences of equibounded (with respect to $\norm{\cdot}_\infty$) cochains in $C_\Linfs^n(\Ucover{X},\Ucover{Y};V)$.
        
        If $H_\Linfs^n(G,\mathcal{H};V)$ does not vanish strongly, then there is a sequence of cocycles $\alpha_m \in C_\Linfs^n(\Ucover{X},\Ucover{Y};V)$, indexed by $m \in \N$, with $\norm{\alpha_m}_\infty \le 1$ but without a primitive of norm $\le m$.
        This sequence corresponds to a certain cocycle $\alpha \in C_\Linfs^n(\Ucover{X},\Ucover{Y};\Linf(\N,V))$.
        Then $\alpha = \Cbd\beta$, and $\beta$ gives a primitive $\beta_n$ to each $\alpha_n$ with norm $\norm{\beta_n}_\infty \le \norm{\beta}_\infty$, in contrast with the definition of $\alpha_n$.
        A similar reasoning applies for the converse implication.
    \end{proof}
    
    There are several equivalent ways to define relatively hyperbolic group pairs.
    Among them, we use the characterization introduced by Groves and Manning in \cite{GM2008} that exploits the properties of the \emph{cusped space} associated to $(G,\mathcal{H})$.
    This approach requires $G$ to be finitely generated; note that there are more general treatments of relative hyperbolicity that do not need this assumption.
    For an account of these, we refer to \cite{Hru2010} and \cite{Osi2006}.
    
    Let $(G,\mathcal{H})$ be a group pair, with $\mathcal{H} = \{H_i\}_{i\in I}$.
    A relative generating set for $(G,\mathcal{H})$ is a set $S$ with a function $S \to G$ such that the induced homomorphism $\left(\ast_{i\in I} H_i\right) \ast F_S \to G$ is surjective, where $F_S$ denotes the free group on $S$.
    A relative presentation for $(G,\mathcal{H})$ is given by a relative generating set $S$ and a set of relations $R \subseteq \left(\ast_{i\in I} H_i\right) \ast F_S$ whose normal closure is equal to the kernel of the homomorphism onto $G$.
    The relative presentation is finite if both $S$ and $R$ are finite.
    Hereafter, we assume that $G$ is finitely generated and that
    $(G,\mathcal{H})$ admits a finite relative presentation.
    By \cite[Theorem 1.1]{Osi2006}, these assumptions imply that $\mathcal{H} = \{H_1, \ldots, H_m\}$ is finite and that each $H_i$ is finitely generated.
    
    We refer to \cite{GM2008} for the precise construction of the cusped space $\mathcal{C}$ associated to a finite relative presentation (it is obtained from the relative Cayley complex by attaching a \emph{combinatorial horoball} to every coset of the form $gH_i$).
    Here, we just list its salient properties:
    \begin{enumerate}[label=(\arabic*)]
        \item $\mathcal{C}$ is a simply connected, locally compact $2$-dimensional \CW{};
        \item \label{p:horoballs} $\mathcal{C}$ contains a distinguished collection of disjoint subcomplexes, called \emph{horoballs}, and each horoball is simply connected;
        \item \label{p:action} $G$ acts freely and properly discontinuously on $\mathcal{C}$, preserving the cellular structure and the union of the horoballs;
        \item \label{p:rel_cocompact} The action is not cocompact (unless $\mathcal{H}$ is empty), but there are only finitely many orbits of cells not lying inside the horoballs;
        \item \label{p:cosets} There is a natural $G$-equivariant bijection between the set of horoballs and the set of cosets of the form $gH_i$, with $g \in G$ and $H_i \in \mathcal{H}$;
        \item \label{p:uniform_boundary} There is a uniform bound on the norm $\norm{\Bd c}_1$ of the boundary of a $2$-cell $c$.
        Here, $\norm{\cdot}_1$ is the norm we already used in \Cref{sec:isoperimetric}.
        \item \label{def:relative_hyperbolicity} $(G,\mathcal{H})$ is relatively hyperbolic if and only if $\mathcal{C}$ satisfies a linear isoperimetric inequality for cellular $1$-cycles, \ie, there is a constant $L > 0$ such that every $1$-cycle $b$ is the boundary of a $2$-chain $c$ with $\norm{c}_1 \le L \cdot \norm{b}_1$;
        \item \label{p:horoball_3} Every horoball satisfies a linear isoperimetric inequality for cellular $1$-cycles, with constant $L = 3$.
    \end{enumerate}
    We finally have all the ingredients to state and prove our characterization.
    
    \begin{theorem}
        \label{thm:relative_hyperbolicity}
        Let $(G,\mathcal{H})$ be a group pair admitting a finite relative presentation, with $G$ finitely generated.
        Then the following conditions are equivalent:
        \begin{enumerate}[label=(\roman*)]
            \item \label{it:rel_hyp} The pair $(G,\mathcal{H})$ is relatively hyperbolic;
            \item \label{it:comp_surj} The comparison map $c^2:H^2_b(G,\mathcal{H};V) \to H^2(G,\mathcal{H};V)$ is surjective for every normed $\R[G]$-module $V$;
            \item \label{it:linf_null} $H_\Linfs^2(G,\mathcal{H};V) = 0$ for every normed vector space $V$;
            \item \label{it:strong_null} $H_\Linfs^2(G,\mathcal{H};\R)$ vanishes strongly.
        \end{enumerate}
        Moreover, if $(G,\mathcal{H})$ is relatively hyperbolic, then for every $k \ge 2$:
        \begin{enumerate}[label=(\roman*-$k$)]
            \setcounter{enumi}{1}
            \item \label{it:comp_surj_k} The comparison map $c^k:H^k_b(G,\mathcal{H};V) \to H^k(G,\mathcal{H};V)$ is surjective for every normed $\R[G]$-module $V$;
            \item \label{it:linf_null_k} $H_\Linfs^k(G,\mathcal{H};V) = 0$ for every normed vector space $V$;
            \item \label{it:strong_null_k} $H_\Linfs^k(G,\mathcal{H};\R)$ vanishes strongly.
        \end{enumerate}
    \end{theorem}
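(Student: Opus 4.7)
The equivalence (i) $\Leftrightarrow$ (ii), together with its higher-degree version (i) $\Rightarrow$ (ii-$k$), is the relative generalization of Mineyev's characterization proved by Mineyev-Yaman \cite{MY} and Franceschini \cite{Fra2018}; I take these as given. The new content is the web relating (i), (iii), (iv) and their higher-degree analogues, and my plan is to close the cycle (i) $\Rightarrow$ (iii) $\Rightarrow$ (iv) $\Rightarrow$ (i), the middle link being supplied by \Cref{prop:strong_vanishing}. As scaffolding I fix a classifying pair $(X,Y)$ for $(G,\mathcal{H})$ built from the Groves-Manning cusped space $\mathcal{C}$ by attaching cells of dimension $\ge 3$---both inside each horoball (to make it contractible) and outside the horoballs (to make $\widetilde{X}$ contractible)---$G$-equivariantly and with only finitely many orbits of cells outside $\widetilde{Y}$ in each dimension. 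Properties (1)--(2) of the cusped space make this possible, and the resulting pair is relatively finite in every dimension.

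For (i) $\Rightarrow$ (iii-$k$) I adapt the Hahn-Banach argument of \Cref{thm:isoperimetric_cochain} to the relative setting. In degree $2$, the cusped-space inequality (property (7)) together with the contractibility of $\widetilde{X}$ gives $\abs{\alpha(c)} \le L\norm{\alpha}_\infty\norm{\partial c}_1$ for every bounded relative cocycle $\alpha$ and every $2$-chain $c$, and Hahn-Banach produces a bounded absolute primitive $\mu$. The restriction of $\mu$ to each horoball $\mathcal{H}_j$ is a bounded $1$-cocycle on a contractible combinatorial horoball, and the explicit vertical structure of such horoballs (together with property (8)) supplies uniformly bounded $0$-cochain primitives $\nu_j$; subtracting $\delta(\sum_j \tilde{\nu}_j)$ from $\mu$ yields a bounded relative primitive. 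Because the underlying inequality is at the chain level, the argument is coefficient-independent and applies for any normed vector space $V$, yielding (iii). For $k \ge 3$ the same scheme iterates over the coned-off higher-dimensional cells added to $\mathcal{C}$ and to the horoballs, all of which are contractible by construction. The implication (iii-$k$) $\Rightarrow$ (iv-$k$) then follows from \Cref{prop:strong_vanishing} applied to $V=\R$: (iii-$k$) gives $H_\Linfs^k(G,\mathcal{H};\Linf(\N,\R))=0$, which by \Cref{prop:strong_vanishing} is equivalent to (iv-$k$).

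The heart of the new content is (iv) $\Rightarrow$ (i), the converse direction left open by Gautero-Heusener \cite{GH2009}. By Hahn-Banach duality (running the argument of \Cref{thm:isoperimetric_cochain} in reverse with $V=\R$), strong vanishing of $H_\Linfs^2(G,\mathcal{H};\R)$ is equivalent to a chain-level linear isoperimetric inequality on $(\widetilde{X},\widetilde{Y})$: some $L' > 0$ such that every relative $1$-boundary is filled by a relative $2$-chain of $\ell^1$-norm at most $L'$ times its own. To promote this to the \emph{absolute} isoperimetric inequality of the cusped space---which by property (7) characterizes relative hyperbolicity---I decompose an arbitrary $1$-cycle $b$ in $\mathcal{C}$ as $b_{\text{horo}} + b_{\text{out}}$ along the horoball boundary, fill $b_{\text{horo}}$ inside the horoballs using property (8), close up $b_{\text{out}}$ into a relative cycle and fill it using the chain-level inequality, and glue the two fillings into an absolute filling of $b$ of $\ell^1$-norm linear in $\norm{b}_1$.

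The main obstacle I anticipate is this final assembly: in the Groves-Manning construction horoballs share vertices (and sometimes edges) with the outer part of $\mathcal{C}$, so the decomposition $b = b_{\text{horo}} + b_{\text{out}}$ is not canonical and closing up $b_{\text{out}}$ into a relative cycle without inflating $\ell^1$-norms requires careful accounting along the horoball boundary. The clean workaround is to work with a modified classifying pair in which each horoball is separated from the outer cells by a collar, making the horoballs honest subcomplex components of $\widetilde{Y}$; after this adjustment the assembly becomes straightforward bookkeeping.
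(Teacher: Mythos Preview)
Your $(iv) \Rightarrow (i)$ argument is essentially correct and close to the paper's; the paper streamlines it by working directly with the relative filling seminorm $\norm{b}_f = \inf\{\norm{c}_1 \mid b - \Bd c \subseteq \Ucover{Y}\}$ on $1$-cycles, which packages the horoball contribution into the definition and makes your decomposition $b = b_{\mathrm{horo}} + b_{\mathrm{out}}$ (and hence the collar) unnecessary. The use of \Cref{prop:strong_vanishing} for $(iii) \Rightarrow (iv)$ is also exactly the paper's approach.

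The genuine gap is in your direct route $(i) \Rightarrow (iii)$ for \emph{arbitrary} normed $V$. The inequality $\norm{\alpha(c)}_V \le L\norm{\alpha}_\infty\norm{\Bd c}_1$ is indeed coefficient-independent, but the Hahn--Banach step is not: you are asking to extend a bounded linear map $B_1(\Ucover{X};\R) \to V$ to all of $C_1(\Ucover{X};\R)$ with the same bound, and this is precisely $1$-injectivity of $V$ as a Banach space---it holds for $\R$ and for $\Linf(S,\R)$ but fails for general $V$. Nor does the linear isoperimetric inequality by itself produce a bounded \emph{linear} section $Z_1 \to C_2$ that would let you bypass Hahn--Banach. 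The paper sidesteps this entirely by routing through $(ii)$: applying the surjectivity of $c^2$ with coefficients $W=\Linf(G,V)$ and invoking \Cref{prop:bounded_and_linf_relative} (whose proof uses cone-integration on the contractible $\Ucover{X}$, not Hahn--Banach, and therefore works for every $V$) to kill the domain, one gets $H^2_\Linfs(G,\mathcal{H};V)=0$ for free. There is a second, smaller gap: your claim that cells of dimension $\ge 3$ can be attached to $\mathcal{C}$ with finitely many $G$-orbits outside $\Ucover{Y}$ does not follow from simple connectivity alone; the paper obtains such a classifying pair only \emph{assuming} relative hyperbolicity, via Franceschini's Rips-complex construction over the $1$-skeleton of $\mathcal{C}$ together with a Borel construction to handle torsion, which is exactly why (iv-$k$) for $k\ge 3$ sits in the ``moreover'' clause rather than among the equivalences.
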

    \begin{proof}
        Throughout the proof, $k$ is an integer not smaller than $2$.
        
        The implications \ref{it:rel_hyp} $\iff$ \ref{it:comp_surj} $\implies$ \ref{it:comp_surj_k}, which constitute a characterization of relative hyperbolicity using bounded cohomology, are the main result of \cite{Fra2018}, and we do not discuss them further.
        
        To prove \ref{it:comp_surj_k} $\implies$ \ref{it:linf_null_k}, take $W = \Linf(G,V)$.
        If \ref{it:comp_surj_k} holds then the map $c^k:H^k_b(G,\mathcal{H};W) \to H^k(G,\mathcal{H};W)$ is surjective.
        But the domain of this map is $0$ by \Cref{prop:bounded_and_linf_relative}, and its target is $H_\Linfs^k(G,\mathcal{H};V)$.
        In particular, this establishes \ref{it:comp_surj} $\implies$ \ref{it:linf_null}.
        
        Suppose now that \ref{it:linf_null} holds.
        Then \ref{it:strong_null} immediately follows from \Cref{prop:strong_vanishing}, provided that there exists a classifying pair $(X,Y)$ which is relatively finite in dimension $\le 2$.
        But this is true for every pair $(G,\mathcal{H})$ admitting a finite relative presentation.
        
        To close the circle of equivalences, we prove \ref{it:rel_hyp} assuming \ref{it:strong_null}.
        Let $\mathcal{C}$ be a cusped space associated to $(G,\mathcal{H})$; the goal is to show that it satisfies a linear isoperimetric inequality for cellular $1$-cycles.
        Let $\mathcal{C} \to X'$ be the quotient map induced by the action of $G$, and let $Y'$ be the image in $X'$ of the union of the horoballs.
        If $X'$ and $Y'$ were aspherical, they would constitute a classifying pair for $(G,\mathcal{H})$.
        In any case, we obtain a classifying pair $(X,Y)$ by attaching to $(X',Y')$ cells of dimension $\ge 3$ (first, add cells to the subcomplex $Y$, making its components aspherical without altering fundamental groups; then, attach cells to $X$ to make it aspherical).
        Let $(\Ucover{X},\Ucover{Y})$ be its universal cover.
        The $2$-skeleton of $\Ucover{X}$ is $\mathcal{C}$, and the $2$-skeleton of $\Ucover{Y}$ is the union of the horoballs.
        By \ref{it:strong_null}, there is a constant $L > 0$ such that every cellular cocycle $\alpha \in C_\Linfs^2(\Ucover{X},\Ucover{Y};\R)$ has a primitive $\beta$ with $\norm{\beta}_\infty \le L\cdot\norm{\alpha}_\infty$.
        
        We endow the vector space $B_1(\Ucover{X};\R)$ of cellular $1$-cycles with the \emph{relative filling seminorm} $\norm{b}_f = \inf\{\norm{c}_1 \mid c \in C_2(\Ucover{X};\R), b - \Bd{c} \subseteq \Ucover{Y}\}$.
        
        Fix $b \in B_1(\Ucover{X};\R)$, and suppose that $b$ is not entirely contained in $\Ucover{Y}$, so that $\norm{b}_f > 0$.
        A standard application of the Hahn-Banach theorem gives a linear function $h:B_1(\Ucover{X};\R) \to \R$ of norm $1$ with respect to $\norm{\cdot}_f$ and such that $h(b) = \norm{b}_f$.
        Define $\alpha \in C_\Linfs^2(\Ucover{X},\Ucover{Y};\R)$ such that $\alpha(c) = h(\Bd{c})$ for every $2$-chain $c$.
        Notice that $\alpha$ is a cocycle, and it vanishes on $\Ucover{Y}$ because $h$ does.
        Moreover, $\abs{\alpha(c)} \le \norm{\Bd{c}}_f \le \norm{c}_1$, therefore $\norm{\alpha}_\infty \le 1$.
        Let $\beta \in C_\Linfs^1(\Ucover{X},\Ucover{Y};\R)$ be a primitive of $\alpha$ with $\norm{\beta}_\infty \le L$.
        By construction, $\norm{b}_f = h(b) = \beta(b) \le L \cdot \norm{b}_1$.
        Hence, there is a $2$-chain $c$ with $\norm{c}_1 \le (L+1)\cdot \norm{b}_1$ such that $b-\Bd{c}$ is supported on the horoballs.
        By property \ref{p:horoball_3} of the cusped space, $b-\Bd{c} = \Bd{e}$ for some $2$-cycle $e$ with $\norm{e}_1 \le 3 \cdot \norm{b - \Bd{c}}_1$.
        Property \ref{p:uniform_boundary} of the cusped space implies that $\norm{\Bd{c}}_1 \le K \cdot \norm{c}_1$ for some constant $K > 0$ independent of $c$.
        Therefore, $b = \Bd{(c + e)}$ and $\norm{c + e}_1 \le ((L+1)(3K+1)+3)\cdot\norm{b}_1$.
        This argument is valid for any $1$-cycle $b$, proving that $\mathcal{C}$ satisfies a linear isoperimetric inequality for cellular $1$-cycles and that $(G,\mathcal{H})$ is relatively hyperbolic.
        
        It remains to prove \ref{it:strong_null_k} assuming \ref{it:rel_hyp}.
        We already know that \ref{it:rel_hyp} implies \ref{it:linf_null_k}, and the same argument used for the implication \ref{it:linf_null} $\implies$ \ref{it:strong_null} also applies for arbitrary $k \ge 2$, except that we additionally need to prove the existence of a classifying pair which is relatively finite in dimension $\le k$.
        By considering Rips complexes over the $1$-skeleton of $\mathcal{C}$, under the assumption of relative hyperbolicity, Franceschini in \cite[Corollary 4.8]{Fra2018} obtains a locally finite simplicial complex $\mathcal{R}$ enjoying the same properties \ref{p:horoballs}--\ref{p:cosets} of the cusped space, with the following differences:
        \begin{itemize}
            \item $\mathcal{R}$ and the horoballs are contractible; 
            \item The action of $G$ is not necessarily free.
        \end{itemize}
        If $G$ is torsion-free then the action is free, and the quotient of $\mathcal{R}$ with respect to the action gives a classifying pair which is relatively finite in every dimension.
        In the presence of torsion, cells of $\mathcal{R}$ might have finite nontrivial stabilizers.
        In this case, the Borel construction described in \cite[Chapter 6, Theorem 7.3.1 and Remark 7.3.2]{Geo2007} can be used to construct the desired classifying pair anyway.
    \end{proof}

    \section{Some undecidable algorithmic problems}
    \label{sec:uncomputability}
    
    In this section we show that there is no algorithm to compute the \Lcoh{} of a group or space.
    More precisely, for every $n \ge 1$ the following algorithmic problems are undecidable:
    \begin{itemize}
        \item Given a finite presentation $\langle S|R\rangle$, decide whether $H_\Linfs^n(\langle S|R\rangle;\R) = 0$;
        \item Given a finite simplicial complex $X$, decide whether $H_\Linfs^n(X;\R) = 0$.
    \end{itemize}
    Here, a simplicial complex is described by the set of its vertices and the collection of subsets of vertices constituting the simplices.
    Actually, in \Cref{thm:undecidable_complexes} below we consider a distinct algorithmic problem for every fixed dimension $m$ of the simplicial complex accepted as input, and we prove that for many pairs $(n,m)$ the problem is undecidable.
    
    An important source of undecidable problems concerning finite presentations of groups is the Adian-Rabin theorem \cite[Theorem 12.32]{Rot1995}: if $\mathcal{P}$ is a class of groups such that
    \begin{itemize}
        \item $\mathcal{P}$ is closed under isomorphisms,
        \item $\mathcal{P}$ contains at least a finitely presentable group, and
        \item there is a finitely presentable group $H$ that does \emph{not} appear as a subgroup of a group in $\mathcal{P}$,
    \end{itemize}
    then there is no algorithm to decide whether $\langle S|R\rangle \in \mathcal{P}$.
    Classes $\mathcal{P}$ satisfying the above conditions are called \emph{Markov classes}.
    For instance, finite (respectively, amenable) groups constitute a Markov class: any infinite (respectively, non-amenable) finitely presentable group $H$ satisfies the third condition.
    
    A key component of our arguments is the following result.
    We will use it to reduce the algorithmic problem of deciding whether a group is finite to a problem concerning the vanishing of \Lcoh{}.
    
    \begin{proposition}
        \label{prop:finiteness}
        Let $X$ be a connected \CW{} with finite $1$-skeleton and infinite fundamental group.
        Then $H_\Linfs^{n+1}\left((S^1)^n\times X;\R\right) \neq 0$ for every $n \in \N$.
    \end{proposition}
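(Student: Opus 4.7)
The plan is to construct an explicit $(n+1)$-cocycle on the universal cover of $Y = (S^1)^n \times X$ as a cross product of a ``word-length'' 1-cocycle on $\Ucover{X}$ with a top-dimensional cocycle on $\R^n$, and then to show that it cannot be a coboundary in $C^\bullet_\Linfs$ via an isoperimetric estimate on large cubical chains.

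Since $X$ has finite 1-skeleton, $G = \pi_1(X, x_0)$ is finitely generated; since $G$ is infinite, the word length $\ell \colon G \to \N$ with respect to a finite generating set is unbounded but satisfies $\abs{\ell(gh) - \ell(g)} \le \ell(h)$. Fix lifts $\Ucover{v}_1, \dots, \Ucover{v}_s \in \Ucover{X}$ of the 0-cells of $X$, with $\Ucover{v}_0$ corresponding to the identity, define the 0-cochain $\phi(g \cdot \Ucover{v}_i) = \ell(g)$, and set $\alpha = \Cbd \phi \in C^1(\Ucover{X};\R)$. The triangle inequality shows that $\alpha$ is bounded on every $G$-orbit of 1-cells, and for any cellular 1-chain $\gamma \subseteq \Ucover{X}^{(1)}$ joining $\Ucover{v}_0$ to $g \cdot \Ucover{v}_0$ one has $\alpha(\gamma) = \ell(g)$. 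Endow $(S^1)^n$ with the product cubical CW-structure, so $\R^n$ inherits the unit cubical decomposition, and let $\tau \in C^n(\R^n;\R)$ assign $1$ to each unit $n$-cube. Then $\tau$ is automatically a cocycle (top dimension) and uniformly bounded. The candidate class is the cellular cross product $\alpha \times \tau \in C^{n+1}(\R^n \times \Ucover{X};\R)$, nonzero only on product cells of type (unit $n$-cube) $\times$ (1-cell of $\Ucover{X}$); a direct check shows it is a cocycle bounded on $(\Z^n \times G)$-orbits.

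Assume, for contradiction, that $\alpha \times \tau = \Cbd \mu$ for some $\mu \in C^n_\Linfs(\R^n \times \Ucover{X};\R)$. Only two kinds of $n$-cells will appear below: (i) unit $n$-cube $\times$ 0-cell, and (ii) unit $(n-1)$-face $\times$ 1-cell. Because $X$ has finite 0- and 1-skeleton, each family decomposes into finitely many $(\Z^n \times G)$-orbits, so $\mu$ has uniform bounds $\abs{\mu} \le C_2$ on (i) and $\abs{\mu} \le C_1$ on (ii). For each $g \in G$, fix a 1-chain $\gamma$ in $\Ucover{X}^{(1)}$ joining $\Ucover{v}_0$ to $g \cdot \Ucover{v}_0$ and consider the $(n+1)$-chain $c_N = [0,N]^n \times \gamma$. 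Evaluating both sides of $\alpha \times \tau = \Cbd \mu$ on $c_N$ yields
\[
N^n \cdot \ell(g) \;=\; (\alpha \times \tau)(c_N) \;=\; \mu(\Bd c_N),
\]
while the product rule $\Bd c_N = \Bd [0,N]^n \times \gamma + (-1)^n [0,N]^n \times \Bd \gamma$ together with the type-wise bounds gives
\[
\abs{\mu(\Bd c_N)} \;\le\; 2n\, C_1 \norm{\gamma}_1 N^{n-1} + 2\, C_2 N^n.
\]
Dividing by $N^n$ and letting $N \to \infty$ forces $\ell(g) \le 2 C_2$; since $\ell$ is unbounded on $G$, choosing $g$ with $\ell(g) > 2 C_2$ produces the contradiction.

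The crux is matching orders of growth in the final estimate: the ``side'' contribution from $\Bd [0,N]^n \times \gamma$ scales as $N^{n-1}$ (perimeter of the cube), while the ``top/bottom'' contribution from $[0,N]^n \times \Bd \gamma$ scales as $N^n$ but carries only the harmless factor $\norm{\Bd \gamma}_1 = 2$, independent of $g$. This separation is exactly what lets $\ell(g)$ dominate after dividing by $N^n$, and the only property of $X$ actually used is the finiteness of its 1-skeleton, which guarantees the existence of the uniform bounds $C_1$ and $C_2$.
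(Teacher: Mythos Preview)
Your proposal is correct and follows essentially the same approach as the paper: build a ``word-length'' $1$-cocycle on $\Ucover{X}$, cross it with the top cocycle on $\R^n$, and rule out a bounded-on-orbits primitive by an isoperimetric estimate on cube$\times$path chains. The only cosmetic differences are that the paper uses the combinatorial distance in $\Ucover{X}^{(1)}$ (so $\alpha$ is globally $\pm 1$ on $1$-cells) rather than word length in $G$, and the paper couples the cube side-length with the path length (both equal to $k$) whereas you fix $\gamma$ and let $N\to\infty$; both variants yield the same contradiction.
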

    \begin{proof}
        We address the case $n = 0$ first.
        Fix a vertex $x_0 \in \Ucover{X}$ in the universal cover of $X$.
        For any vertex $v \in \Ucover{X}$, define $u(v) \in \N$ to be the minimum number of $1$-cells in paths from $x_0$ to $v$.
        This quantity is well defined because $\Ucover{X}$ is connected.
        Moreover, it attains arbitrarily large values, because only finitely many vertices can be reached from $x_0$ by paths of a fixed length (since $X$ has a finite $1$-skeleton), but there are infinitely many vertices (since the fundamental group of $X$ is infinite).
        The function $u$ defines a cellular $0$-cochain.
        Let $\alpha = \Cbd{u}$.
        Then $\alpha$ is bounded: it assigns the value $\pm 1$ to every $1$-cell.
        It defines a nontrivial class in $H_\Linfs^1(X;\R)$, because every primitive of $\alpha$ is obtained from $u$ by adding a constant, and so is unbounded.
        
        Now we consider $n > 0$.
        The universal cover of $(S^1)^n\times X$ is $\R^n \times \Ucover{X}$, where $\R^n$ has the cellular structure given by the regular grid of unit $n$-cubes.
        Define $\beta \in C_\Linfs^{n+1}(\R^n \times \Ucover{X};\R)$ by setting $\beta(e^n \times f^1) = \alpha(f^1)$ for every $n$-cell $e^n$ of $\R^n$ and every $1$-cell $f^1$ of $\Ucover{X}$, and assigning value $0$ to cells not of this form.
        Using that $\R^n$ has no $(n+1)$-cells and that $\alpha$ is a cocycle, it is easy to check that $\beta$ is a cocycle.
        We now prove that no primitive of $\beta$ is bounded on orbits.
        Fix an integer $k > 0$.
        Let $p$ be a $1$-chain in $\Ucover{X}$ corresponding to a shortest path from $x_0$ to a vertex having distance $k$ from $x_0$.
        Then $\abs{\alpha(p)} = \norm{p}_1 = k$.
        Let $q$ be the $n$-chain in $\R^n$ defined as the sum of the (positively oriented) cells contained in $[0,k]^n$.
        The product of $p$ and $q$ gives $c \in C_{n+1}(\R^n\times\Ucover{X};\R)$ with $\beta(c) = \norm{c}_1 = k^{n+1}$ and $\norm{\Bd{c}}_1 = 2(n+1)k^n$.
        Let $E$ be the set of $n$-cells of $\R^n \times \Ucover{X}$ that are product of an $n$-cell of $\R^n$ with a $0$-cell of $\Ucover{X}$ or of an $(n-1)$-cell of $\R^n$ with a $1$-cell of $\Ucover{X}$.
        Notice that $E$ is partitioned in finitely many $G$-orbits of cells, and that every cell in the support of $\Bd{c}$ belongs to $E$.
        Since $\abs{\beta(c)} = \frac{k}{2(n+1)}\norm{\Bd{c}}_1$ and $k$ can be chosen arbitrarily, any primitive of $\beta$ has to assume arbitrarily large values on the cells of $E$; therefore, it cannot be bounded on orbits.
    \end{proof}
    
    In the proofs that follow, we use the adverb \emph{effectively} with the meaning of ``by means of an algorithm''.
    Also, an effective sequence of objects (\eg, simplicial complexes) $\{X_i\}_{i \in \N}$ is a sequence such that there is an algorithm that outputs $X_i$ when given the number $i \in \N$ as input.
    \begin{theorem}
        Let $n \ge 1$.
        There is no algorithm that, given a finite group-presentation $\langle S|R\rangle$, decides whether $H_\Linfs^n(\langle S|R\rangle;\R) = 0$.
    \end{theorem}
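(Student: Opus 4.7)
The plan is to reduce from the problem of deciding whether a finite presentation $\langle S\mid R\rangle$ defines a finite group, which is undecidable by the Adian-Rabin theorem recalled earlier in this section (finite groups form a Markov class: $\Z$, for instance, witnesses the third condition). Given $\langle S\mid R\rangle$ presenting a group $G$, I would effectively output the finite presentation
\[
\langle S,\ t_1,\ldots,t_{n-1}\ \mid\ R,\ [t_i,t_j],\ [t_i,s]\rangle
\]
of $G' = G \times \Z^{n-1}$ (adding $n-1$ new generators together with the commutators making them central and pairwise commuting), and prove the central claim that $H_\Linfs^n(G';\R) = 0$ if and only if $G$ is finite. Combined with Adian-Rabin, this yields the desired undecidability.

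For the easy direction, suppose $G$ is finite. Then $\Z^{n-1}$ sits inside $G'$ as a finite-index subgroup, so by \Cref{rem:finite_index} we get $H_\Linfs^n(G';\R) \cong H_\Linfs^n(\Z^{n-1};\R)$. Taking $(S^1)^{n-1}$ as a cellular $K(\Z^{n-1},1)$ of dimension $n-1$, the cellular $n$-cochain group vanishes, and hence so does $H_\Linfs^n(\Z^{n-1};\R)$.

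For the main direction, suppose $G$ is infinite. Since $G$ is finitely generated by the finite set $S$, I can build a cellular $K(G,1)$ space $X$ with finite $1$-skeleton by starting from the presentation $2$-complex of $\langle S\mid R\rangle$ and attaching cells of dimension $\ge 3$ to kill higher homotopy. Then $Y = (S^1)^{n-1} \times X$ is aspherical with fundamental group $G'$, has finite $1$-skeleton (as a product of complexes with finite $1$-skeleta), and has infinite fundamental group $G'$. Applying \Cref{prop:finiteness} with its parameter $n$ replaced by our $n-1$ and its input CW-complex taken to be $X$ yields $H_\Linfs^n(Y;\R) \neq 0$; since $Y$ is a $K(G',1)$ this coincides with $H_\Linfs^n(G';\R)$.

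The proof is essentially an assembly of ingredients already available: \Cref{prop:finiteness} supplies the nonvanishing whenever the group is infinite, \Cref{rem:finite_index} delivers the vanishing in the finite case, and Adian-Rabin provides the undecidability of finiteness. The only point requiring care is the asphericalization step, which is what lets us trade the presentation $2$-complex for a genuine $K(G,1)$ so that the space-level conclusion of \Cref{prop:finiteness} translates into a statement about $H_\Linfs^n(G';\R)$. No new technical obstacle arises beyond this standard construction.
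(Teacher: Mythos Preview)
The proposal is correct and follows essentially the same approach as the paper: reduce from the undecidability of finiteness by passing to $G\times\Z^{n-1}$, use \Cref{rem:finite_index} plus a dimension count for the finite case, and invoke \Cref{prop:finiteness} on a $K(G,1)$ with finite $1$-skeleton for the infinite case. Your extra remarks about the explicit presentation of $G'$ and the asphericalization step are accurate elaborations of what the paper leaves implicit.
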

    \begin{proof}
        Suppose by contradiction that there is such an algorithm.
        Given a finite presentation of a group $H$, we can effectively obtain a finite presentation for $G = \Z^{n-1}\times H$ and then check whether $H_\Linfs^n(G;\R) = 0$.
        By applying \Cref{prop:finiteness} to a cellular $K(H,1)$ space $X$ with finite $1$-skeleton, we deduce that $H_\Linfs^n(G;\R) \neq 0$ if $H$ is infinite.
        On the other hand, if $H$ is finite, then $G$ has a finite-index subgroup isomorphic to $\Z^{n-1}$; so $G$ and $\Z^{n-1}$ have isomorphic \Lcoh{} by \Cref{rem:finite_index}.
        But $H_\Linfs^n(\Z^{n-1};\R) = 0$ by dimensional reasons.
        Thus, we are able to effectively check the finiteness of $H$, contradicting the Adian-Rabin theorem applied to the class of finite groups.
    \end{proof}
    
    \begin{theorem}
        \label{thm:undecidable_complexes}
        Let $n,m \in \N$, with either $1 \le n < m$ or $4 \le n = m$.
        There is no algorithm that, given a finite connected $m$-dimensional simplicial complex $X$, decides whether $H_\Linfs^n(X;\R) = 0$. 
    \end{theorem}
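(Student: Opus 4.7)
The plan is to reduce, via the Adian--Rabin theorem, from two properties of finitely presented groups that are already known to be undecidable: \emph{finiteness} (a Markov property) in the regime $1 \le n < m$, and \emph{amenability} (also Markov) in the regime $4 \le n = m$. Two different constructions are needed, because the construction $(S^1)^{n-1} \times P_G$ underlying \Cref{prop:finiteness} produces a complex of dimension $n+1$, which fits inside an $m$-complex precisely when $n < m$.

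For $1 \le n < m$, given a finite presentation $\langle S \mid R\rangle$ of a group $G$, I would form the presentation complex $P_G$ and produce a triangulation of $X_G = (S^1)^{n-1} \times P_G \times \Simplex{m-n-1}$, a finite connected simplicial complex of dimension exactly $m$ (with $\Simplex{0}$ a point); the $\Simplex{m-n-1}$ factor is cosmetic, because \Lcoh{}, being cohomology with a local coefficient system, is homotopy invariant. If $G$ is infinite, then $P_G \times \Simplex{m-n-1}$ has finite $1$-skeleton and infinite fundamental group, so \Cref{prop:finiteness} (applied with its ``$n$'' equal to $n-1$) yields $H_\Linfs^n(X_G;\R) \neq 0$. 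If $G$ is finite, pass to the finite cover $Y_G$ of $X_G$ corresponding to the finite-index subgroup $\Z^{n-1} \le \pi_1(X_G) = \Z^{n-1} \times G$; up to homotopy $Y_G$ is $(S^1)^{n-1} \times \Ucover{P_G}$, and since $\Ucover{P_G}$ is a finite simply connected $2$-complex, the universal cover $\R^{n-1} \times \Ucover{P_G}$ has only finitely many $\pi_1(Y_G)$-orbits of cells in each dimension. On this universal cover ``bounded on orbits'' is therefore the same as ``uniformly bounded'', whence $H_\Linfs^n(Y_G;\R) = H_b^n(\R^{n-1} \times \Ucover{P_G};\R)$, and this vanishes for $n \ge 1$ by Gromov's vanishing of the bounded cohomology of simply connected spaces (the same fact invoked in \Cref{prop:bounded_and_linf}). \Cref{rem:finite_index} then transfers the vanishing back to $X_G$. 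Thus an algorithm deciding $H_\Linfs^n(X_G;\R) = 0$ would decide finiteness of $G$, contradicting Adian--Rabin.

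For $4 \le n = m$ the product construction is unavailable, so I would thicken $P_G$ into a manifold instead: embed $P_G$ piecewise-linearly in $\R^{n+1}$, take a regular neighbourhood $N$, and set $M = \partial N$. Since $\dim P_G = 2$ and the codimension of $P_G$ in $N$ is $n - 1 \ge 3$, standard PL general-position arguments give that $M$ is connected and $\pi_1(M) = \pi_1(N) = \pi_1(P_G) = G$; the whole procedure is effective and produces a closed triangulated $n$-manifold. \Cref{example:manifold_amenable} then gives $H_\Linfs^n(M;\R) \neq 0$ if and only if $G$ is amenable, and an algorithm for the vanishing of $\Linf$-cohomology would decide amenability of finitely presented groups, again contradicting Adian--Rabin. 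The main obstacle, and the sole reason for the hypothesis $n \ge 4$, is precisely this manifold construction: realizing an arbitrary finitely presented group as $\pi_1$ of a closed simplicial $n$-complex in dimension $n$ rather than $n+1$ requires a codimension-$\ge 3$ thickening of the $2$-dimensional presentation complex, and $n \ge 4$ is exactly the threshold that makes this possible.
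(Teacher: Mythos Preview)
Your proof is correct. For the case $4 \le n = m$ it is essentially the paper's argument, just with the ``well-known'' manifold construction spelled out.

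For the case $1 \le n < m$ your route is genuinely different. The paper does \emph{not} reduce to Adian--Rabin for finiteness here; instead it invokes Novikov's effective sequence $\{Y_i\}$ of $d$-dimensional complexes ($d \ge \max\{6, m-n+1\}$) each of which is either contractible or has infinite $\pi_1$, with contractibility undecidable. The paper then takes $X_i$ to be the $m$-skeleton of $(S^1)^{n-1}\times Y_i$. The point of this dichotomy is that when $Y_i$ is contractible, $(S^1)^{n-1}\times Y_i$ is homotopy equivalent to the $(n{-}1)$-torus, a genuine $K(\Z^{n-1},1)$ of dimension $n-1$, so $H_\Linfs^n$ vanishes by a pure dimension count. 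Your construction with $P_G$ cannot use the dimension argument because the finite-index cover $(S^1)^{n-1}\times\Ucover{P_G}$ is typically not aspherical; you correctly compensate by identifying $H_\Linfs^n$ of that cover with the bounded cohomology of its (simply connected) universal cover and invoking Gromov's vanishing. In effect, you trade one black box (Novikov's sequence) for another (Gromov's theorem, already available in the paper as the ingredient behind \Cref{prop:bounded_and_linf}); your version has the virtue of paralleling the paper's own argument for finitely presented groups and of needing only Adian--Rabin as the source of undecidability. One small cosmetic point: make sure $P_G$ really is $2$-dimensional (e.g.\ insist on at least one relation) so that $X_G$ has dimension exactly $m$.
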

    \begin{proof}
        The results of \cite[\S 10]{VKF1974}, due to S.\ P.\ Novikov, can be rephrased as follows: if $d \ge 6$ is an integer, there is an effective sequence $\{Y_i\}_{i\in\N}$ of finite $d$-dimensional simplicial complexes (which actually are manifolds with boundary, but we do not need this) such that:
        \begin{itemize}
            \item Every $Y_i$ is either contractible or has infinite fundamental group;
            \item There is no algorithm that takes a number $i \in \N$ as input and decides whether $Y_i$ is contractible.
        \end{itemize}
        Let us assume by contradiction that there is an algorithm as in the statement.
        Suppose that $1 \le n < m$.
        Fix $d \ge \max\{6, m-n+1\}$, so that there is a sequence of $d$-dimensional complexes as above.
        Consider the following procedure, that can be carried out for every $i \in \N$:
        \begin{enumerate}
            \item Construct $Y_i$;
            \item Compute a triangulation of $(S^1)^{n-1} \times Y_i$ and define $X_i$ to be its $m$-skeleton;
            \item Determine whether $H_\Linfs^n(X_i;\R) = 0$.
        \end{enumerate}
        Notice that $X_i$ has dimension $m$ because $(S^1)^{n-1} \times Y_i$ has dimension $n-1+d \ge m$.
        Since $n < m$, $H_\Linfs^n(X_i;\R)$ is isomorphic to $H_\Linfs^n((S^1)^{n-1} \times Y_i;\R)$, and by \Cref{prop:finiteness} the latter vanishes if and only if the fundamental group of $Y_i$ is finite.
        But this last condition holds if and only if $Y_i$ is contractible.
        Thus, we have a procedure for detecting whether $Y_i$ is contractible; this contradiction settles the case $1 \le n < m$.
        
        Let us assume now that $4 \le n = m$.
        By \Cref{example:manifold_amenable}, if $X$ is a closed $n$-manifold then $H_\Linfs^n(X;\R) = 0$ if and only if the fundamental group of $X$ is not amenable.
        It is well known that, given a finite presentation defining a group $G$, there is an effective way to construct a triangulated closed $n$-dimensional manifold whose fundamental group is isomorphic to $G$.
        The conclusion now follows from the Adian-Rabin theorem applied to the class of amenable groups.
    \end{proof}
    
    The following are the degree-dimension pairs excluded from \Cref{thm:undecidable_complexes}:
    \begin{itemize}
        \item $n = 0$, $m \in \N$.
        In this case, $H_\Linfs^n(X;\R) \cong \R$.
        \item $n = m = 1$.
        In this case, $H_\Linfs^1(X;\R) = 0$ if and only if $X$ is a tree (if it is not a tree, then its fundamental group is infinite and its \Lcoh{} in degree $1$ is nontrivial by \Cref{prop:finiteness}).
        There are easy algorithms to check this condition.
        \item $n = m \in\{2,3\}$. I do not know whether the vanishing of \Lcoh{} is decidable in these cases.
        If $n = m = 2$ it is possible to prove the following characterization: $H_\Linfs^2(X;\R) = 0$ if and only if $X$ is aspherical and its fundamental group is hyperbolic.
        Taken individually, asphericity and hyperbolicity of the fundamental group of a $2$-complex are undecidable (asphericity is considered in \cite{CM1999}; the statement for hyperbolicity follows from the Adian-Rabin theorem), but this is not enough to conclude that the conjunction of the two properties is undecidable.
    \end{itemize}

    \appendix
    \section{Differential forms bounded on orbits}
    \label{sec:differential}
    
    In this appendix we define the \Ldrc{} of a manifold and prove that it is canonically isomorphic to the singular \Lcoh{} defined in \Cref{sec:definition}.
    
    Throughout the appendix, $X$ is a connected smooth manifold of dimension $n$.
    We denote by $G = \pi_1(X, x_0)$ the fundamental group of $X$ with respect to a fixed basepoint, and by $\Ucover{X}$ the universal cover of $X$, on which $G$ acts by covering transformations.
    Let $\pi:\Ucover{X} \to X$ be the covering map.
    To abbreviate the notation, for any subset $U \subseteq X$ we denote by $\Ucover{U} = \pi^{-1}(U)$ its preimage under $\pi$.
    
    Inspired by \Cref{defn:bounded_on_orbits}, we define differential forms bounded on orbits as follows.
    For simplicity, we only consider real-valued differential forms; in general, forms with values in a Banach space could be considered, as in \cite{Mineyev1999}.
    
    \begin{definition}
        Let $U \subseteq X$ be an open subset.
        Fix an auxiliary Riemannian metric on $U$, and lift it to $\Ucover{U}$.
        Let $\omega \in \Omega^k(\Ucover{U})$ be a smooth $k$-form.
        We say that $\omega$ is bounded on orbits if for every $p \in U$ there is an open neighborhood $U_p \subseteq U$ and a number $L_p \ge 0$ such that
        \begin{equation} \label{eq:bounded_on_orbits}
            \abs{\omega(q)(v_1, \ldots, v_k)}\ \le\ L_p \norm{v_1}\cdot \ldots \cdot \norm{v_k}
        \end{equation}
        for every $q \in \Ucover{U_p}$ and tangent vectors $v_1, \ldots, v_k \in T_q(\Ucover{U_p})$.
        For every $k \in \N$ we define
        $\Omega_\Linfs^k(\Ucover{U}) = \{\omega \in \Omega^k(\Ucover{U})\ \mid\ \omega \text{ and } d\omega \text{ are bounded on orbits} \}$.
    \end{definition}
    
    Some remarks:
    
    \begin{itemize}
        \item The choice of the auxiliary Riemannian metric on $U$ is completely irrelevant;
        \item If $\omega \in \Omega^k(\Ucover{U})$ is the pull-back of a $k$-form defined on $U$, then it is bounded on orbits;
        \item If $X$ is compact, then $\omega \in \Omega^k(\Ucover{X})$ is bounded on orbits if and only if it is bounded globally, meaning that there is a constant $L \ge 0$ such that $\abs{\omega(q)(v_1, \ldots, v_k)} \le L \norm{v_1} \cdot \ldots \cdot \norm{v_k}$
        for every $q \in \Ucover{X}$ and $v_1, \ldots, v_k \in T_q(\Ucover{X})$.
    \end{itemize}
    
    \begin{definition}
        Let $X$ be a connected smooth manifold.
        We define the \Ldrc{} of $X$ as the cohomology of the complex $\Omega_\Linfs^\bullet(\Ucover{X})$.
        We denote it by $H_\Ldr^\bullet(X)$.
    \end{definition}
    
    In the rest of the appendix we present a proof of the following theorem.
    Our proof is an adaptation of Bredon's proof of the standard de Rham's theorem described in \cite{Bredon1993}.
    
    \begin{theorem}
        \label{thm:bounded_de_rham}
        Let $X$ be a connected smooth manifold.
        Then there is a canonical isomorphism $H_\Linfs^\bullet(X;\R) \cong H_\Ldr^\bullet(X)$.
    \end{theorem}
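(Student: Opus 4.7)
Following the author's hint, the strategy is to establish a Poincaré lemma and a Mayer-Vietoris sequence for the complex $\Omega^\bullet_\Linfs(\Ucover{X})$, and then to compare with $H^\bullet_\Linfs(X;\R)$ via the standard good-cover induction.

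First I would define an integration map $I : \Omega^k_\Linfs(\Ucover{X}) \to C^k_\Linfs(\Ucover{X};\R)$ by $I(\omega)(\sigma) = \int_{\Simplex{k}} \sigma^*\omega$ on smooth singular simplices, extended to continuous ones by the classical smoothing procedure. Stokes' theorem makes $I$ a chain map. The assumption that $\omega$ is bounded on orbits, combined with the compactness of $\pi(\sigma(\Simplex{k}))$ and a finite subcover by neighborhoods of the form $U_p$ appearing in \eqref{eq:bounded_on_orbits}, furnishes a uniform bound for $\abs{I(\omega)(g\sigma)}$ as $g$ ranges over $G$, so $I$ lands in the subcomplex of cochains bounded on orbits.

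Next I would prove the Poincaré lemma: for an open $V \subseteq X$ that is smoothly contractible and evenly covered by $\pi$, the $G$-action freely permutes the diffeomorphic components of $\Ucover{V}$, so the classical cone homotopy operator on one component extends $G$-equivariantly to all of $\Ucover{V}$. The local bound \eqref{eq:bounded_on_orbits} on $\omega$ integrates to a matching local bound on its primitive, giving $H^k_\Ldr(V) = 0$ for $k \ge 1$; in degree $0$ both $H^0_\Ldr(V)$ and $H^0_\Linfs(V;\R)$ are canonically $\Linf(G,\R)$, on which $I$ is the identity. For Mayer-Vietoris I would use a partition of unity subordinate to $\{U,V\}$ on $X$, pulled back to a $G$-invariant partition of unity on $\Ucover{X}$; multiplication by such $G$-invariant cutoffs preserves boundedness on orbits, yielding the expected short exact sequence
\[ 0 \to \Omega^\bullet_\Linfs(\Ucover{X}) \to \Omega^\bullet_\Linfs(\Ucover{U}) \oplus \Omega^\bullet_\Linfs(\Ucover{V}) \to \Omega^\bullet_\Linfs(\Ucover{U \cap V}) \to 0, \]
and hence the Mayer-Vietoris long exact sequence. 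The analogous sequence for $H^\bullet_\Linfs(-;\R)$ is standard, and $I$ is clearly natural, so the two long exact sequences fit into a ladder.

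Finally, I would pick a good open cover $\{U_i\}$ of $X$ consisting of smoothly contractible, evenly covered open sets whose finite intersections enjoy the same properties (for instance, geodesically convex neighborhoods for an auxiliary complete Riemannian metric), and run the standard Mayer-Vietoris / \v{C}ech double complex argument from \cite{Bredon1993}: the Poincaré lemma trivializes both sides on each intersection and $I$ is an isomorphism there, while the Mayer-Vietoris sequences piece these identifications together via the five lemma (directly in the finite-cover case, through a spectral sequence or exhaustion in general). The main obstacle will be the quantitative Poincaré lemma: it is not enough that the classical cone operator produces a smooth primitive, one must check that the local bounds on $\omega$ on preimages $\Ucover{U_p}$ of small neighborhoods propagate (with a possibly different constant depending on the contracting homotopy) to analogous local bounds on the primitive. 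A careful choice of contracting homotopy with uniformly controlled derivatives near each point of $V$ should suffice.
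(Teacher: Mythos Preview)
Your approach is essentially the paper's: define an integration map via a smoothing operator, prove a Poincar\'e lemma and a Mayer--Vietoris short exact sequence for $\Omega_\Linfs^\bullet$, and run the Bredon-style induction over a family of open sets (the paper uses precisely \cite[Lemma V.9.3]{Bredon1993}, which packages the disjoint-union and finite-union steps so that no spectral sequence or exhaustion argument is needed).

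However, you have misidentified the main obstacle. The quantitative Poincar\'e lemma is in fact routine: the explicit cone formula on a convex set, together with compactness of the segment from the cone point, gives the required orbitwise bound immediately (the paper's \Cref{prop:poincare}). What is \emph{not} routine is your claim that ``$I$ is clearly natural, so the two long exact sequences fit into a ladder.'' The integration map $I$ involves a choice of smoothing operator $\Sm_U$ on each open set $U$, and there is no reason for $\Sm_U$ to be compatible with the restriction $\Sm_V$ to a smaller open $V\subseteq U$. Even the commutativity of the squares induced by inclusions therefore requires an argument (the paper's \Cref{lemma:only_smooth}: two cocycles agreeing on smooth simplices are cohomologous). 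More seriously, the square involving the connecting homomorphisms does not commute on the nose: the Mayer--Vietoris splitting on the singular side uses a barycentric-subdivision operator, not the same smoothing, so tracing a class around the square produces two cocycles that agree only on \emph{small smooth} simplices. The paper handles this with a separate lemma (\Cref{lemma:smooth_and_small}) combining a $G$-equivariant subdivision with the smoothing argument. You should flag this step rather than the Poincar\'e lemma as the place where care is required.

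A smaller point: in your Mayer--Vietoris sketch, ``multiplication by $G$-invariant cutoffs preserves boundedness on orbits'' is true for $\omega_1=(\varphi_2\circ\pi)\omega$, but membership in $\Omega_\Linfs^k$ also demands that $d\omega_1$ be bounded on orbits; you must check that the wedge $d(\varphi_2\circ\pi)\wedge\omega$ is bounded on orbits, which uses that $d(\varphi_2\circ\pi)$ is $G$-invariant (the paper's \Cref{prop:mayer_vietoris} spells this out).
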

    
    Let us fix some notation.
    For any open subset $U \subseteq X$, we denote by $H_\Omega^\bullet(U)$ the cohomology of the complex $\Omega_\Linfs^\bullet(\Ucover{U})$.
    For $U = X$ this coincides with $H_\Ldr^\bullet(X)$.
    We use a different notation because, in general, $\Ucover{U} = \pi^{-1}(U) \subseteq \Ucover{X}$ is not the universal cover of $U$, and therefore $H_\Omega^\bullet(U)$ is not the \Ldrc{} of $U$.
    Notice also that $U$ could be disconnected, and in principle we have defined the \Ldrc{} only for connected manifolds.
    In the same spirit, for every $k \in \N$ we set $C_\Linfs^k(\Ucover{U}) = \{ \alpha \in C^k(\Ucover{U};\R)\ \mid\ \alpha \text{ is bounded on orbits}\footnote{Of course, a singular cochain $\alpha \in C^k(\Ucover{U};\R)$ is bounded on orbits when for every $\sigma \in \Simplex{k}(\Ucover{U})$ the set $\{\alpha(g \cdot \sigma)\ \mid\ g \in G\} \subseteq \R$ is bounded.}\}$ and define $H_C^\bullet(U)$ as the cohomology of the complex $C_\Linfs^\bullet(\Ucover{U})$.
    In particular $H_C^\bullet(X) = H_\Linfs^\bullet(X;\R)$.
    
    \begin{remark}
        \label{rmk:local_coefficients}
        From the point of view of cohomology with local coefficients, $H_\Linfs^\bullet(X;\R)$ is the cohomology of $X$ in a system of local coefficients assigning to any $x \in X$ the $\R[\pi_1(X,x)]$-module $\Linf(\pi_1(X,x),\R)$.
        If $U \subseteq X$ is an open subset, then $H_C^\bullet(U)$ is the cohomology of $U$ in the same system of local coefficients, restricted to $U\subseteq X$.
    \end{remark}
    
    The isomorphism of \Cref{thm:bounded_de_rham} is defined below via integration of differential forms on singular simplices.
    Since the integral can be computed only on smooth simplices, before proceeding we need to introduce a suitable smoothing operator.
    Fix an open set $U \subseteq X$.
    A singular simplex $\sigma:\Simplex{k}\to \Ucover{U}$ is said to be smooth if it admits local smooth extensions on open subsets of the $k$-dimensional hyperplane containing $\Simplex{k} \subseteq \R^{k+1}$.
    As is customary, we denote by $C_\bullet(\Ucover{U};\R)$ the complex of singular chains in $\Ucover{U}$.
    The technical tool we need is a $G$-equivariant chain map $\Sm_U:C_\bullet(\Ucover{U};\R) \to C_\bullet(\Ucover{U};\R)$ such that:
    \begin{itemize}
        \item If $\sigma \in \Simplex{k}(\Ucover{U})$, then $\Sm_U(\sigma)$ is a linear combination of smooth simplices;
        \item If $\sigma\in\Simplex{k}(\Ucover{U})$ is smooth, then $\Sm_U(\sigma) = \sigma$;
        \item $\Sm_U$ is $G$-equivariantly chain homotopic to the identity.
    \end{itemize}
    See, \eg, \cite[Proof of Theorem 18.7]{Lee2012} for the construction of such a chain map.
    The issue of the $G$-equivariance (of the chain map and of the homotopy) is not addressed there, but is easily solved with minor modifications in the proof.
    Now we define a chain map $\mathcal{I}_U:\Omega_\Linfs^\bullet(\Ucover{U}) \to C_\Linfs^\bullet(\Ucover{U})$.
    For any $\omega \in \Omega_\Linfs^k(\Ucover{U})$ we set
    \[ \mathcal{I}_U(\omega)(\sigma)\ =\ \int_\Simplex{k} \sigma^*\omega \]
    if $\sigma \in \Simplex{k}(\Ucover{U})$ is smooth, then extend $\mathcal{I}_U(\omega)$ by linearity on the subspace of $C_k(\Ucover{U};\R)$ generated by smooth simplices, and finally set $\mathcal{I}_U(\omega)(c) = \mathcal{I}_U(\omega)(\Sm_U(c))$ for any singular chain $c \in C_k(\Ucover{U};\R)$.
    The cochain $\mathcal{I}_U(\omega)$ is bounded on the $G$-orbit generated by a smooth $\sigma$: since $\Simplex{k}$ is compact, the constant $L_p$ in (\ref{eq:bounded_on_orbits}) can be choosen independently from $p$ in a neighborhood of $\pi(\sigma(\Simplex{k})) \subseteq U$.
    Boundedness on generic orbits is then implied by the $G$-equivariance of the smoothing operator.
    Finally, the fact that $\mathcal{I}_U$ is a chain map is a consequence of Stokes' theorem.
    
    For every $k \in \N$ we denote by $\mathcal{I}_U^k:H_\Omega^k(U) \to H_C^k(U)$ the induced map in cohomology.
    We will see that these maps are all isomorphisms; in particular, for $U = X$, this will give \Cref{thm:bounded_de_rham}.
    
    \begin{lemma}
        \label{lemma:only_smooth}
        Let $U\subset X$ be an open subset and let $\alpha,\beta \in C_\Linfs^k(\Ucover{U})$ be two singular cocycles assuming the same values on smooth $k$-simplices.
        Then $[\alpha] = [\beta] \in H_C^k(U)$. 
    \end{lemma}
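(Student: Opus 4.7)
The plan is to exploit the $G$-equivariant chain homotopy between the smoothing operator $\Sm_U$ and the identity on $C_\bullet(\Ucover{U};\R)$ to dualize into a cochain homotopy, and then use it to explicitly express $\alpha - \beta$ as a coboundary of something bounded on orbits.

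Set $\gamma = \alpha - \beta \in C_\Linfs^k(\Ucover{U})$; it is a cocycle that vanishes on every smooth $k$-simplex. Since $\Sm_U$ is $G$-equivariantly chain homotopic to the identity, fix such a homotopy $h_\bullet:C_\bullet(\Ucover{U};\R) \to C_{\bullet+1}(\Ucover{U};\R)$, so that $\Bd h + h\Bd = \Sm_U - \Id$ as maps in every degree. Dualize by defining $h^*(\varphi)(c) = \varphi(h(c))$; this yields a cochain homotopy satisfying $\Cbd h^* + h^* \Cbd = \Sm_U^* - \Id$ on $C^\bullet(\Ucover{U};\R)$. Applying this identity to $\gamma$ and using $\Cbd \gamma = 0$ gives
\[ \Sm_U^*(\gamma) - \gamma\ =\ \Cbd\bigl(h^*(\gamma)\bigr). \]
Because $\Sm_U(\sigma)$ is always a linear combination of smooth simplices and $\gamma$ vanishes on smooth simplices, $\Sm_U^*(\gamma) = 0$, so $\gamma = -\Cbd(h^*(\gamma))$.

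The only thing left is to check that $h^*(\gamma)$ lies in $C_\Linfs^{k-1}(\Ucover{U})$, i.e.\ is bounded on orbits. This is the one step that requires the equivariance of the homotopy. Fix $\tau \in \Simplex{k-1}(\Ucover{U})$ and write $h(\tau) = \sum_{i=1}^N \lambda_i \sigma_i$ as a finite linear combination of singular $k$-simplices. By $G$-equivariance of $h$, for every $g \in G$ one has $h(g\cdot \tau) = \sum_i \lambda_i (g\cdot \sigma_i)$, hence
\[ h^*(\gamma)(g\cdot\tau)\ =\ \sum_{i=1}^N \lambda_i\, \gamma(g\cdot\sigma_i). \]
Since $\gamma$ is bounded on each of the finitely many orbits $G\cdot\sigma_i$, the set $\{h^*(\gamma)(g\cdot\tau) \mid g \in G\}$ is bounded. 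Therefore $h^*(\gamma) \in C_\Linfs^{k-1}(\Ucover{U})$, and $\gamma$ is a coboundary in the complex of cochains bounded on orbits, proving $[\alpha] = [\beta] \in H_C^k(U)$.

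The main (and essentially the only non-formal) obstacle is the verification that $h^*(\gamma)$ is bounded on orbits; everything else is dualization of the chain homotopy identity. This is why the $G$-equivariance of the smoothing chain map and of its homotopy to the identity, emphasized in the setup, is indispensable here.
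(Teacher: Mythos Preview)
Your proof is correct and follows essentially the same approach as the paper: both dualize the $G$-equivariant chain homotopy between $\Sm_U$ and the identity, and use equivariance to verify that the resulting primitive is bounded on orbits. The only cosmetic difference is that the paper applies the argument to $\alpha$ (and $\beta$) separately to obtain $[\alpha]=[\alpha\circ\Sm_U]$, whereas you work directly with $\gamma=\alpha-\beta$; the underlying mechanism is identical.
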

    \begin{proof}
        Let $r:C_\bullet(\Ucover{U};\R) \to C_{\bullet+1}(\Ucover{U};\R)$ be a $G$-equivariant homotopy between $\Sm_U$ and the identity, such that $\Sm_U - \Id = \Bd \circ r + r \circ \Bd$.
        
        The cocycle $\alpha \circ \Sm_U$ is bounded on orbits, because $\alpha$ is bounded on orbits and $\Sm_U$ is $G$-equivariant.
        For the same reason, the cochain $\alpha \circ r$ is bounded on orbits.
        We can write $\alpha - \alpha \circ \Sm_U = \alpha \circ \Bd \circ r + \alpha \circ r \circ \Bd = (\Cbd\alpha)\circ r + \Cbd(\alpha\circ r) = \Cbd(\alpha\circ r)$.
        Therefore, $[\alpha] = [\alpha \circ \Sm_U] \in H_C^k(U)$, and the latter only depends on the values attained by $\alpha$ on smooth simplices.
        The conclusion follows.
    \end{proof}
    
    \Cref{lemma:only_smooth} implies, in particular, that different choices of the smoothing operator $\Sm_U$ lead to the same maps $\mathcal{I}_U^k:H_\Omega^k(U) \to H_C^k(U)$ in cohomology.
    Moreover, these maps commute with the maps induced by inclusions of open subsets.
    In fact, let $U\subseteq V \subseteq X$ be open subsets, and call $i:U\to V$ the inclusion.
    The pull-back via $i$ of a differential form or a singular cochain which is bounded on orbits is bounded on orbits.
    The maps induced by $i$ in cohomology fit in the following diagram:
    \[\begin{tikzcd}
        H_\Omega^k(V) \ar[r,"i^*"] \ar[d,"\mathcal{I}_V^k"] & H_\Omega^k(U) \ar[d,"\mathcal{I}_U^k"] \\
        H_C^k(V) \ar[r,"i^*"] & H_C^k(U).
    \end{tikzcd}\]
    Given any $\omega \in \Omega_\Linfs^k(\Ucover{V})$, it is clear that $(i^* \mathcal{I}_V\omega)(\sigma) = (\mathcal{I}_U i^* \omega)(\sigma)$ for every smooth $\sigma \in \Simplex{k}(\Ucover{U})$.
    Then $[i^*\mathcal{I}_V\omega] = [\mathcal{I}_U i^* \omega] \in H_C^k(U)$ by \Cref{lemma:only_smooth}.
    Hence, the diagram commutes.
    
    In a technical step of the proof of \Cref{thm:bounded_de_rham} we will also need the following refinement of \Cref{lemma:only_smooth}.
    
    \begin{lemma}
        \label{lemma:smooth_and_small}
        Let $U_1, U_2 \subseteq X$ be two open subsets, and let $U = U_1 \cup U_2$.
        We say that a simplex $\sigma \in \Simplex{k}(\Ucover{U})$ is small if its image is contained in $\Ucover{U_1}$ or $\Ucover{U_2}$.
        Let $\alpha,\beta \in C_\Linfs^k(\Ucover{U})$ be two singular cocycles assuming the same values on small smooth $k$-simplices.
        Then $[\alpha] = [\beta] \in H_C^k(U)$.
    \end{lemma}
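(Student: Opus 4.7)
The plan is to imitate the proof of \Cref{lemma:only_smooth}, replacing the smoothing operator $\Sm_U$ with a $G$-equivariant chain map $T_U:C_\bullet(\Ucover{U};\R) \to C_\bullet(\Ucover{U};\R)$ with the following properties: $T_U(\sigma)$ is a linear combination of small smooth simplices for every $\sigma \in \Simplex{k}(\Ucover{U})$; $T_U$ fixes small smooth simplices; and $T_U$ is $G$-equivariantly chain homotopic to the identity via some operator $r_U:C_\bullet(\Ucover{U};\R)\to C_{\bullet+1}(\Ucover{U};\R)$. Given such a $T_U$, the argument of \Cref{lemma:only_smooth} applies verbatim: for a cocycle $\alpha \in C_\Linfs^k(\Ucover{U})$ one has $\alpha - \alpha \circ T_U = \Cbd(\alpha \circ r_U)$, and since $\alpha \circ r_U$ is bounded on orbits whenever $\alpha$ is and $r_U$ is $G$-equivariant, one concludes $[\alpha] = [\alpha \circ T_U] \in H_C^k(U)$. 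Applying this to both $\alpha$ and $\beta$ and using $\alpha \circ T_U = \beta \circ T_U$ (which holds because $T_U$ outputs small smooth chains and $\alpha, \beta$ agree on these) yields $[\alpha] = [\beta]$.

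To construct $T_U$ I would combine iterated barycentric subdivision with the smoothing operator. Barycentric subdivision $\mathrm{sd}:C_\bullet \to C_\bullet$ is natural and therefore automatically $G$-equivariant, and it comes with a natural, hence $G$-equivariant, chain homotopy $h$ to the identity. For any singular simplex $\sigma \in \Simplex{k}(\Ucover{U})$ the image $\sigma(\Simplex{k})$ is compact, so the Lebesgue number lemma applied to the open cover $\{\Ucover{U_1},\Ucover{U_2}\}$ of $\Ucover{U}$ yields a smallest integer $n(\sigma) \ge 0$ such that $\mathrm{sd}^{n(\sigma)}(\sigma)$ is a linear combination of small simplices. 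Crucially, $g \cdot \Ucover{U_i} = \Ucover{U_i}$ for every $g \in G$ (since $\Ucover{U_i} = \pi^{-1}(U_i)$), so $n(g \cdot \sigma) = n(\sigma)$, and the assignment $\sigma \mapsto \mathrm{sd}^{n(\sigma)}(\sigma)$ extends linearly to a $G$-equivariant map. I then define $T_U(\sigma) = \Sm_U\bigl(\mathrm{sd}^{n(\sigma)}(\sigma)\bigr)$ on simplices and extend linearly; this is a chain map (being the composition of chain maps on each orbit-class of simplices) that lands in small smooth chains and fixes small smooth simplices. The chain homotopy $r_U$ is obtained as a telescoping sum of the subdivision homotopies $h + h \circ \mathrm{sd} + \dots + h \circ \mathrm{sd}^{n(\sigma)-1}$ applied to $\sigma$, concatenated with the homotopy from $\Sm_U$ to the identity; $G$-equivariance of $r_U$ follows from $G$-invariance of $n(\sigma)$ and $G$-equivariance of $h$ and of the smoothing homotopy.

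The main subtlety I anticipate is making sure the telescoped operator $r_U$ is genuinely $G$-equivariant and that $\alpha \circ r_U$ is bounded on orbits. Boundedness on orbits requires only a bound along each single $G$-orbit, and since $n(\sigma)$ is constant on orbits and $r_U$ on a given orbit is a finite, orbit-constant combination of $G$-equivariant pieces, the standard argument from \Cref{lemma:only_smooth} transfers without modification. Checking that $T_U$ is still a chain map after patching together the different exponents $n(\sigma)$ is a routine but careful verification; one way to avoid it is to define $T_U$ and $r_U$ first on a $G$-equivariant set of orbit representatives using the truncated telescope corresponding to $n(\sigma)$, then extend equivariantly, and finally check $\Cbd \circ T_U = T_U \circ \Bd$ simplex by simplex using the fact that faces of a simplex needing $n(\sigma)$ subdivisions need at most $n(\sigma)$ subdivisions themselves.
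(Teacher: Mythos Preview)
Your approach has a gap: there is no reason the smoothing operator $\Sm_U$ preserves smallness. It is constructed once on all of $\Ucover{U}$, and while Whitney approximation can in principle be made $C^0$-small, the fixed operator $\Sm_U$ carries no guarantee that $\Sm_U(\tau)$ is a chain supported in $\Ucover{U_i}$ whenever $\tau$ is. Hence your $T_U = \Sm_U \circ \mathrm{sd}^{n(\cdot)}$ need not land in small chains, and the key equality $\alpha \circ T_U = \beta \circ T_U$ can fail. The paper sidesteps this by exploiting the compatibility in the opposite direction: barycentric subdivision trivially preserves \emph{smoothness} (the affine pieces of a smooth simplex are smooth). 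So one uses a subdivision operator $s$ alone, required to satisfy: $s(\sigma)$ is small for every $\sigma$; $s(\sigma)$ is smooth whenever $\sigma$ is; and $s$ is $G$-equivariantly homotopic to the identity. Then $[\alpha] = [\alpha \circ s]$ as in your argument, and $\alpha \circ s$ evaluated on a smooth simplex depends only on the values of $\alpha$ on small smooth simplices; now \Cref{lemma:only_smooth} gives $[\alpha \circ s] = [\beta \circ s]$. No composite operator is needed.

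A secondary issue: the naive assignment $\sigma \mapsto \mathrm{sd}^{n(\sigma)}(\sigma)$ is \emph{not} a chain map, since $\Bd(\mathrm{sd}^{n(\sigma)}\sigma) = \mathrm{sd}^{n(\sigma)}(\Bd\sigma)$ while faces may satisfy $n(\Bd_i\sigma) < n(\sigma)$. Hatcher's construction (Proposition 2.21) repairs this with explicit correction terms that are absorbed into the chain homotopy; your final paragraph does not mention these, and the ``routine'' verification you describe would in fact fail as written.
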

    \begin{proof}
        The main technical tool is a $G$-equivariant subdivision operator, that is a $G$-equivarant chain map $s:C_\bullet(\Ucover{U};\R) \to C_\bullet(\Ucover{U};\R)$ such that:
        \begin{itemize}
            \item If $\sigma \in \Simplex{k}(\Ucover{U})$, then $s(\sigma)$ is a linear combination of small simplices;
            \item If $\sigma$ is smooth, then $s(\sigma)$ is smooth;
            \item $s - \Id = r \circ \Bd + \Bd \circ r$ for some $G$-equivariant chain homotopy $r:C_\bullet(\Ucover{U};\R) \to C_{\bullet+1}(\Ucover{U};\R)$.
        \end{itemize}
        The existence of such a chain map is based on a barycentric subdivision process, and is the key step in the standard proofs of the excision property or the Mayer-Vietoris theorem for singular homology (see, \eg, \cite[Proposition 2.21]{Hatcher2002}; $G$-equivariance and smoothness are not discussed, but the barycentric subdivision process used there automatically preserves smoothness and gives a $G$-equivariant chain map and homotopy).
        
        Exactly as in the proof of \Cref{lemma:only_smooth}, $\alpha$ and $\alpha \circ s$ represent the same class in $H_C^k(U)$.
        The values attained by $\alpha \circ s$ on smooth simplices only depend on the values attained by $\alpha$ on small smooth simplices.
        An analogous statement holds for $\beta \circ s$.
        By \Cref{lemma:only_smooth}, $[\alpha] = [\alpha \circ s] = [\beta \circ s] = [\beta] \in H_C^k(U)$.
    \end{proof}
    
    The main ingredients for the proof of \Cref{thm:bounded_de_rham} are suitable forms of the Poincaré lemma and of the Mayer-Vietoris sequence for the two cohomology theories $H_C$ and $H_\Omega$.
    Here, we only include the proofs for $H_\Omega$, since the analogous results for $H_C$ follow from the general theory of cohomology with local coefficients (see \Cref{rmk:local_coefficients}).
    We start with the Poincaré lemma.
    
    \begin{proposition}
        \label{prop:poincare}
        Let $U \subseteq X$ be a nonempty open subset, diffeomorphic to a convex open subset $W \subseteq \R^n$.
        Then $H_\Omega^k(U) = 0$ for every $k \ge 1$.
    \end{proposition}
    \begin{proof}
        Without loss of generality we assume that $0 \in W$.
        Since $U$ is simply connected, every component of $\Ucover{U}$ is diffeomorphic to $U$.
        In what follows, $U$ and the components of $\Ucover{U}$ are identified with $W$; in particular they are endowed with the euclidean metric, and the tangent space at every $q \in \Ucover{U}$ is identified with $\R^n$.
        
        Let $\omega \in \Omega_\Linfs^k(\Ucover{U})$ be closed: we have to show that $\omega$ has a primitive which is bounded on orbits.
        We know that for every point in $U$ there is a neighborhood $U'\subseteq U$ and a number $L \ge 0$ such that
        \begin{equation}
            \label{eq:poincare-boo}
            \abs{\omega(q)(v_1, \ldots, v_k)}\ \le\ L \norm{v_1}\cdot\ldots\cdot\norm{v_k}
        \end{equation}
        for every $q \in \Ucover{U'}$ and $v_1, \ldots, v_k \in \R^n$.
        Fix a connected component $U_0 \subseteq \Ucover{U}$ and define a primitive $\mu$ for the restriction of $\omega$ on $U_0$ setting
        \[ \mu(q_0)(v_1, \ldots, v_{k-1})\ =\ \int_0^1 \omega(tq_0)(q_0,tv_1,\ldots,tv_{k-1}) dt \]
        for every $q_0 \in U_0$ and $v_1, \ldots, v_k \in \R^n$.
        This is the same formula used, for instance, in \cite[Chapter V, Lemma 9.2]{Bredon1993} for the proof of the standard Poincaré lemma.
        By compactness, there is an $L \ge 0$ such that the inequality (\ref{eq:poincare-boo}) holds in the neighborhood of the segment joining $0$ and $q_0$.
        Let $M > \norm{q_0}$.
        Then
        \[ \abs{\mu(q_0)(v_1, \ldots, v_{k-1})}\ \le\ \left(\int_0^1 t^{k-1}dt\right) \cdot L M \norm{v_1}\cdot\ldots\cdot\norm{v_{k-1}} \]
        for every $v_1, \ldots, v_{k-1} \in \R^n$.
        The same inequality holds, with the same constants $L$ and $M$, in a neighborhood of $q_0$.
        Moreover, $L$ and $M$ do not depend on the component of $U_0 \subseteq \Ucover{U}$.
        Therefore, defining a primitive of $\omega$ using the same formula in every component, we get a primitive which is bounded on orbits.
    \end{proof}
    
    \begin{proposition}
        \label{prop:mayer_vietoris}
        Let $U_1, U_2 \subseteq X$ be two open subsets, and set $U = U_1 \cup U_2$ and $V = U_1 \cap U_2$.
        Denote by $i_1:\Ucover{U_1} \to \Ucover{U}$, $i_2:\Ucover{U_2} \to \Ucover{U}$, $j_1: \Ucover{V} \to \Ucover{U_1}$ and $j_2:\Ucover{V} \to \Ucover{U_2}$ the inclusions.
        Then for every $k \ge 0$ the sequence
        \[\begin{tikzcd}
            0 \arrow[r] & {\Omega_\Linfs^k(\Ucover{U})} \arrow[r,"{\begin{pmatrix}i_1^*,i_2^*\end{pmatrix}}"] & {\Omega_\Linfs^k(\Ucover{U_1}) \oplus \Omega_\Linfs^k(\Ucover{U_2})} \arrow[r,"j_1^*-j_2^*"] & {\Omega_\Linfs^k(\Ucover{V})} \arrow[r] & 0
        \end{tikzcd}\]
        is exact.
        These sequences induce a long exact sequence in cohomology.
    \end{proposition}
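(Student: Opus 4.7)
The plan is to imitate the classical proof of the Mayer--Vietoris sequence for the de Rham complex, exploiting the fact that being bounded on orbits is a purely local condition on the base: the defining inequality \eqref{eq:bounded_on_orbits} only needs to hold on arbitrarily small neighborhoods of each point $p \in U$, which is compatible with cut-off and gluing by partitions of unity.

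For the injectivity of $(i_1^*,i_2^*)$, a form $\omega \in \Omega_\Linfs^k(\Ucover{U})$ with $i_1^*\omega=0$ and $i_2^*\omega=0$ vanishes pointwise on $\Ucover{U}=\Ucover{U_1}\cup\Ucover{U_2}$. For exactness in the middle, a pair $(\omega_1,\omega_2)$ with $j_1^*\omega_1=j_2^*\omega_2$ glues to a unique smooth form $\omega$ on $\Ucover{U}$, and orbit-boundedness of $\omega$ (and of $d\omega$) at a point $p\in U$ is checked on any neighborhood of $p$ contained in $U_i$ for a suitable $i\in\{1,2\}$, and is therefore inherited from the corresponding property of $\omega_i$.

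The main step is surjectivity of $j_1^*-j_2^*$. Pick a smooth partition of unity $\{\rho_1,\rho_2\}$ on $U$ subordinate to $\{U_1,U_2\}$, with $\Supp(\rho_i)$ closed in $U$ and contained in $U_i$, and let $\widetilde{\rho_i}=\pi^*\rho_i$ be the corresponding $G$-invariant smooth function on $\Ucover{U}$. Given $\eta\in\Omega_\Linfs^k(\Ucover{V})$, define
\[
    \omega_1 \;=\; \widetilde{\rho_2}\cdot \eta\quad\text{on }\Ucover{V}, \qquad \omega_2 \;=\; -\widetilde{\rho_1}\cdot \eta\quad\text{on }\Ucover{V},
\]
extended by zero outside, which yields smooth forms on $\Ucover{U_1}$ and $\Ucover{U_2}$ respectively because $\widetilde{\rho_2}$ vanishes on a neighborhood of $\Ucover{U_1}\setminus\Ucover{V}$ and symmetrically for $\widetilde{\rho_1}$. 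By construction $j_1^*\omega_1-j_2^*\omega_2=(\widetilde{\rho_1}+\widetilde{\rho_2})\eta=\eta$ on $\Ucover{V}$. The hard part is verifying $\omega_i\in\Omega_\Linfs^k$: around any point $p\in U_1$ with $p\notin\Supp(\rho_2)$ the form $\omega_1$ vanishes, so we may restrict to $p\in\Supp(\rho_2)\cap U_1\subseteq V$; then a small neighborhood $U_p$ of $p$ with compact closure in $V$ carries uniform bounds on $\rho_2$, $d\rho_2$, $\eta$ and $d\eta$ (the latter two via the orbit-boundedness of $\eta$), and the Leibniz formula $d\omega_1=d\widetilde{\rho_2}\wedge\eta+\widetilde{\rho_2}\,d\eta$ gives the required bounds for both $\omega_1$ and $d\omega_1$ on $\Ucover{U_p}$. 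The same argument handles $\omega_2$.

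Once the short exact sequence of cochain complexes is established, the long exact sequence follows from the standard zigzag lemma in homological algebra, with no further analytic input.
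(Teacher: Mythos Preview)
Your proof is correct and follows essentially the same route as the paper's own argument: injectivity and middle exactness use that orbit-boundedness is a local condition, and surjectivity is obtained via a partition of unity $\{\rho_1,\rho_2\}$ on the base pulled back to $G$-invariant functions $\widetilde{\rho_i}$, with $\omega_1=\widetilde{\rho_2}\,\eta$, $\omega_2=-\widetilde{\rho_1}\,\eta$, and the Leibniz rule $d\omega_1=d\widetilde{\rho_2}\wedge\eta+\widetilde{\rho_2}\,d\eta$ supplying the required orbitwise bounds. The only cosmetic difference is that you organize the boundedness check by splitting into the cases $p\notin\Supp(\rho_2)$ and $p\in\Supp(\rho_2)\cap U_1\subseteq V$, whereas the paper phrases it as ``bounded function times form bounded on orbits'' plus an explicit estimate for the wedge term.
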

    \begin{proof}
        The composition of the two maps is zero because $i_1\circ j_1$ and $i_2 \circ j_2$ both coincide with the inclusion of $\Ucover{V}$ into $\Ucover{U}$.
        The first map is injective: if the restrictions of $\omega \in \Omega_\Linfs^k(\Ucover{U})$ to $\Ucover{U_1}$ and $\Ucover{U_2}$ are both zero, then $\omega = 0$.
        There is exactness on the middle term: if $\omega_1 \in \Omega_\Linfs^k(\Ucover{U_1})$ and $\omega_2 \in \Omega_\Linfs^k(\Ucover{U_2})$ coincide on the intersection $\Ucover{V}$, then they define a $k$-form $\omega \in \Omega^k(\Ucover{U})$.
        Being bounded on orbits is a local property, hence $\omega \in \Omega_\Linfs^k(\Ucover{U})$.
        
        The surjectivity of the second map is more delicate.
        Fix $\omega \in \Omega_\Linfs^k(\Ucover{V})$.
        Let $\varphi_1,\varphi_2:U \to [0,1]$ be two smooth functions giving a partition of unity, with $\Supp\varphi_1 \subseteq U_1$ and $\Supp\varphi_2 \subseteq U_2$.
        Define $\omega_1 = (\varphi_2 \circ \pi) \cdot \omega$; it is a smooth form in $\Omega^k(\Ucover{U_1})$, vanishing outside $\Ucover{V}$.
        Notice that $\omega_1$ is bounded on orbits, because it is the product of a bounded function with a $k$-form which is bounded on orbits.
        To conclude that $\omega_1 \in \Omega_\Linfs^k(\Ucover{U_1})$ it remains to show that $d\omega_1 = d(\varphi_2\circ \pi)\wedge\omega + (\varphi_2 \circ \pi) \cdot d\omega$ is bounded on orbits.
        The second summand is bounded on orbits, since it is the product of a bounded function with $d\omega$, which is bounded on orbits by assumption.
        As for the first summand, $d(\varphi_2\circ\pi) = \pi^*(d\varphi_2)$ is bounded on orbits because it is a $G$-invariant form, and $\omega$ is bounded on orbits by assumption.
        Hence, for every point in $V$ there is a neighborhood $V'\subseteq V$ and a number $L \ge 0$ such that the inequalities
        \begin{align*}
            \abs{d(\varphi_2 \circ \pi)(q)(v)}\ &\le\ L \norm{v} \\
            \abs{\omega(q)(v_1, \ldots, v_k)}\ &\le\ L \norm{v_1}\cdot\ldots\cdot\norm{v_k}
        \end{align*}
        hold for every $q \in \Ucover{V'}$ and $v, v_1, \ldots, v_k \in T_q(\Ucover{V'})$.
        Now we have
        \begin{align*}
            &\abs{(d(\varphi_2\circ \pi)\wedge \omega)(q)(v_0,\ldots,v_k)}\ =\\
            =\ &\abs{\sum_{i=0}^k (-1)^i\ d(\varphi_2\circ \pi)(q)(v_i)\cdot\omega(q)(v_0,\ldots,v_{i-1},v_{i+1},\ldots,v_k)}\ \le\\
            \le\ &(k+1)\cdot L^2 \cdot \norm{v_0}\cdot\ldots\cdot\norm{v_k}\,,
        \end{align*}
        which means that $d(\varphi_2\circ \pi)\wedge \omega$ is bounded on orbits.
        We conclude that $\omega_1 \in \Omega_\Linfs^k(\Ucover{U_1})$.
        In the same way, $\omega_2 = -(\varphi_1 \circ \pi)\cdot\omega \in \Omega_\Linfs^k(\Ucover{U_2})$, and it is clear that $j_1^*\omega_1 - j_2^*\omega_2 = \omega$.
        
        The maps in the short exact sequence commute with the coboundary operator.
        By standard homological algebra, they induce a long exact sequence in cohomology.
    \end{proof}
    
    We are now ready to prove that $\mathcal{I}_X^k:H_\Omega^k(X) \to H_C^k(X)$ is an isomorphism for every $k \in \N$.
    
    \begin{proof}[Proof of \Cref{thm:bounded_de_rham}]
        Consider the following family of open subsets of $X$: $\mathcal{F}\ =\ \{U \subseteq X \text{ open}\ \mid\ \mathcal{I}^k_U \text{ is an isomorphism for every } k \in \N\}$.
        The goal is to show that $X \in \mathcal{F}$.
        As shown in \cite[Lemma V.9.3]{Bredon1993}, in order to conclude it is enough to prove that $\mathcal{F}$ satisfies the following properties:
        \begin{enumerate}[label=(\arabic*)]
            \item \label{item:F1} If $U \subseteq X$ is an open subset diffeomorphic to an open convex subset of $\R^n$, then $U \in \mathcal{F}$;
            \item \label{item:F2} If $U_1, U_2 \in \mathcal{F}$ and $U_1\cap U_2\in\mathcal{F}$, then $U_1\cup U_2 \in \mathcal{F}$;
            \item \label{item:F3} If $\{U_i\}_i \subseteq \mathcal{F}$ is a family of disjoint open subsets, then $\bigcup_i U_i \in \mathcal{F}$.
        \end{enumerate}
        Property \ref{item:F3} holds because there are canonical isomorphisms
        \begin{align*}
            H_\Omega^k\left(\bigcup_iU_i\right) \cong \prod_iH_\Omega^k(U_i), & & H_C^k\left(\bigcup_iU_i\right) \cong \prod_iH_C^k(U_i).
        \end{align*}
        Let $U \subseteq X$ be an open subset diffeomorphic to an open convex subset of $\R^n$.
        The Poincaré lemma for $H_\Omega$ (\Cref{prop:poincare}) and the analogous result for $H_C$ imply that $\mathcal{I}_U^k$ is (trivially) an isomorphism for every $k \ge 1$.
        On the other hand, $H_\Omega^0(U)$ and $H_C^0(U)$ are both canonically isomorphic to the vector space of locally constant bounded functions $f:\Ucover{U} \to \R$.
        Therefore, property \ref{item:F1} holds.
        
        Let $U_1, U_2 \subseteq X$ be open subsets and denote by $V=U_1\cap U_2$ their intersection and by $U = U_1\cup U_2$ their union.
        The short exact sequences in \Cref{prop:mayer_vietoris} give a long exact sequence in cohomology that, together with the corresponding sequence for $H_C$, fits in the following diagram:
        \[\begin{tikzcd}[column sep=small]
            \cdots \ar[r] & H_\Omega^k(U) \ar[r]\ar[d,"\mathcal{I}_U^k"] & H_\Omega^k(U_1)\oplus H_\Omega^k(U_2)\ar[d,"\mathcal{I}_{U_1}^k\oplus\mathcal{I}_{U_2}^k"] \ar[r] & H_\Omega^k(V) \ar[d,"\mathcal{I}_V^k"] \ar[r] & H_\Omega^{k+1}(U)\ar[d,"\mathcal{I}_U^{k+1}"] \ar[r] & \cdots \\
            \cdots \ar[r] & H_C^k(U) \ar[r] & H_C^k(U_1)\oplus H_C^k(U_2) \ar[r] & H_C^k(V) \ar[r] & H_C^{k+1}(U) \ar[r] & \cdots.
        \end{tikzcd}\]
        Suppose that $U_1, U_2,V \in \mathcal{F}$.
        If we show that the diagram commutes, the five lemma would imply that $U \in \mathcal{F}$, proving property \ref{item:F2} and concluding the proof.
        The leftmost and central square of the diagram above commute because the horizontal maps are induced by inclusions.
        We now consider the square on the right.
        
        As in \Cref{prop:mayer_vietoris}, denote by $i_1, i_2, j_1$ and $j_2$ the various inclusions.
        Let $[\omega] \in H_\Omega^k(V)$.
        Its image in $H_\Omega^{k+1}(U)$ has a representative $\mu$ given by the following procedure: write $\omega = j_1^*\omega_1 - j_2^*\omega_2$ for some $\omega_1 \in \Omega_\Linfs^k(\Ucover{U_1})$ and $\omega_2 \in \Omega_\Linfs^k(\Ucover{U_2})$; then take $\mu \in \Omega_\Linfs^{k+1}(\Ucover{U})$ such that $i_1^*\mu = d\omega_1$ and $i_2^*\mu = d\omega_2$.
        Now consider $\mathcal{I}_V\omega \in C_\Linfs^k(\Ucover{V})$.
        In general $\mathcal{I}_V\omega \neq j_1^*\mathcal{I}_{U_1}\omega_1 - j_2^*\mathcal{I}_{U_2}\omega_2$, but the equality holds on smooth simplices.
        Take a cochain $\alpha \in C_\Linfs^k(\Ucover{U_2})$ such that $\mathcal{I}_V\omega = j_1^*\mathcal{I}_{U_1}\omega_1 - j_2^*(\alpha + \mathcal{I}_{U_2}\omega_2)$, constructed as follows.
        The values attained by $\alpha$ on the simplices contained in $\Ucover{V}$ is uniquely determined (in particular, it must be $0$ for smooth simplices), and we declare that $\alpha$ assigns value $0$ to simplices not contained in $\Ucover{V}$.
        In particular, this choice ensures that $\alpha$ is bounded on orbits.
        The class $[\mathcal{I}_V\omega]$ is sent, via the bottom map, to a class $[\beta] \in H_C^{k+1}(U)$ with $i_1^*\beta=\Cbd\mathcal{I}_{U_1}\omega_1$ and $i_2^*\beta=\Cbd\mathcal{I}_{U_2}\omega_2 + \Cbd\alpha$.
        The cocycles $\beta$ and $\mathcal{I}_U\mu$ assume the same values on smooth simplices contained in either $\Ucover{U_1}$ or $\Ucover{U_2}$.
        Hence, the commutativity of the square follows from \Cref{lemma:smooth_and_small}.
    \end{proof}
    
    \bibliography{bibliography}

\end{document}